\DeclareFontFamily{OT1}{rsfs}{}
\DeclareFontShape{OT1}{rsfs}{n}{it}{<-> rsfs10}{}
\DeclareMathAlphabet{\mathscr}{OT1}{rsfs}{n}{it}
\newtheorem{theorem}{Theorem}[section]
\newtheorem{lemma}[theorem]{Lemma}
\newtheorem{corol}[theorem]{Corollary}
\newtheorem{prop}[theorem]{Proposition}
\newtheorem{defin}[theorem]{Definition}
\newtheorem{remark}[theorem]{Remark}
\newtheorem{example}[theorem]{Example}
\newcommand{\nc}{\newcommand}
\nc{\rnc}{\renewcommand}
\nc{\bb}[1]{{\mathbb #1}}
\nc{\bbA}{\bb{A}}\nc{\bbB}{\bb{B}}\nc{\bbC}{\bb{C}}\nc{\bbD}{\bb{D}}
\nc{\bbE}{\bb{E}}\nc{\bbF}{\bb{F}}\nc{\bbG}{\bb{G}}\nc{\bbH}{\bb{H}}
\nc{\bbI}{\bb{I}}\nc{\bbJ}{\bb{J}}\nc{\bbK}{\bb{K}}\nc{\bbL}{\bb{L}}
\nc{\bbM}{\bb{M}}\nc{\bbN}{\bb{N}}\nc{\bbO}{\bb{O}}\nc{\bbP}{\bb{P}}
\nc{\bbQ}{\bb{Q}}\nc{\bbR}{\bb{R}}\nc{\bbS}{\bb{S}}\nc{\bbT}{\bb{T}}
\nc{\bbU}{\bb{U}}\nc{\bbV}{\bb{V}}\nc{\bbW}{\bb{W}}\nc{\bbX}{\bb{X}}
\nc{\bbY}{\bb{Y}}\nc{\bbZ}{\bb{Z}}
\nc{\mbf}[1]{{\mathbf #1}}
\nc{\bfA}{\mbf{A}}\nc{\bfB}{\mbf{B}}\nc{\bfC}{\mbf{C}}\nc{\bfD}{\mbf{D}}
\nc{\bfE}{\mbf{E}}\nc{\bfF}{\mbf{F}}\nc{\bfG}{\mbf{G}}\nc{\bfH}{\mbf{H}}
\nc{\bfI}{\mbf{I}}\nc{\bfJ}{\mbf{J}}\nc{\bfK}{\mbf{K}}\nc{\bfL}{\mbf{L}}
\nc{\bfM}{\mbf{M}}\nc{\bfN}{\mbf{N}}\nc{\bfO}{\mbf{O}}\nc{\bfP}{\mbf{P}}
\nc{\bfQ}{\mbf{Q}}\nc{\bfR}{\mbf{R}}\nc{\bfS}{\mbf{S}}\nc{\bfT}{\mbf{T}}
\nc{\bfU}{\mbf{U}}\nc{\bfV}{\mbf{V}}\nc{\bfW}{\mbf{W}}\nc{\bfX}{\mbf{X}}
\nc{\bfY}{\mbf{Y}}\nc{\bfZ}{\mbf{Z}}
\nc{\bfa}{\mbf{a}}\nc{\bfb}{\mbf{b}}\nc{\bfc}{\mbf{c}}\nc{\bfd}{\mbf{d}}
\nc{\bfe}{\mbf{e}}\nc{\bff}{\mbf{f}}\nc{\bfg}{\mbf{g}}\nc{\bfh}{\mbf{h}}
\nc{\bfi}{\mbf{i}}\nc{\bfj}{\mbf{j}}\nc{\bfk}{\mbf{k}}\nc{\bfl}{\mbf{l}}
\nc{\bfm}{\mbf{m}}\nc{\bfn}{\mbf{n}}\nc{\bfo}{\mbf{o}}\nc{\bfp}{\mbf{p}}
\nc{\bfq}{\mbf{q}}\nc{\bfr}{\mbf{r}}\nc{\bfs}{\mbf{s}}\nc{\bft}{\mbf{t}}
\nc{\bfu}{\mbf{u}}\nc{\bfv}{\mbf{v}}\nc{\bfw}{\mbf{w}}\nc{\bfx}{\mbf{x}}
\nc{\bfy}{\mbf{y}}\nc{\bfz}{\mbf{z}}
\nc{\mcal}[1]{{\mathcal #1}}
\nc{\calA}{\mcal{A}}\nc{\calB}{\mcal{B}}\nc{\calC}{\mcal{C}}\nc{\calD}{\mcal{D}}
\nc{\calE}{\mcal{E}} \nc{\calF}{\mcal{F}}\nc{\calG}{\mcal{G}}\nc{\calH}{\mcal{H}}
\nc{\calI}{\mcal{I}}\nc{\calJ}{\mcal{J}}\nc{\calK}{\mcal{K}}\nc{\calL}{\mcal{L}}
\nc{\calM}{\mcal{M}}\nc{\calN}{\mcal{N}}\nc{\calO}{\mcal{O}}\nc{\calP}{\mcal{P}}
\nc{\calQ}{\mcal{Q}}\nc{\calR}{\mcal{R}}\nc{\calS}{\mcal{S}}\nc{\calT}{\mcal{T}}
\nc{\calU}{\mcal{U}}\nc{\calV}{\mcal{V}}\nc{\calW}{\mcal{W}}\nc{\calX}{\mcal{X}}
\nc{\calY}{\mcal{Y}}\nc{\calZ}{\mcal{Z}}
\nc{\fA}{\frak{A}}\nc{\fB}{\frak{B}}\nc{\fC}{\frak{C}} \nc{\fD}{\frak{D}}
\nc{\fE}{\frak{E}}\nc{\fF}{\frak{F}}\nc{\fG}{\frak{G}}\nc{\fH}{\frak{H}}
\nc{\fI}{\frak{I}}\nc{\fJ}{\frak{J}}\nc{\fK}{\frak{K}}\nc{\fL}{\frak{L}}
\nc{\fM}{\frak{M}}\nc{\fN}{\frak{N}}\nc{\fO}{\frak{O}}\nc{\fP}{\frak{P}}
\nc{\fQ}{\frak{Q}}\nc{\fR}{\frak{R}}\nc{\fS}{\frak{S}}\nc{\fT}{\frak{T}}
\nc{\fU}{\frak{U}}\nc{\fV}{\frak{V}}\nc{\fW}{\frak{W}}\nc{\fX}{\frak{X}}
\nc{\fY}{\frak{Y}}\nc{\fZ}{\frak{Z}}
\nc{\fa}{\frak{a}}\nc{\fb}{\frak{b}}\nc{\fc}{\frak{c}} \nc{\fd}{\frak{d}}
\nc{\fe}{\frak{e}}\nc{\fFf}{\frak{f}}\nc{\fg}{\frak{g}}\nc{\fh}{\frak{h}}
\nc{\fri}{\frak{i}}\nc{\fj}{\frak{j}}\nc{\fk}{\frak{k}}\nc{\fl}{\frak{l}}
\nc{\fm}{\frak{m}}\nc{\fn}{\frak{n}}\nc{\fo}{\frak{o}}\nc{\fp}{\frak{p}}
\nc{\fq}{\frak{q}}\nc{\fr}{\frak{r}}\nc{\fs}{\frak{s}}\nc{\ft}{\frak{t}}
\nc{\fu}{\frak{u}}\nc{\fv}{\frak{v}}\nc{\fw}{\frak{w}}\nc{\fx}{\frak{x}}
\nc{\fy}{\frak{y}}\nc{\fz}{\frak{z}}
\newcommand{\C}{{\mathbb C}}
\newcommand{\bZ}{{\mathbb Z}}
\newcommand{\cF}{{\mathcal F}}
\newcommand{\Gr}{\mathrm{Gr}}
\newcommand{\csm}{c_{\mathrm{SM}}}
\newcommand{\ssm}{s_{\mathrm{M}}}
\newcommand{\one}{1\hskip-3.5pt1}
\newcommand{\h}{\mathrm{ht}}
\DeclareMathOperator{\Pic}{Pic}
\DeclareMathOperator{\Frac}{Frac}
\DeclareMathOperator{\Red}{Red}
\DeclareMathOperator{\STab}{STab}
\DeclareMathOperator{\Stab}{Stab}
\DeclareMathOperator{\loc}{loc}
\DeclareMathOperator{\mult}{mult}
\begin{document}
\title{Hook formulae from Segre--MacPherson classes}

\author{Leonardo C.~Mihalcea}
\address{
Department of Mathematics, 
Virginia Tech University, 
Blacksburg, VA 24061
USA
}
\email{lmihalce@vt.edu}

\author{Hiroshi Naruse}
\address{Graduate School of Education, University of Yamanashi, 
Kofu, 400-8510, Japan}
\email{hnaruse@yamanashi.ac.jp}

\author{Changjian Su}
\address{Department of Mathematics, University of Toronto, Toronto, ON M5S 2E4, Canada}
\email{csu@math.toronto.edu}

\subjclass[2020]{Primary 05A19, 14M15; Secondary 05E14, 14N15}
\keywords{hook formula, Segre--MacPherson classes, Schubert varieties, equivariant multiplicity}

\date{\today}

\begin{abstract}
Nakada's colored hook formula is a vast generalization of many important formulae in combinatorics, such as the classical hook length formula and the Peterson's formula for the number of reduced expressions of minuscule Weyl group elements.~In this paper, we utilize cohomological properties of Segre--MacPherson classes of Schubert cells and varieties to prove a generalization of a cohomological version of Nakada's formula, in terms of smoothness properties of Schubert varieties. A key ingredient in the proof is the study of a decorated version of the Bruhat graph. Summing over weighted paths of this graph give the terms in the generalized Nakada's formula, and also provide algorithms to calculate structure constants of multiplications of Segre--MacPherson classes of Schubert cells. For simply laced Weyl groups, we also show the equality of `skew' and `straight' Nakada's formulae. This utilizes a criterion for smoothness in terms of excited diagrams of heaps of minuscule elements, which might be of independent interest.
\end{abstract}

\maketitle

\setcounter{tocdepth}{1}
\tableofcontents

\section{Introduction} The goal of this paper is to reprove and generalize a version of 
Nakada's colored hook formula, using localization properties of the Segre--MacPherson 
classes of Schubert varieties. We recall next the version of Nakada's formula utilized in this 
paper.

Let $G$ be a complex semisimple Lie group and fix opposite Borel groups $B,B^-$, giving  
$B \cap B^- = T$, a maximal torus in $G$. Let $R^+$ be the set of positive roots in $B$, and let $W:=N_G(T)/T$ be
the Weyl group, endowed with the Bruhat order $<$, and its length function $\ell: W \to \mathbb{N}$. For a Weyl 
group element $w \in W$, denote by $S(w) = \{ \beta \in R^+: s_\beta w < w \}$. 

Let $w\in W$ be a $\pi$-minuscule Weyl group element for an integral dominant weight $\pi$, in the sense of Peterson;
see \cite{proctor:bruhat,carrell:vector,proctor:minuscule,stembridge:fullycomm,stembridge2001minuscule,graham.kreiman:cominuscule_pt} and \S \ref{sec:minuscule} below. Nakada's colored hook formula \cite{nakada:colored} is the identity:
\begin{equation}\label{E:Nakada-intro}
\sum \frac{1}{\beta_1}\cdot \frac{1}{\beta_1+\beta_2}\cdot \ldots \cdot\frac{1}{\beta_1+\beta_2+\cdots +\beta_r}
=\prod_{\beta\in S(w)}\left(1+\frac{1}{\beta}\right) \/.
\end{equation}
Here the sum is over $r \ge 0$ and
oriented paths $x_r\overset{\beta_r}{\to} x_{r-1}\to \ldots \to x_1\overset{\beta_1}{\to} x_0=w$
in the Bruhat graph of the flag manifold $G/P$, where $P$ is the parabolic group with Weyl group $W_P = \Stab_W(\pi)$,
and the notation $x \overset{\beta}{\to} y$ means that $x,y $ are minimal length representatives such that 
$y W_P= s_\beta x W_P >xW_P$. See \S \ref{sec:hook} below for details. 

Nakada's formula is a vast generalization of several remarkable combinatorial formulae. One is the classical 
Frame--Robinson--Thrall hook formula calculating the dimension of the 
irreducible representation of the symmetric group $S_n$ indexed by the partition $\lambda$, or, equivalently, 
the number of standard tableaux of shape $\lambda$:
\begin{equation*}\label{equ:hook}
\chi_\lambda(1)=\#\STab(\lambda)= \frac{n!}{\prod_{\square\in \lambda}h_\square} \/.
\end{equation*}
Here $h_\square$ is the hook length of a cell $\square$ in the Young diagram of $\lambda$. 
Another special case is
the Peterson formula counting the number $\#\Red(w)$ of reduced decompositions of a 
$\pi$-minuscule Weyl group element $w$:
\begin{equation}\label{E:introredw} \#\Red(w)=\frac{\ell(w)!}{\prod_{\beta\in S(w)}\h(\beta)} \/. \end{equation}
We refer to \cite{nakada:colored} or to \S \ref{sec:col-and-conseq} below for more details about how to obtain these specializations from Nakada's formula. 

Let $W^P$ be the set of minimal representatives for the quotient $W/W_P$. 
For each $w \in W^P$, denote by 
$Y(v):=\overline{B^-w P/P} \subset G/P$ the corresponding Schubert variety.  
Our main goal is to prove an identity generalizing \Cref{E:Nakada-intro} in three directions:
\begin{enumerate}
\item We remove the hypothesis that $w \in W$ is (Peterson) $\pi$-minuscule.
\item We consider a `skew-version', for paths $v \le x_r \to \ldots \to x_0 = w$ where $v$ and $w$ are fixed, and such that 
the Schubert variety $Y(v)$ is smooth at $w$. Nakada's formula is obtained from specializing $v=id$, 
since $Y(id) = G/P$ is smooth at each $w$.
\item Instead of fixing a single integral weight $\pi$, we consider an {\bf admissible weight function}
$\Lambda: [v,w]^P \to {X^*(T)_P}$ from the Bruhat interval $[v,w]^P$ to the set of integral weights which stabilize $P$.
\end{enumerate}
The admissible functions are required to satisfy certain hypotheses spelled out in \S \ref{sec:LambdaBruhat}. 
An admissible function $\Lambda$ leads to a 
decorated version of the usual Bruhat graph which we call the {\bf $\Lambda$-Bruhat graph} of $G/P$.
This graph was utilized in \cite{mihalcea:eqqgr,mihalcea:eqqhom} as part of an algorithm calculating the Schubert structure constants of the equivariant quantum cohomology ring of $G/P$. 
Our most general result is the following (cf.~\Cref{thm:genNak}).
\begin{theorem}\label{thm:intro-main} Let $v\leq  w\in W^P$, and fix an admissible 
function $\Lambda:[v,w]^P\to X^{*}(T)_P$ with the associated 
$\Lambda$-Bruhat graph $\Gamma$. Set $S(w/v):=\{\beta\in R^+\mid v\le s_\beta w<w\}$.Then: 
\begin{center}
$Y(v)\subset G/P$ is smooth at $w\in G/P$  if and only if
\begin{equation}\label{E:intro-main}
\sum\frac{m_\Lambda(x_r,x_{r-1})}{\mathcal{W}_\Lambda(x_r)}\cdot \frac{m_\Lambda(x_{r-1},x_{r-2})}{\mathcal{W}_\Lambda(x_{r-1})}\cdot \ldots \cdot \frac{m_\Lambda(x_{1},x_{0})}{\mathcal{W}_\Lambda(x_{1})}
\;=\;
\prod_{\beta \in S(w/v)} \left(1+\frac{1}{\beta}\right),\end{equation}
\end{center}
where the sum is over integers $r \ge 0$, and over all directed paths 
$v\leq x_r \overset{}{\to} x_{r-1} \overset{}{\to} \ldots \overset{}{\to} x_0 = w$ in $\Gamma$.
Here $m_\Lambda(x,y) \in \bZ$ denotes the multiplicity of the edge $x \to y$, and $\mathcal{W}_\Lambda(x)
\in X^{*}(T)_P$
denotes the $\Lambda$-weight of $x$; see \Cref{def:ref_di} below. 
\end{theorem}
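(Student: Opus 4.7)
I would interpret both sides of \eqref{E:intro-main} as the same equivariant localization at the fixed point $wP \in G/P$, and reduce the theorem to a Kumar-type smoothness criterion phrased via Segre--MacPherson classes. The left hand side should emerge unconditionally as a path-sum for this localization, while the right hand side should match it precisely when $Y(v)$ is smooth at $wP$.

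The first step is to realize the left hand side as the localization at $wP$ of a Segre--MacPherson-type class associated to the skew interval $[v,w]^P$. The mechanism is the algorithm recalled in \S\ref{sec:LambdaBruhat} (originating in \cite{mihalcea:eqqgr,mihalcea:eqqhom}), which expands such localizations as weighted sums over oriented paths in the $\Lambda$-Bruhat graph $\Gamma$, each edge $x\to y$ contributing $m_\Lambda(x,y)/\mathcal{W}_\Lambda(x)$. Careful normalization and restriction to paths bounded below by $v$ and terminating at $w$ should make the left hand side of \eqref{E:intro-main} coincide with this localization, and the independence of the total sum under admissible choices of $\Lambda$ should follow from the axioms for admissibility.

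Next, I would identify the right hand side with the same localization precisely when smoothness holds. When $Y(v)$ is smooth at $wP$, the characters of $T_{wP}Y(v)$ are indexed (up to sign) by $S(w/v)$, and at a smooth $T$-fixed point the Segre--MacPherson class localizes to $\prod_\beta (1 + 1/\beta)$ over tangent weights, yielding the forward implication. The converse is a Segre-class analogue of Kumar's criterion that $[Y(v)]|_{wP} = \prod_{\beta \in S(w/v)} \beta$ characterizes smoothness at $wP$: the product shape of the Segre--MacPherson localization should force $|S(w/v)| = \dim T_{wP}Y(v)$, hence smoothness. The chief obstacle is precisely this converse, where one must rule out accidental cancellations in the path sum that could mimic the product shape in singular cases; the rigidity provided by admissibility of $\Lambda$ should be essential here, likely via a degree comparison of the numerator of the path sum against $|S(w/v)|$, together with the unimodality of the contributions coming from edges outgoing from $w$.
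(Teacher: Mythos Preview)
Your overall architecture is correct and matches the paper: interpret the path sum as a localization quotient of Segre--MacPherson classes (this is \Cref{thm:LocCSM2}), and then apply a smoothness criterion in terms of that localization (this is \Cref{thm:smoothP}). However, two of your identifications are wrong and would derail the argument as written.

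First, $S(w/v)$ is \emph{not} the set of tangent weights of $Y(v)$ at $wP$. The tangent space $T_{wP}Y(v)$ has dimension $\dim G/P - \ell(v)$, whereas $|S(w/v)|=\ell(w)-\ell(v)$ when $Y(v)$ is smooth at $wP$. The set $S(w/v)$ is the weight set of the tangent space of the \emph{Richardson variety} $R_w^v = X(w)\cap Y(v)$ at $wP$. Relatedly, the localization of $\ssm(Y)$ at a smooth point is $e^T(N)/c^T(N)$ (a product over \emph{normal} weights), not $\prod_\beta(1+1/\beta)$ over tangent weights. The paper fixes both issues at once by showing that the path sum equals the ratio $\ssm(Y(v))|_w/\ssm(Y(w))|_w$, and that this ratio is precisely the equivariant multiplicity $e_{w,G/P}(\csm(R_w^v))$ (\Cref{prop:eqmult}). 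When $R_w^v$ is smooth at $w$, this multiplicity is $\prod_{\beta\in S(w/v)}(1+1/\beta)$.

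Second, your mechanism for the converse is off. Admissibility of $\Lambda$ plays no role beyond making the path sum well-defined (it guarantees $\mathcal{W}_\Lambda(x)\neq 0$); there is no ``rigidity'' or ``unimodality'' argument. The paper's converse is a clean leading-term extraction: one has $\csm(Y(v))=[Y(v)]+\text{(lower-dimensional terms)}$, so the lowest-degree part of $\csm(Y(v))|_w$ is $[Y(v)]|_w$. If the ratio equals $\prod_{\beta\in S(w/v)}(1+1/\beta)$, comparing lowest-degree terms forces $[Y(v)]|_w=\prod_{\beta:\,v\nleq s_\beta w}\beta$, which is exactly Kumar's criterion (\Cref{thm:kumar}). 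Equivalently, via Brion's equivariant multiplicity criterion (\Cref{thm:def-eq-m}(c)), the leading term of $e_{w,G/P}(\csm(R_w^v))$ is $e_{w,G/P}([R_w^v])$, and the product hypothesis forces this to be $\prod_{\beta\in S(w/v)}1/\beta$.
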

If $w$ is $\pi$-minuscule for a dominant integral weight $\pi$, and if the admissible function
is constant $\Lambda \equiv \pi$ on the interval $[v,w]^P$,
then each multiplicity $m_\Lambda(x_{i},x_{i-1})=1$, and one {obtains an identity
generalizing Nakada's} identity \eqref{E:Nakada-intro}; see \Cref{col:Nak} below. The proof of this ultimately requires a 
good understanding of the (paths in the) interval $[v,w]^P$ for $\pi$-minuscule elements $v,w$, in analogy to 
the intervals in the Young lattice; for instance, the intervals in the weak and strong Bruhat order
coincide. We refer to \S \ref{sec:minuscule-intervals} and \S \ref{sec:genhook} 
for more details. {We encourage the reader to jump to \S \ref{sec:examples}, where we illustrate this theorem in few examples.}

The equality \eqref{E:intro-main} has a remarkable geometric interpretation, observed in special cases in  
\cite{mihalcea:eqqhom,naruse2014schubert,naruse2019skew}.  Let $\csm(Y(v)) \in H^*_T(G/P)$ be the equivariant
Chern--Schwartz--MacPherson (CSM) class of the Schubert variety $Y(v)$. This class is defined using MacPherson's 
construction of characteristic classes of singular varieties \cite{macpherson:chern}, generalized to the equivariant case by 
Ohmoto \cite{ohmoto:eqcsm}; see \S \ref{sec:eqCSM}. Denote by 
\[ \ssm(Y(v)) = \frac{\csm(Y(v))}{c^T(T(G/P))} \in \widehat{H}^*_T(G/P) \]
the (equivariant) Segre--MacPherson (SM) class, which is an element in an 
appropriate completion of the equivariant cohomology ring. Let also 
$\ssm(Y(v))|_w \in H^*_T(pt)_{\loc}$ denote the localization of the SM class at the torus fixed point
$w \in G/P$. {(Here $H^*_T(pt)_{\loc}$ is the fraction field of $H^*_T(pt)$.)} Then we show that: 
\begin{equation}\label{intro:SMfrac} \sum\frac{m_\Lambda(x_r,x_{r-1})}{\mathcal{W}_\Lambda(x_r)}\cdot \frac{m_\Lambda(x_{r-1},x_{r-2})}{\mathcal{W}_\Lambda(x_{r-1})}\cdot \ldots \cdot \frac{m_\Lambda(x_{1},x_{0})}{\mathcal{W}_\Lambda(x_{1})}
= 
\frac{\ssm(Y(v))|_w}{\ssm(Y(w))|_w}\/. \end{equation}
This is proved by utilizing a Molev--Sagan type recursion \cite{molev.sagan:Littlewood-Richardson}, based on a Chevalley formula to multiply SM classes; 
cf.~\Cref{lem:recLR} and see also \cite{su:quantum,AMSS:shadows}. Variants of this recursion have been successfully utilized in \cite{knutson.tao:puzzles,
mihalcea:eqqgr,mihalcea:eqqhom,naruse2014schubert,BCMP:QKChev,naruse2019skew}
to study properties of the {\em equivariant} (quantum) cohomology or K-theory of flag manifolds. As a by-product of this study, and in the same spirit as the references
above, we obtain an algorithm for the structure constants of the multiplication of SM classes; see \Cref{cor:SMLR} below. Different algorithms were also obtained in \cite{su:structure}.

Once~\Cref{intro:SMfrac} is proved, \Cref{thm:intro-main} follows from a smoothness criterion of Schubert varieties 
in terms of localization of SM classes. {More precisely, if $R_w^v$ denotes the 
Richardson variety, the fraction 
\begin{equation*}\label{E:intro-eqmult} \frac{\ssm(Y(v))|_w}{\ssm(Y(w))|_w} = 
e_{w,G/P} (\csm(R_w^{v})) \end{equation*}
equals to Brion's equivariant multiplicity \cite{brion:eq-chow}
of the CSM class of $R_w^v$;~cf.~\Cref{prop:eqmult}. 
{From this perspective, the (generalized) Nakada's formula calculates the equivariant multiplicity of a Richardson variety.}    
The smoothness of $Y(v)$ at $w$ ensures that the equivariant multiplicity is a product of factors corresponding to weights of the normal space of $Y(v)$ at $w$, and it leads to the right hand side of
\Cref{E:intro-main}. Our smoothness criterion for $Y(v)$ at $w$ in \Cref{thm:smoothP} generalizes similar results about smoothness of Schubert varieties by Kumar \cite{kumar1996nil} and Brion \cite{brion:eq-chow}, the latter in terms of equivariant multiplicities. Our criterion is also related to the one from \cite{AMSS:motivic} for motivic Chern classes of Schubert varieties, which had consequences in $p$-adic representation theory.} 

Among the consequences of \Cref{thm:intro-main} proved in section \S \ref{sec:col-and-conseq} 
we mention a skew version of the Peterson formula \eqref{E:introredw}(cf.~\Cref{cor:skewPP}).
\begin{corol} Take $v <w\in W$ be $\pi$-minuscule elements 
such that $Y(v)$ is smooth at $w$. Recall that 
$S(w/v):=\{\beta\in R^+\mid v\le s_\beta w<w\}$.~Then:
\begin{equation}\label{equ:petersonproctor-intro}
\#\Red(wv^{-1})=\frac{(\ell(w)-\ell(v))!}{\prod_{\beta\in S(w/v)}\h(\beta)} ~\/.
\end{equation}
\end{corol}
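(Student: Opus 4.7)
The plan is to derive this identity by applying \Cref{thm:intro-main} to the constant admissible function $\Lambda \equiv \pi$ on $[v,w]^P$, extracting the lowest-degree homogeneous component, and then specializing all simple roots to $1$. Since every element of $[v,w]^P$ is $\pi$-minuscule, each edge multiplicity $m_\Lambda(x,y)$ equals $1$ and the $\Lambda$-weight telescopes to $\mathcal{W}_\Lambda(x_i) = \beta_1 + \cdots + \beta_i$ along a path, so the theorem reduces to the skew form of Nakada's identity
\begin{equation*}
\sum_{r \ge 0} \; \sum_{v \le x_r \to \cdots \to x_0 = w} \frac{1}{\beta_1}\cdot\frac{1}{\beta_1+\beta_2}\cdots\frac{1}{\beta_1+\cdots+\beta_r} \;=\; \prod_{\beta \in S(w/v)} \left( 1 + \frac{1}{\beta} \right),
\end{equation*}
viewed as an equality in the fraction field of $\bZ[\alpha_1, \ldots, \alpha_n]$.

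Next, I would match lowest-degree homogeneous components under the grading $\deg \alpha_i = 1$. Smoothness of $Y(v)$ at $w$, combined with the criterion \Cref{thm:smoothP}, forces $|S(w/v)| = \ell(w) - \ell(v) =: N$, so the right-hand side (which expands as $\sum_{T \subseteq S(w/v)} \prod_{\beta \in T} \beta^{-1}$) has minimal-degree part $\prod_{\beta \in S(w/v)} \beta^{-1}$ of degree $-N$. On the left, any path has length $r \le N$, with equality forcing $x_r = v$ and every step to be a cover; these are exactly the saturated chains from $v$ to $w$. Equating degree-$(-N)$ components therefore yields
\begin{equation*}
\sum_{v = x_N \lessdot \cdots \lessdot x_0 = w} \frac{1}{\beta_1(\beta_1+\beta_2)\cdots(\beta_1+\cdots+\beta_N)} \;=\; \prod_{\beta \in S(w/v)} \frac{1}{\beta}.
\end{equation*}

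Finally, I would apply the specialization $\alpha_i \mapsto 1$, which is well-defined since every factor is a sum of positive roots and hence maps to its height. The right-hand side becomes $1/\prod_{\beta \in S(w/v)} \h(\beta)$. For the left, the crucial input (from \S \ref{sec:minuscule-intervals}) is that for $\pi$-minuscule $v \le w$ the strong and (left) weak Bruhat intervals $[v,w]^P$ coincide, so every cover $x_i \lessdot x_{i-1}$ in a saturated chain is given by left multiplication by a simple reflection; consequently every edge label $\beta_k$ is a simple root of height $1$. Each summand therefore reduces to $1/(1 \cdot 2 \cdots N) = 1/N!$, and the number of saturated chains matches $\#\Red(wv^{-1})$ through the standard bijection with reduced decompositions of $wv^{-1}$. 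Rearranging yields \Cref{equ:petersonproctor-intro}. The main obstacle is precisely this last input — identifying the edge labels of saturated chains in $\pi$-minuscule intervals with simple roots — which relies on the strong-equals-weak Bruhat comparison developed in \S \ref{sec:minuscule-intervals}.
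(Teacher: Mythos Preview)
Your proposal is correct and follows essentially the same approach as the paper's own proof: apply the main theorem with the constant admissible function $\Lambda\equiv\pi$ (so all edge multiplicities are $1$ and the weights telescope), extract the lowest-degree homogeneous component to isolate the maximal-length paths, and then specialize $\alpha_i\mapsto 1$, invoking the strong--weak Bruhat comparison from \S\ref{sec:minuscule-intervals} to identify these paths with reduced words of $wv^{-1}$. Your explicit observation that smoothness forces $|S(w/v)|=\ell(w)-\ell(v)$ is a nice touch that the paper leaves implicit.
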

When considered in this generality, this corollary seems to be new. 
However, we prove that if $W$ is simply laced, the `skew' formula
for $v<w$ equals to the `straight' formula applied to $id < wv^{-1}$. This follows from 
\Cref{smooth:equiv_app}, where we show that $Y(v)$ is smooth at $w$ if and only if
$wv^{-1}$ is $\pi'$-minuscule, for some integral dominant weight $\pi'$. Furthermore,
the $\Lambda$-Bruhat graphs for the intervals $[v,w]$ and $[id,wv^{-1}]$, corresponding to the
parabolic groups stabilizing $\pi$, respectively $\pi'$, are isomorphic.  
The proof utilizes the theory of heaps, defined by Stembridge \cite{stembridge2001minuscule},
together with that of excited diagrams of heaps, studied by Ikeda and Naruse 
\cite{ikeda2009excited} and Naruse and Okada \cite{naruse2019skew}. As a by-product of the proof, we obtain a smoothness criterion in terms of (non)existence of excited diagrams of heaps,
which might be of independent interest.

\Cref{thm:intro-main} is the prototype of more general results. For instance,
a similar theorem - with essentially the same proof - may be obtained if one replaces the (cohomological) SM classes
with their K-theoretic versions, the {\em motivic Segre classes} of Schubert varieties \cite{AMSS:motivic,mihalcea2020left}.
Because of the
technical challenges (in geometry and combinatorics) when working in K-theory, this will be studied
in a separate upcoming work. Furthermore, there are versions of
Nakada's colored hook formula \cite{nakada:colored} which hold in the Kac--Moody setting, suggesting that analogues of  
\Cref{thm:intro-main} might exist in that generality as well.  

{\em Acknowledgments.} Part of this work was performed while L. Mihalcea was in residence at 
ICERM, as a participant in the program ``Combinatorial Algebraic Geometry" (Spring $2021$); the author is grateful for support, excellent
working conditions, and a stimulating atmosphere. L. Mihalcea was also supported in part by a Simons Collaboration Grant. H.~Naruse was supported in part by JSPS KAKENHI Grant Number 16H03921.

\section{Preliminaries}
We start by fixing the notation utilized throughout the paper. Let $G$ be a simply connected complex 
Lie group with Borel subgroup $B$ and maximal torus $T\subset B$. Denote by $\mathfrak g= \mathrm{Lie}(G)$ and by $\fh= \mathrm{Lie}(T)$ 
be corresponding Lie algebras. Let $R^+\subset \fh^*:=\fh^*_\bbQ$ denote the positive roots, 
i.e those roots in $B$, and by $\Sigma =\{ \alpha_i: i \in I \}$ the set of simple roots. Let $R:=R^+\sqcup -R^+$. 
We use $\alpha>0$ (resp. $\alpha<0$) to denote $\alpha\in R^+$ (resp. $\alpha\in -R^+$). For any 
root $\alpha\in R$, let $\alpha^\vee \subset \fh$ denote the corresponding coroot. Let 
$\langle \cdot,\cdot \rangle: \frak{h}^{*}\times \frak {h}\to \bbQ$ denote the usual pairing, and let $X^*(T)\subset \fh^*$ be the weight lattice. 
The Weyl group $W = N_G(T)/T$ is generated by simple reflections $s_i= s_{\alpha_i}$ ($i \in I$), and it is equipped with 
the Bruhat order $\leq$; we denote by $w_0$ the longest element. For any $w\in W$, let $\Red(w)$ 
denote the set of all the reduced expressions for $w$. For $v < w \in W$, define 
\begin{equation}\label{equ:wvinversion}
S(w/v):=\{\beta\in R^+\mid v\le s_\beta w<w\}.
\end{equation}
If $v=id$, we denote $S(w/id)$ by $S(w)$.

Let $P (\supseteq B)$ be a parabolic subgroup with simple roots $\Sigma_P\subset \Sigma$ in $P$. This determines
the set $R_P^+ \subset R^+$ of those positive roots spanned by $\Sigma_P$, and the subgroup $W_P \subset W$ generated 
by the simple reflections $s_i$ where $\alpha_i \in \Sigma_P$. Denote by $W^P$ the set of 
minimal length representatives in $W/W_P$. The elements $w\in W^P$ are characterized by the property that $w(R^+_P)\subset R^+$. 
The torus fixed points $(G/P)^T$ are $\{wP\mid w\in W^P\}$. For any $w\in W^P$, let 
$T_w(G/P)$ denote the tangent space at $wP$. For any $w\in W$, let $X(w)^\circ:=BwP/P\subset G/P$ (resp. 
$Y(w)^\circ:=B^-wP/P\subset G/P$) denote the Schubert cell with closure $X(w)$ (resp. $Y(w)$), where 
$B^-$ is the opposite Borel subgroup. In particular, $X(w)^\circ=X(u)^\circ$ when the two cosets $wW_P$ and 
$uW_P$ are equal to each other. The Bruhat order restricts to the (Bruhat) order
on the cosets 
$\{wW_P\mid w\in W^P\}$, and it characterized by $uW_P\leq wW_P$ if and only if 
$uP\in X(w)$. Let $X^{*}(T)_P:=\{\lambda\in X^{*}(T)\mid \langle\lambda, \gamma^\vee\rangle=0 \text{ for all }
\gamma\in R^+_P \}$ be the set of integral weights which vanish on $ (R^+_P)^\vee$. For any 
$\lambda\in X^*(T)_P$, let $\calL_\lambda$ denote the line bundle $G\times^P\bbC_\lambda\in\Pic(G/P)$, which has 
fibre over $1.P$ the $T$-module of weight $\lambda$.

Let $H_T^*(G/P)$ denote the equivariant cohomology of the partial flag variety $G/P$. It has a basis of the Schubert classes:
\[H_T^*(G/P)=\bigoplus_{w\in W^P} H_T^*(pt)[X(w)]=\bigoplus_{w\in W^P} H_T^*(pt)[Y(w)],\]
{where $[X(w)]$ and $[Y(w)]$ denote the Poincar{\'e} dual of the fundamental classes of the Schubert varieties.}
For any $\kappa \in H_T^*(G/P)$ and $w\in W^P$, let $\kappa|_w\in H_T^*(pt)$ denote the restriction of $\kappa$ to the fixed point $wP\in G/P$.
Let $H_T^*(G/P)_{\loc} :=H_T^*(G/P)\otimes_{H_T^*(pt)}\Frac H_T^*(pt)$ be the localized equivariant cohomology of $G/P$, where $\Frac H_T^*(pt)$ denotes the fraction field of $H_T^*(pt)$. 
 By the localization theorem,  $H_T^*(G/P)_{\loc}$ has a basis formed by the classes of fixed points $\{[wP]\mid w\in W^P\}$; 
this is called the fixed point basis.  
 
The Weyl group $W$ acts on $G/P$ by left multiplication. It induces an action of $W$ on $H_T^*(G/P)$, 
which acts on the base ring 
$H_T^*(pt)=\bbZ[\fh]$ by the usual Weyl group action. Let $\varphi_{w_0}$ denote the action induced by 
the longest Weyl group element. Then $\varphi_{w_0}([X(w)])=[Y(w_0w)]$.

\section{Nakada's colored hook formula}\label{sec:hook}
In this section, we review Nakada's colored hook formula from \cite{nakada:colored,nakada:proc}. This formula generalizes results of Proctor 
\cite{proctor:minuscule,proctor:dynkin}
and D. Peterson (see e.g. \cite{carrell:vector})
about the combinatorics of complete $d$-posets, and the number of reduced 
decompositions of certain minuscule Weyl group elements; see also \cite{stanley:reduced}. 
In order to be able to utilize geometric arguments, in this paper we require that the Lie algebra $\fg$ is of finite 
type, although Nakada's formula may be formulated for any Kac--Moody Lie algebra.

\subsection{Nakada's formula and pre-dominant weights}\label{sec:predominant} Recall that the integral weights $\lambda \in \frak{h}^*$ satisfy 
$\langle\lambda,\alpha_i^\vee\rangle\in \bZ$, for each $\alpha_i \in \Sigma$; a weight $\lambda$ is {\em dominant} if in addition $\langle\lambda,\alpha_i^\vee\rangle\geq 0$. Following 
\cite{nakada:colored}, we say that an integral weight $\lambda$ is {\em pre-dominant} if
$\langle \lambda, \beta^\vee\rangle \geq -1$ for all positive roots $\beta \in R^{+}$. For a pre-dominant integral weight $\lambda$, the {\em diagram} $D(\lambda)$ of $\lambda$ is defined by
\begin{equation}\label{diagram}
D(\lambda):=\{\beta\in R^{+} \mid \langle\lambda, \beta^\vee\rangle =-1\}.
\end{equation}
\noindent
If $\lambda$ is a dominant integral weight, it is pre-dominant, but $D(\lambda)=\emptyset$. 

We recall some elementary facts about pre-dominant integral weights from \cite{nakada:colored}. 
\begin{lemma}\cite[Lemma 4.1]{nakada:colored}\label{lem:pre-dom}
Let $\lambda$ be a pre-dominant integral weight.

(1) If $D(\lambda)\neq\emptyset$, then $D(\lambda) \cap \Sigma\neq \emptyset$.

(2) For $\beta\in D(\lambda)$, $s_\beta(\lambda)$ is a pre-dominant integral weight.

(3) In case (2), $D(s_\beta(\lambda))=s_\beta(D(\lambda)\setminus S(s_\beta))$.
\end{lemma}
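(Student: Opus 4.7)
For part (1), I would exploit that every positive coroot $\beta^\vee$ is a non-negative integer combination of simple coroots $\alpha_i^\vee$. Consequently, if $\langle \lambda, \alpha_i^\vee \rangle \geq 0$ for every simple root, the same inequality holds for every positive coroot, forcing $D(\lambda) = \emptyset$. Contrapositively, $D(\lambda) \neq \emptyset$ forces some $\langle \lambda, \alpha_i^\vee \rangle < 0$, and pre-dominance pins this down to $-1$, so $\alpha_i \in D(\lambda) \cap \Sigma$.

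For part (2), since $\langle \lambda, \beta^\vee \rangle = -1$ I have $s_\beta(\lambda) = \lambda + \beta$, and I want to show $\langle s_\beta \lambda, \gamma^\vee \rangle = \langle \lambda, (s_\beta \gamma)^\vee \rangle \geq -1$ for each $\gamma \in R^+$. The cases $s_\beta(\gamma) > 0$ and $\gamma = \beta$ follow at once from pre-dominance of $\lambda$ and from $\langle \lambda, \beta^\vee \rangle = -1$, respectively. The remaining case is $\gamma \in S(s_\beta) \setminus \{\beta\}$; writing $-\delta := s_\beta(\gamma) \in -R^+$, the key input is the rank-$2$ coroot identity
\[
\delta^\vee \;=\; \langle \beta, \gamma^\vee \rangle\, \beta^\vee \;-\; \gamma^\vee,
\]
which I would verify using $s_\beta$-invariance of the Killing form (so that $\delta$ and $\gamma$ share a length) together with a direct expansion. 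Substituting $\langle \lambda, \beta^\vee \rangle = -1$ then yields
\[
\langle s_\beta \lambda, \gamma^\vee \rangle \;=\; \langle \beta, \gamma^\vee \rangle + \langle \lambda, \gamma^\vee \rangle;
\]
since $\gamma \in S(s_\beta) \setminus \{\beta\}$ forces $\langle \beta, \gamma^\vee \rangle \geq 1$, combining with pre-dominance of $\lambda$ gives $\langle s_\beta \lambda, \gamma^\vee \rangle \geq 0$, as required.

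For part (3), the forward inclusion $s_\beta(D(\lambda) \setminus S(s_\beta)) \subseteq D(s_\beta \lambda)$ follows by direct substitution into $\langle s_\beta \lambda, \gamma^\vee \rangle = \langle \lambda, (s_\beta \gamma)^\vee \rangle$, using that $\delta \notin S(s_\beta)$ ensures $s_\beta(\delta) \in R^+$. For the reverse inclusion, I take $\gamma \in D(s_\beta \lambda)$ and split on the sign of $s_\beta(\gamma)$: if $s_\beta(\gamma) \in R^+$, then $\delta := s_\beta(\gamma)$ lies in $D(\lambda) \setminus S(s_\beta)$ and $\gamma = s_\beta(\delta)$; the case $\gamma \in S(s_\beta) \setminus \{\beta\}$ is excluded by the very same computation from part (2), which gave $\langle s_\beta \lambda, \gamma^\vee \rangle \geq 0 > -1$; and $\gamma = \beta$ is excluded since $\langle s_\beta \lambda, \beta^\vee \rangle = +1$.

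The main obstacle will be the rank-$2$ coroot identity $\delta^\vee = \langle \beta, \gamma^\vee \rangle \beta^\vee - \gamma^\vee$ and the accompanying inequality $\langle \beta, \gamma^\vee \rangle \geq 1$, on which both parts (2) and (3) depend. These are elementary but require care with root lengths, and in the worst case a small case analysis over the rank-$2$ subsystem types $A_2$, $B_2$, $G_2$ (the $A_1 \times A_1$ case is trivial as $s_\beta$ fixes the orthogonal root). Every other step is then a formal substitution.
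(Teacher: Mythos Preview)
Your argument is correct in all three parts. Note, however, that the paper does not supply its own proof of this lemma: it is quoted verbatim from \cite[Lemma 4.1]{nakada:colored}, so there is no in-paper argument to compare against.

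One small simplification: you do not actually need the coroot identity $\delta^\vee = \langle \beta, \gamma^\vee \rangle \beta^\vee - \gamma^\vee$ or any rank-$2$ case analysis. Since you already observed $s_\beta(\lambda) = \lambda + \beta$, the formula $\langle s_\beta \lambda, \gamma^\vee \rangle = \langle \lambda, \gamma^\vee \rangle + \langle \beta, \gamma^\vee \rangle$ is immediate by pairing. And the inequality $\langle \beta, \gamma^\vee \rangle \geq 1$ for $\gamma \in S(s_\beta) \setminus \{\beta\}$ follows without cases: if $\langle \gamma, \beta^\vee \rangle \leq 0$ then $s_\beta(\gamma) = \gamma - \langle \gamma, \beta^\vee \rangle \beta$ would be a positive root (a positive root plus a nonnegative multiple of $\beta$), contradicting $\gamma \in S(s_\beta)$; hence $(\beta,\gamma) > 0$, so $\langle \beta, \gamma^\vee \rangle$ is a positive integer. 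The identity $(s_\beta \gamma)^\vee = s_\beta(\gamma^\vee)$, which underlies your coroot formula, is itself a one-line consequence of $s_\beta$ being an isometry, so even if you keep that route there is no need for a type-by-type check.
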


For integral weights $\mu, \nu \in \frak{h}^*$ and $\beta \in R^{+}$, define
\[\mu \overset{\beta}{\to} \nu \Longleftrightarrow \langle \mu, \beta^\vee\rangle=-1 \textit{ and } \nu=s_\beta(\mu) \/.\] In particular, if
$\mu \overset{\beta}{\to} \nu$ then $\nu = \mu + \beta$.

A $\lambda$-path of length $r$ is a sequence $(\beta_1,\beta_2,\ldots, \beta_r)$ where 
$$
\lambda=\lambda_0 \overset{\beta_1}{\to} \lambda_1 
\overset{\beta_2}{\to} \cdots
\overset{\beta_r}{\to} \lambda_r \/.
$$
We denote by ${\rm Path}(\lambda)$ the set of all $\lambda$-paths.
By \Cref{lem:pre-dom}(2), if $\lambda$ is pre-dominant, all the weights $\lambda_i$ in a $\lambda$-path are
pre-dominant. At each step in a $\lambda$-path, $|D(\lambda_i)|$ strictly decreases, therefore the 
length of a $\lambda$-path for a pre-dominant weight must be at most the size of $D(\lambda)$. 
Parts (1) and (3) of \Cref{lem:pre-dom} imply that a $\lambda$-path $(\beta_1, \ldots, \beta_d)$ of maximal length can only contain simple roots $\beta_i \in \Delta$, and that $d = |D(\lambda)|$. For a pre-dominant integral weight $\lambda$, 
we denote by ${\rm MPath}(\lambda)\subset {\rm Path}(\lambda)$ the subset of longest $\lambda$-paths.

Now we can state Nakada's colored hook formula.
\begin{theorem}[Nakada's colored hook formula] \cite[Theorem 7.1]{nakada:colored}\label{thm:Nak}

Let $\lambda$ be a pre-dominant integral weight. Then 
$$
\sum \frac{1}{\beta_1}\cdot \frac{1}{\beta_1+\beta_2}\cdot \ldots \cdot\frac{1}{\beta_1+\beta_2+\cdots +\beta_r}
=\prod_{\beta\in D(\lambda)}\left(1+\frac{1}{\beta}\right) \/, 
$$ where the sum is over all $r \ge 0$ and $\lambda$-paths $(\beta_1,\beta_2,\ldots,\beta_r)$.
\end{theorem}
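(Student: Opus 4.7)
I would prove this by induction on $d := |D(\lambda)|$. The base case $d = 0$ is immediate: the unique $\lambda$-path is the empty one and the right-hand side is an empty product, both equal to $1$. Write $L(\lambda)$ and $R(\lambda)$ for the two sides. For the inductive step, \Cref{lem:pre-dom}(1) supplies a simple root $\alpha \in \Sigma \cap D(\lambda)$; set $\lambda' := s_\alpha \lambda = \lambda + \alpha$, which by \Cref{lem:pre-dom}(2)--(3) is pre-dominant with $|D(\lambda')| = d-1$, so $L(\lambda') = R(\lambda')$ by induction. I view both sides as rational functions on $\fh^*_\bbQ$ in the basis of simple roots and match them by residue analysis.

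Factor $R(\lambda) = (1 + 1/\alpha)\prod_{\beta \in D(\lambda) \setminus \{\alpha\}}(1 + 1/\beta)$; the residue at $\alpha = 0$ is $\prod_{\beta \in D(\lambda) \setminus \{\alpha\}}(1 + 1/\beta)$. On the left-hand side, only $\lambda$-paths with $\beta_1 = \alpha$ contribute a pole at $\alpha = 0$, since a simple root cannot be written as a sum of two or more positive roots. Such paths correspond bijectively to $\lambda'$-paths by deleting the initial step; multiplying by $\alpha$ and setting $\alpha = 0$ in the remaining denominators $\alpha + \beta_2 + \cdots + \beta_i$ recovers $L(\lambda')$. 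By induction, $L(\lambda') = R(\lambda') = \prod_{\beta \in D(\lambda)\setminus\{\alpha\}}(1 + 1/s_\alpha(\beta))$, which agrees with the residue of $R$ modulo $\alpha$ because $s_\alpha(\beta) = \beta - \langle \beta, \alpha^\vee\rangle \alpha \equiv \beta \pmod \alpha$. Applying the same analysis at every simple root of $D(\lambda)$ handles each simple-root hyperplane.

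The main obstacle is controlling the remaining poles. Unlike $R(\lambda)$, whose poles are only at $\{\beta = 0\}$ for $\beta \in D(\lambda)$, the left-hand side can develop apparent poles along $\{\gamma = 0\}$ for partial sums $\gamma = \beta_1 + \cdots + \beta_i$ that are not in $D(\lambda)$, and that need not even be roots. For instance, in $SL_4$ with $\lambda = (0,1,0,1)$ and $D(\lambda) = \{\alpha_1, \alpha_3, \alpha_1+\alpha_2+\alpha_3\}$, the paths $(\alpha_1, \alpha_3)$ and $(\alpha_3, \alpha_1)$ each carry an apparent pole at $\alpha_1 + \alpha_3 = 0$, but they cancel:
$$\frac{1}{\alpha_1(\alpha_1 + \alpha_3)} + \frac{1}{\alpha_3(\alpha_1 + \alpha_3)} = \frac{1}{\alpha_1 \alpha_3}.$$
Proving the analogous cancellation in general is the combinatorial heart of the argument; I expect it to come from an involution on $\lambda$-paths that swaps consecutive commuting reflections $s_{\beta_i}s_{\beta_{i+1}}$ and pairs spurious-pole contributions. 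Once every such spurious pole cancels, $L(\lambda) - R(\lambda)$ is a rational function whose only remaining poles have matching residues on both sides, hence a polynomial; since both $L$ and $R$ tend to $1$ as the root variables go to infinity (the left-hand side via the empty path, the right-hand side via the leading $1$ in each factor), this polynomial must vanish identically, giving $L(\lambda) = R(\lambda)$.
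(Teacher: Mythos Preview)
Your proposal has a genuine gap, and you flag it yourself: the cancellation of ``spurious poles'' along hyperplanes $\{\beta_1+\cdots+\beta_i=0\}$ that do not correspond to elements of $D(\lambda)$ is asserted but not proved. You say you \emph{expect} an involution on $\lambda$-paths swapping commuting reflections to do the job, but you do not construct it, and in general the paths are indexed by arbitrary reflections $s_\beta$ with $\beta\in D(\lambda_i)$, not by simple reflections, so a naive ``swap adjacent commuting steps'' move need not stay inside the set of $\lambda$-paths or pair poles cleanly. This is the crux of the argument, and without it the residue strategy does not close.

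There is a second, related gap: even granting the spurious-pole cancellation, you only match residues at \emph{simple} $\alpha\in D(\lambda)$. The right-hand side has simple poles along $\{\beta=0\}$ for every $\beta\in D(\lambda)$, including non-simple ones, and you never check those. For non-simple $\beta$, partial sums $\beta_1+\cdots+\beta_i$ proportional to $\beta$ can arise from many paths (not just those with $\beta_1=\beta$), so the residue computation on the left is substantially harder. Without controlling these, $L(\lambda)-R(\lambda)$ is not shown to be pole-free, and the ``polynomial vanishing at infinity'' step does not apply.

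For comparison, the paper does not argue by residues at all. It reproves Nakada's formula as the special case $v=id$ of \Cref{col:Nak}, whose proof is geometric: the left-hand side is identified with the ratio $\ssm(Y(v))|_w/\ssm(Y(w))|_w$ via the Chevalley recursion (\Cref{thm:LocCSM2}), and this ratio is shown to equal the equivariant multiplicity $e_{w,G/P}(\csm(R_w^v))$ (\Cref{prop:eqmult}). Since $Y(id)=G/P$ is smooth at every $wP$, the smoothness criterion of \Cref{thm:smoothP} yields the product on the right. In this approach the denominators are packaged as Euler and Chern classes of tangent and normal spaces, and no pole cancellation needs to be verified by hand.
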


Taking the lowest degree terms of the formula in Theorem \ref{thm:Nak},
we get the following corollary.
\begin{corol}\cite[Corollary 7.2]{nakada:colored}
Let $\lambda$ be a pre-dominant integral weight and $d=|D(\lambda)|$.
Then 
$$
\sum_{
(\beta_{1},\beta_{2},\ldots,\beta_d)\in {\rm MPath}(\lambda)}
\frac{1}{\beta_1}\cdot \frac{1}{\beta_1+\beta_2}\cdot \ldots \cdot\frac{1}{\beta_1+\beta_2+\cdots +\beta_d}
=\prod_{\beta\in D(\lambda)}\frac{1}{\beta}.
$$
\end{corol}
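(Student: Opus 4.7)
The plan is to extract the lowest-degree (most negative) homogeneous component from both sides of \Cref{thm:Nak}. Each positive root $\beta \in R^+$ is a homogeneous element of degree $1$ in $S(\fh^*)$, so the identity lives in the fraction field $\Frac S(\fh^*)$ and both sides decompose into homogeneous rational summands.

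On the right-hand side, expand
\[
\prod_{\beta\in D(\lambda)}\!\Bigl(1+\tfrac{1}{\beta}\Bigr) \;=\; \sum_{S\subseteq D(\lambda)} \prod_{\beta\in S} \tfrac{1}{\beta}.
\]
The homogeneous components range in degree from $0$ (for $S=\emptyset$) down to $-d$ (for $S=D(\lambda)$), and the unique degree $-d$ component equals $\prod_{\beta\in D(\lambda)} 1/\beta$, which is exactly the right-hand side of the corollary.

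On the left-hand side, the summand associated to a $\lambda$-path $(\beta_1,\ldots,\beta_r)$ is homogeneous of degree $-r$. By the discussion preceding \Cref{thm:Nak}, every $\lambda$-path has length at most $d=|D(\lambda)|$, so the degree $-d$ component of the left-hand side is precisely the partial sum over paths of length exactly $d$, i.e. over $\mathrm{MPath}(\lambda)$. Equating the degree $-d$ components on the two sides of \Cref{thm:Nak} yields the claimed identity.

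There is no real obstacle here: the only fact that needs invoking beyond homogeneity is the bound on the length of $\lambda$-paths, which is built into \Cref{lem:pre-dom} and the paragraph identifying $\mathrm{MPath}(\lambda)$ as the set of $\lambda$-paths of length $d$. Thus the corollary follows by a degree comparison from \Cref{thm:Nak}.
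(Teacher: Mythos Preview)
Your proof is correct and follows essentially the same approach as the paper: the corollary is obtained by taking the lowest-degree terms on both sides of \Cref{thm:Nak}. Your write-up simply makes explicit the homogeneity bookkeeping that the paper leaves implicit.
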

This formula generalizes the classical hook formula \eqref{equ:hook} and the Peterson--Proctor formula \eqref{equ:petersonproctor-intro}; see also \S \ref{sec:genhook} below. It is also related to an equality of rational functions involving root partitions for cluster variables cf. \cite{Cas19,casbi2019newton}.

\subsection{Peterson minuscule elements}\label{sec:minuscule} 
For any integral weight $\pi$, D. Peterson defined the notion of a $\pi$-minuscule 
Weyl group element, see below and  \cite{proctor:bruhat,carrell:vector,proctor:minuscule,stembridge:fullycomm,stembridge2001minuscule,graham.kreiman:cominuscule_pt}.
Examples of $\pi$-minuscule elements are the minimal length representatives in $W^P$, where
$P$ is the maximal parabolic associated to a minuscule fundamental weight.~{Minuscule elements are fully commutative
\cite{stembridge:fullycomm}; in particular, in type A they are $321$-avoiding.}
We will utilize this notion to rewrite Nakada's formula and its generalizations considered in this paper in terms of Weyl group elements. 
\begin{defin}[$\pi$-minuscule elements]\label{def:minuscule}
Let $\pi$ be an integral weight.
An element $w\in W$ is called {\bf $\pi$-minuscule} if
there is a reduced expression $w=s_{i_1} s_{i_2}\cdots s_{i_\ell}$ such that
\begin{equation}\label{minuscule:1}
\langle s_{i_{k+1}} s_{i_{k+2}}\cdots s_{i_\ell}(\pi), \alpha_{i_k}^\vee \rangle=1\hspace{1cm} (1\leq k\leq \ell)
\end{equation}
Equivalently, 
\begin{equation}\label{equ:actiononmin}
s_{i_k} s_{i_{k+1}} s_{i_{k+2}}\cdots s_{i_\ell}(\pi) = \pi - \alpha_{i_\ell} - \ldots - \alpha_{i_k}.
\end{equation}
\end{defin}
From definition it follows that if $w$ is $\pi$-minuscule, then its length is $\ell(w) = \h(\pi- w(\pi))$.  
The next result follows immediately from analyzing the inversion set of $w$. 
\begin{lemma}\label{lem:gamma} \cite[Proposition 5.1]{stembridge2001minuscule}
If $w\in W$ then
\begin{center}
$w$ is $\pi$-minuscule $\iff$ $\langle\pi, \gamma^\vee\rangle=1$ for all $\gamma\in R^{+}$
such that $w s_\gamma <w$.
\end{center}
\end{lemma}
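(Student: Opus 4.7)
The plan is to identify the set $\{\gamma \in R^+ : ws_\gamma < w\}$ with the right inversion set of $w$ and use the standard description of this set in terms of a reduced expression. Recall that $ws_\gamma < w$ if and only if $w(\gamma) < 0$, and if $w = s_{i_1} s_{i_2} \cdots s_{i_\ell}$ is any reduced expression, the right inversions are exactly
\[
\gamma_k := s_{i_\ell} s_{i_{\ell-1}} \cdots s_{i_{k+1}}(\alpha_{i_k}), \qquad 1 \le k \le \ell,
\]
and these are pairwise distinct with cardinality equal to $\ell(w)$. This is a standard fact from the theory of Coxeter groups and may be invoked without proof.

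The key observation is then the identity, valid for any reduced expression of $w$ and any $k$,
\[
\langle \pi, \gamma_k^\vee \rangle = \langle \pi, s_{i_\ell} \cdots s_{i_{k+1}}(\alpha_{i_k}^\vee) \rangle = \langle s_{i_{k+1}} \cdots s_{i_\ell}(\pi), \alpha_{i_k}^\vee \rangle,
\]
obtained from the $W$-invariance of the natural pairing $\langle \cdot, \cdot \rangle$ between $\mathfrak{h}^*$ and $\mathfrak{h}$ and the fact that $(w\alpha)^\vee = w(\alpha^\vee)$. Thus the minuscule condition \eqref{minuscule:1} at index $k$, applied to the reduced expression $w = s_{i_1}\cdots s_{i_\ell}$, says exactly that $\langle \pi, \gamma_k^\vee \rangle = 1$.

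For the forward direction, suppose $w$ is $\pi$-minuscule with witnessing reduced expression $s_{i_1}\cdots s_{i_\ell}$, and let $\gamma \in R^+$ satisfy $ws_\gamma < w$. Then $\gamma$ is a right inversion, hence $\gamma = \gamma_k$ for some $k$, and the identity above combined with \eqref{minuscule:1} yields $\langle \pi, \gamma^\vee \rangle = 1$. For the converse, assume $\langle \pi, \gamma^\vee \rangle = 1$ for every $\gamma \in R^+$ with $ws_\gamma < w$, and fix any reduced expression $w = s_{i_1}\cdots s_{i_\ell}$. Since each $\gamma_k$ is a right inversion, the hypothesis gives $\langle \pi, \gamma_k^\vee \rangle = 1$ for every $k$, which translates via the identity above into \eqref{minuscule:1}; hence $w$ is $\pi$-minuscule.

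There is essentially no obstacle: the lemma is a direct dictionary between the per-reduced-expression formulation of minuscule (Definition \ref{def:minuscule}) and the intrinsic formulation in terms of right inversions, and the whole argument reduces to the bijection between right inversions and positions in a reduced word, together with equivariance of the coroot pairing. The only point requiring mild care is ensuring that in the converse direction the condition does not depend on the chosen reduced expression, which is automatic because the hypothesis is stated entirely in terms of $w$ and the set of positive roots.
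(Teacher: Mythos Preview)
Your proof is correct and follows exactly the approach indicated in the paper, which simply remarks that the result ``follows immediately from analyzing the inversion set of $w$'' without giving further details. You have spelled out precisely this analysis: identifying $\{\gamma\in R^+:ws_\gamma<w\}$ with the right inversion set, parametrizing it via a reduced expression, and matching the pairing $\langle\pi,\gamma_k^\vee\rangle$ with the defining condition~\eqref{minuscule:1}.
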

From now on we restrict to the case when $\pi$ is a a dominant integral weight. Let $W_\pi=\Stab_W(\pi)$ denote the stabilizer subgroup of $\pi$ inside $W$. This determines the parabolic subgroup $P$ such that $W_P=W_\pi$, containing simple roots $\Sigma_P:=\{\alpha_i\in \Sigma\mid \langle \pi,\alpha_i^\vee\rangle=0\}$. Let $W^\pi$ denote the set of minimal length representatives for the cosets $W/W_\pi$.
\begin{remark}\label{rem:wminuscule}
It follows from \Cref{lem:gamma} that if $w$ is $\pi$-minuscule, then $w\in W^\pi$,
and
the property (\ref{minuscule:1}) holds for any reduced expression 
of $w=s_{i_1} s_{i_2}\cdots s_{i_\ell}$.
\end{remark}
We also need the following definition.
\begin{defin}\label{defin:order}
For $u,w\in W^\pi$ and $\beta\in R^{+}$, define 
\begin{equation*}\label{equ:order}
u \overset{\beta}{\to} w \textit{\quad if \quad}  s_\beta wW_\pi=uW_\pi, \textit{ and }u<w.
\end{equation*}
\end{defin}
\begin{lemma}\label{lemma:ubetav} Let $\pi$ be dominant integral and $u,w \in W^\pi$ such that 
$u \overset{\beta}{\to} w$.~Then the following hold:

(a) The root $\beta$ is unique.

(b) There exists a unique positive root $\gamma$ such that $s_\beta w W_P = w s_\gamma W_P$.
Furthermore, 
\[ \gamma= - w^{-1}(\beta) \quad \textrm{ and } \quad ws_\gamma < w \/. \]

(c) If in addition $w$ is $\pi$-minuscule then $\langle w(\pi), \beta^\vee \rangle = -1$.
\end{lemma}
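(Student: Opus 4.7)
The plan is to prove (b) first as the central computation, deduce the uniqueness in (a) from the same key ingredient, and obtain (c) as a short consequence of (b) combined with \Cref{lem:gamma}.

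For (b), my starting point is the identity $s_\beta w = ws_{w^{-1}\beta}$. Setting $\eta := w^{-1}\beta \in R$, I would rule out every possibility except $\eta \in -R^+\setminus R_P$. If $\eta \in R_P$, then $s_\eta \in W_P$ and $s_\beta w W_P = w W_P$, contradicting $u<w$. If $\eta \in R^+\setminus R_P$, then $w(\eta) = \beta > 0$ means $\eta$ is not an inversion of $w$; the standard Bruhat--Chevalley comparison on $W/W_P$ then gives $ws_\eta W_P > wW_P$ in $W^P$, again contradicting $u<w$. Hence $\eta$ is negative and outside $R_P$, so $\gamma := -\eta = -w^{-1}\beta > 0$ satisfies $s_\beta w = ws_\gamma$. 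Moreover $w(\gamma) = -\beta < 0$ shows $\gamma$ is an inversion of $w$, so the strong exchange condition yields $ws_\gamma < w$ in the Bruhat order on $W$.

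For the uniqueness assertions in (a) and (b), the key subclaim I would establish is: if $\gamma_1,\gamma_2 \in R$ satisfy $s_{\gamma_1}s_{\gamma_2} \in W_P$, then either $\gamma_1 = \pm \gamma_2$ or both $\gamma_i \in R_P$. This uses that $W_P$ acts faithfully on $\mathrm{span}(\Sigma_P)$ and fixes $\mathrm{span}(\Sigma_P)^\perp$ pointwise, together with $R \cap \mathrm{span}(\Sigma_P) = R_P$: a non-identity rotation $s_{\gamma_1}s_{\gamma_2}$ fixes pointwise exactly $\mathrm{span}(\gamma_1,\gamma_2)^\perp$, so if it lies in $W_P$ then $\mathrm{span}(\gamma_1,\gamma_2)\subset \mathrm{span}(\Sigma_P)$, forcing both $\gamma_i \in R_P$. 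To prove (a), suppose $s_{\beta_1}wW_P = s_{\beta_2}wW_P$ with $\beta_1,\beta_2 \in R^+$; then $s_{w^{-1}\beta_1} s_{w^{-1}\beta_2} \in W_P$. The alternative that both $w^{-1}\beta_i \in R_P$ forces $u = w$ (contradicting $u<w$), while $w^{-1}\beta_1 = -w^{-1}\beta_2$ is ruled out by positivity of the $\beta_i$; hence $\beta_1 = \beta_2$. The uniqueness of the positive $\gamma$ in (b) is the same argument applied to $s_\gamma W_P = s_{\gamma'} W_P$.

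Part (c) is then immediate: by (b), $\gamma = -w^{-1}\beta$ is a positive root with $ws_\gamma < w$, so \Cref{lem:gamma} applied to the $\pi$-minuscule element $w$ gives $\langle \pi,\gamma^\vee\rangle = 1$. Weyl invariance of the pairing then yields $\langle w(\pi), w(\gamma)^\vee\rangle = 1$, and since $w(\gamma) = -\beta$ we conclude $\langle w(\pi),\beta^\vee\rangle = -1$. The main obstacle I anticipate is carefully invoking the Bruhat--Chevalley comparison on $W/W_P$ used to rule out $\eta > 0$ in step (b); this is a well-known compatibility between Bruhat order on $W$ and on $W/W_P$, but it must be cited cleanly because the lemma allows $u<w$ not to be a covering relation, so naive strong-exchange reasoning inside $W$ does not directly transport across the projection $W\to W^P$.
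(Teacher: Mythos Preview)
Your proof is correct. The route for (b) and (c) is essentially the same as the paper's: both identify $\gamma = -w^{-1}(\beta)$ after observing that $w^{-1}(\beta)<0$, and both deduce (c) from (b) via \Cref{lem:gamma}. The paper simply asserts $s_\beta w < w$ (implicitly using that the projection $W\to W^P$ is order-preserving), while you unpack this into the case analysis on $\eta = w^{-1}\beta$; these are the same argument.

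The genuine difference is in the uniqueness assertions. The paper outsources them: uniqueness of $\beta$ is quoted from \cite[Lemma 4.1]{fulton.woodward}, and uniqueness of $\gamma$ from \cite[Lemma 2.2]{buch.m:nbhds}. Your subclaim---that $s_{\gamma_1}s_{\gamma_2}\in W_P$ forces $\gamma_1=\pm\gamma_2$ or both $\gamma_i\in R_P$, via the fixed-point locus of a nontrivial rotation together with $R\cap \mathrm{span}(\Sigma_P)=R_P$---is a clean self-contained replacement for both citations. This buys independence from the literature at the cost of a short extra paragraph; the paper's version is terser but relies on the reader chasing two references. Your worry about ``Bruhat--Chevalley comparison on $W/W_P$'' is well-placed but not an obstacle: the needed fact is exactly that the projection $W\to W^P$ is Bruhat-order-preserving, which the paper also uses without comment.
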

\begin{proof} The uniqueness of $\beta$ follows from \cite[Lemma 4.1]{fulton.woodward}. Since 
$s_\beta w <w$ it follows that $w^{-1}(\beta)<0$, thus $\gamma:= - w^{-1}(\beta)$ is a positive root,
and $s_\gamma W_P = w^{-1} s_\beta w W_P = w^{-1} u W_P$. Since $w^{-1}u \notin W_P$ by hypothesis, the uniqueness of $\gamma$ follows from \cite[Lemma 2.2]{buch.m:nbhds}. 
Finally, since $ws_\gamma W_P = u W_P$ and $u <w$, then necessarily
$ws_\gamma < w$. This finishes the proof of (b). Part (c) follows from (b) and \Cref{lem:gamma}.
\end{proof}

The relation between pre-dominant integral weights and $\pi$-minuscule elements is given by the following proposition.
\begin{prop}\label{prop:pre-dom}\cite[Propositions 10.1 and 10.3]{nakada:colored}
There is a bijection between the following two sets
\begin{align*}
\{\textit{pre-dominant integral weights } \lambda\}&\\
\longleftrightarrow  \{(\pi, w)\mid & \pi \textit{ is dominant integral, and }w \textit{ is } \pi \textit{-minuscule}\}.
\end{align*}
Here, $\lambda$ is determined by $(\pi,w)$ by the formula $\lambda=w(\pi)$.
Conversely, for any pre-dominant integral weight $\lambda$,
take a maximal $\lambda$-path 
$(\alpha_{i_1},\alpha_{i_2},\ldots,\alpha_{i_d})\in {\rm MPath}(\lambda)$ and
set $w=s_{i_1} s_{i_2}\cdots s_{i_d}$, $\pi=w^{-1}(\lambda)$.
Then $\pi$ is a dominant integral weight and $w$ is $\pi$-minuscule.
Moreover $D(\lambda)= S(w)=\{\beta\in R^{+}\mid s_\beta w<w\}$, and the correspondence $(\alpha_{i_1},\alpha_{i_2},\ldots,\alpha_{i_d})\mapsto s_{i_1} s_{i_2}\cdots s_{i_d}$ from ${\rm MPath}(\lambda)$ to $\Red(w)$ is bijective.
\end{prop}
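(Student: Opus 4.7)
The plan is to establish the bijection by proving the forward direction directly, constructing the inverse by induction on $|D(\lambda)|$, and reading off $D(\lambda)=S(w)$ and the bijection $\MPath(\lambda)\leftrightarrow \Red(w)$ as byproducts. For the forward direction, given $\pi$ dominant integral and $w$ being $\pi$-minuscule, I would test pre-dominance of $\lambda:=w(\pi)$ by rewriting $\langle w(\pi),\beta^\vee\rangle = \langle \pi, w^{-1}(\beta^\vee)\rangle$ for $\beta\in R^+$. If $w^{-1}(\beta)>0$, dominance of $\pi$ gives a nonnegative value; if $w^{-1}(\beta)<0$, then $\gamma:=-w^{-1}(\beta)\in R^+$ satisfies $ws_\gamma = s_\beta w<w$, so \Cref{lem:gamma} forces $\langle \pi, \gamma^\vee\rangle=1$ and hence $\langle w(\pi),\beta^\vee\rangle=-1$. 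This dichotomy simultaneously proves pre-dominance of $\lambda$ and identifies $D(\lambda)$ with $\{\beta\in R^+ : s_\beta w<w\} = S(w)$.

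For the inverse map I would induct on $|D(\lambda)|$. The base case $D(\lambda)=\emptyset$ makes $\lambda$ dominant, so $\pi:=\lambda$ and $w:=id$ work. Otherwise \Cref{lem:pre-dom}(1) furnishes a simple root $\alpha_i\in D(\lambda)$, and $\lambda':=s_i(\lambda)=\lambda+\alpha_i$ is pre-dominant with $|D(\lambda')|<|D(\lambda)|$ by \Cref{lem:pre-dom}(2,3). Induction yields a pair $(\pi,w')$ with $\lambda'=w'(\pi)$ and $w'$ being $\pi$-minuscule; setting $w:=s_i w'$ gives $w(\pi)=\lambda$ automatically. The critical step is reducedness of the product, equivalent to $(w')^{-1}(\alpha_i)>0$: this follows because $\langle \pi,(w')^{-1}(\alpha_i^\vee)\rangle = \langle \lambda',\alpha_i^\vee\rangle = 1$, so $(w')^{-1}(\alpha_i)<0$ would force some positive coroot to pair to $-1$ with $\pi$, contradicting dominance. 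Prepending $s_i$ to a reduced word for $w'$ (which satisfies the minuscule condition for every reduced word by \Cref{rem:wminuscule}) then yields a reduced word for $w$ satisfying \Cref{def:minuscule}, since the one new requirement at position one reads exactly $\langle w'(\pi),\alpha_i^\vee\rangle = 1$.

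The uniqueness of $(\pi,w)$ and the path/word bijection fall out. Any maximal $\lambda$-path must terminate at some $\lambda_d$ with $D(\lambda_d)\cap\Sigma=\emptyset$, which by \Cref{lem:pre-dom}(1) forces $D(\lambda_d)=\emptyset$; since $\lambda_d$ lies in the $W$-orbit of $\lambda$, it is the unique dominant representative, pinning down $\pi$. Because $\pi$-minuscule elements lie in $W^\pi$ by \Cref{rem:wminuscule} and $W^\pi \to W\pi$ is a bijection, the equation $w(\pi)=\lambda$ then determines $w$ uniquely. For the bijection $\MPath(\lambda)\leftrightarrow\Red(w)$, one direction is supplied by the inductive construction; conversely, given a reduced expression $w=s_{i_1}\cdots s_{i_d}$, setting $\lambda_k:=s_{i_k}(\lambda_{k-1})$ and using $\lambda_{k-1} = s_{i_k}s_{i_{k+1}}\cdots s_{i_d}(\pi)$ together with the $\pi$-minuscule condition (valid on every reduced word by \Cref{rem:wminuscule}) gives $\langle \lambda_{k-1},\alpha_{i_k}^\vee\rangle = -1$, exhibiting $(\alpha_{i_1},\ldots,\alpha_{i_d})$ as a maximal $\lambda$-path. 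The two maps are inverse by inspection. I expect the main obstacle to be the reducedness check in the inductive step, which is precisely where the dominance of $\pi$ is used.
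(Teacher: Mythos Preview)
The paper does not prove this proposition; it is quoted from \cite[Propositions 10.1 and 10.3]{nakada:colored} and stated without proof, so there is no in-paper argument to compare against. Your proposal stands on its own and is correct: the dichotomy on the sign of $w^{-1}(\beta)$ cleanly gives both pre-dominance of $w(\pi)$ and $D(\lambda)=S(w)$ via \Cref{lem:gamma}; the induction on $|D(\lambda)|$ using \Cref{lem:pre-dom} is the natural construction of the inverse, and your reducedness check (that $\langle \pi,(w')^{-1}\alpha_i^\vee\rangle=1$ forces $(w')^{-1}\alpha_i>0$ by dominance of $\pi$) is exactly the point where the argument could fail and you handle it correctly. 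The uniqueness argument via $W^\pi\to W\pi$ and the verification that any reduced word of $w$ yields a maximal $\lambda$-path (using \Cref{rem:wminuscule} to know the minuscule condition holds for \emph{every} reduced word) are also sound. This is essentially the argument one would expect to find in Nakada's original paper.
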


\begin{corol}\label{lem:wminuscule}
Let $\pi$ be a dominant integral weight, $w$ be $\pi$-minuscule and $\lambda=w(\pi)$. For $\beta\in R^+$, let $u\in W^\pi$ denote the minimal length representative in $s_\beta wW_\pi$. Then: 

(a) $\lambda\overset{\beta}{\to} s_\beta(\lambda)$ is equivalent to $u \overset{\beta}{\to} w$.

(b) If $u \overset{\beta}{\to} w$, then $u$ is also $\pi$-minuscule.
\end{corol}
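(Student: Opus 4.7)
For part (a), both conditions reduce to simple statements once unpacked. The relation $\lambda \overset{\beta}{\to} s_\beta(\lambda)$ amounts to $\langle \lambda, \beta^\vee\rangle = -1$, while $u \overset{\beta}{\to} w$ reduces to the Bruhat inequality $u < w$, since the coset condition $s_\beta w W_\pi = u W_\pi$ holds automatically by construction of $u$. For $(\Leftarrow)$, Lemma~\ref{lemma:ubetav}(c) gives $\langle w(\pi), \beta^\vee\rangle = -1 = \langle \lambda, \beta^\vee\rangle$. For $(\Rightarrow)$, I would rewrite $\langle \lambda, \beta^\vee\rangle = \langle \pi, w^{-1}(\beta)^\vee\rangle = -1$; since $\pi$ is dominant, this forces $w^{-1}(\beta) \in -R^+$, which is the standard criterion for $s_\beta w < w$, so $u \le s_\beta w < w$.

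For part (b), I would set $\lambda' := s_\beta(\lambda)$. By part (a) and Lemma~\ref{lem:pre-dom}(2), $\lambda'$ is pre-dominant, so Proposition~\ref{prop:pre-dom} produces a unique pair $(\pi', u')$ with $\pi'$ dominant integral, $u'$ being $\pi'$-minuscule, and $u'(\pi') = \lambda'$. The strategy is to prove $(\pi', u') = (\pi, u)$ by matching each coordinate. For $\pi' = \pi$, I use an orbit argument: both weights are dominant and both lie in the Weyl orbit $W\lambda' = W(s_\beta\lambda) = W\lambda = W\pi$, and a Weyl orbit contains at most one dominant weight, so $\pi' = \pi$. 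For $u' = u$, Remark~\ref{rem:wminuscule} gives $u' \in W^{\pi'} = W^\pi$, and both $u'$ and $u$ satisfy $(-)(\pi) = \lambda'$. Hence $u^{-1}u' \in \Stab_W(\pi) = W_\pi$, placing $u'$ and $u$ in the same right $W_\pi$-coset; since both are its minimal-length representative, $u' = u$, and so $u$ is $\pi$-minuscule.

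The most delicate point is the orbit identification $\pi' = \pi$, which bypasses any direct combinatorial manipulation of reduced expressions. I expect this detour to be essential: for non-simple $\beta$ the element $s_\beta w$ need not lie in $W^\pi$, so $u$ is generally not obtained from a reduced expression of $w$ by a single strong-exchange deletion, and a purely word-combinatorial proof of the $\pi$-minusculity of $u$ seems unwieldy. Everything else in the argument is bookkeeping against the definitions and Proposition~\ref{prop:pre-dom}.
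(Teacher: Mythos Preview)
Your proof is correct and follows essentially the same route as the paper's. Part (a) is the same chain of equivalences (split into two directions), using dominance of $\pi$ for one direction and \Cref{lemma:ubetav} for the other; part (b) is exactly the paper's argument: apply \Cref{lem:pre-dom}(2) and \Cref{prop:pre-dom} to $s_\beta(\lambda)$, then identify $\pi'=\pi$ via the fundamental-domain/orbit argument and conclude $u'=u$ since both are minimal representatives of the same $W_\pi$-coset.
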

\begin{proof} Part (a) follows from:
\[\lambda\overset{\beta}{\to} s_\beta(\lambda)\Leftrightarrow \langle\lambda,\beta^\vee\rangle=-1\Leftrightarrow \langle\pi,w^{-1}(\beta^\vee)\rangle=-1 \Leftrightarrow w^{-1}\beta<0\Leftrightarrow  s_\beta w<w\Leftrightarrow u\overset{\beta}{\to} w.\]
Here in the third equivalence, the $\Rightarrow$ direction follows from the fact that $\pi$ is dominant, while the $\Leftarrow$ direction follows from \Cref{lemma:ubetav}. 

To prove (b), observe that if $u\overset{\beta}{\to} w$, then $s_\beta\lambda$ is also a pre-dominant integral weight by \Cref{lem:pre-dom}(2). Because of \Cref{prop:pre-dom},
\[s_\beta\lambda=w'(\pi')\]
for some dominant integral weight $\pi'$, $w'\in W$ such that $w'$ is $\pi'$-minuscule. Therefore,
\[s_\beta w(\pi)=w'(\pi') \/,\]
 and because the dominant 
weights $\pi, \pi'$ are in the fundamental 
domain for the $W$-action, it follows that $\pi=\pi'$, and that $s_\beta wW_\pi=w'W_\pi$. By \Cref{rem:wminuscule}, $w'\in W^\pi$. Thus, $w'=u$, and $u$ is $\pi$-minuscule.
\end{proof}

With \Cref{lem:wminuscule}, Nakada's formula (\Cref{thm:Nak}) can be reformulated as follows.
\begin{theorem}[Nakada]\label{thm:anotherform}
Let $\pi$ be a dominant integral weight, and $w\in W$ be a $\pi$-minuscule element. Then: 
\[ \sum \frac{1}{\beta_1}\cdot \frac{1}{\beta_1+\beta_2}\cdot \ldots \cdot\frac{1}{\beta_1+\beta_2+\cdots +\beta_r}
=\prod_{\beta\in S(w)}\left(1+\frac{1}{\beta}\right) \/,
\] where the summation is over $r \ge 0$ and paths $x_r\overset{\beta_r}{\to} x_{r-1}\to \ldots \to x_1\overset{\beta_1}{\to} x_0=w$.
In particular, 
$$
{\sum_{x_d\overset{\beta_d}{\to} x_{d-1}\to \ldots \to x_1\overset{\beta_1}{\to} x_0=w\/}}
\frac{1}{\beta_1}\cdot \frac{1}{\beta_1+\beta_2}\cdot \ldots \cdot \frac{1}{\beta_1+\beta_2+\cdots +\beta_d}
=\prod_{\beta\in S(w)}\frac{1}{\beta},
$$
where $d=|S(w)|{= \ell(w)}$.
\end{theorem}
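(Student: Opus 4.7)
The plan is to translate the statement, which is phrased in terms of directed paths in the quotient $W^\pi$ ending at the $\pi$-minuscule element $w$, into the equivalent statement in terms of $\lambda$-paths of the pre-dominant integral weight $\lambda = w(\pi)$, and then invoke \Cref{thm:Nak} directly.

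First I would set $\lambda := w(\pi)$. By \Cref{prop:pre-dom}, $\lambda$ is a pre-dominant integral weight, and $D(\lambda) = S(w)$. This already identifies the product on the right-hand side of the two formulations: $\prod_{\beta \in D(\lambda)}(1+1/\beta) = \prod_{\beta \in S(w)}(1+1/\beta)$.

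Next I would build a label-preserving bijection between the two types of paths. Given a directed path $x_r \overset{\beta_r}{\to} x_{r-1} \to \cdots \overset{\beta_1}{\to} x_0 = w$ in $W^\pi$, set $\lambda_i := x_i(\pi)$ for $0 \le i \le r$. Each $x_i$ inherits $\pi$-minusculity: by induction, since $x_0 = w$ is $\pi$-minuscule and each step $x_{i+1} \overset{\beta_{i+1}}{\to} x_i$ preserves $\pi$-minusculity by \Cref{lem:wminuscule}(b). Then applying \Cref{lem:wminuscule}(a) to each edge gives $\lambda_i \overset{\beta_{i+1}}{\to} \lambda_{i+1}$, hence a $\lambda$-path $(\beta_1, \beta_2, \ldots, \beta_r)$ in the sense of \S\ref{sec:predominant}. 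Conversely, any $\lambda$-path $(\beta_1, \ldots, \beta_r)$ produces pre-dominant weights $\lambda_i = s_{\beta_i}\cdots s_{\beta_1}(\lambda)$; letting $x_i \in W^\pi$ be the minimal length representative of $s_{\beta_i}\cdots s_{\beta_1} w W_\pi$, one checks inductively using \Cref{prop:pre-dom} and \Cref{lem:wminuscule} that $x_i(\pi) = \lambda_i$, each $x_i$ is $\pi$-minuscule, and $x_{i+1} \overset{\beta_{i+1}}{\to} x_i$. This bijection clearly preserves the sequence $(\beta_1, \ldots, \beta_r)$, hence preserves each summand $1/(\beta_1 \cdot (\beta_1+\beta_2)\cdots(\beta_1+\cdots+\beta_r))$ on the left-hand side.

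With the bijection in hand, the first identity follows immediately from \Cref{thm:Nak} applied to $\lambda$. For the ``in particular'' statement, I would extract the lowest-degree (most singular) part of the equality. On the right the unique lowest-degree term is $\prod_{\beta \in S(w)} 1/\beta$, which has degree $-|S(w)|$. A summand on the left has degree $-r$, so only maximal-length paths (those with $r = d = |D(\lambda)|$) contribute, matching the corresponding statement in \cite[Corollary 7.2]{nakada:colored} under the bijection above. Finally, $|S(w)| = \ell(w)$ since $S(w) = \{\beta \in R^+ : w^{-1}(\beta) < 0\}$ is the left inversion set of $w$, whose cardinality is the length of $w$.

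The main (and essentially only) conceptual point is verifying that the correspondence $(\beta_1,\ldots,\beta_r) \leftrightarrow (x_r \to \cdots \to x_0 = w)$ is well-defined and bijective with labels preserved; once this is set up, the computation is transport of structure through \Cref{prop:pre-dom} and \Cref{lem:wminuscule}, so no real obstacle remains beyond book-keeping.
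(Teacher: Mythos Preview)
Your proposal is correct and follows exactly the approach the paper intends: the paper does not give an explicit proof but simply states that ``with \Cref{lem:wminuscule}, Nakada's formula (\Cref{thm:Nak}) can be reformulated'' in this form, and your argument spells out precisely this translation via the bijection between $\lambda$-paths and paths in $W^\pi$ furnished by \Cref{prop:pre-dom} and \Cref{lem:wminuscule}. The extraction of the lowest-degree part for the ``in particular'' statement is also the standard move (cf.\ \cite[Corollary 7.2]{nakada:colored}).
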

This statement will generalized in \Cref{col:Nak} below.

\subsection{Bruhat intervals of $\pi$-minuscule elements}\label{sec:minuscule-intervals}
The main goal of this section is to prove \Cref{prop:weakstrong}, stating that the intervals in the weak and ordinary (or strong) Bruhat
order determined by two $\pi$-minuscule elements coincide. Special cases appeared
in the works by Stembridge \cite[Thm.~7.1]{stembridge:fullycomm} (for $\pi$ a minuscule weight) and Proctor \cite[\S 10]{proctor:minuscule} (for $G$ simply laced),
but we have not seen \Cref{prop:weakstrong} in the generality we need.
  
For $u,w \in W$ the (left) weak Bruhat order is defined by $u<_L w$ iff 
$\ell(wu^{-1}) = \ell(w) - \ell(u)$. Equivalently, $w=vu$ and $\ell(vu) = \ell(v) + \ell(u)$. 
Observe that if $u<_L w$ then $u<w$ in the strong Bruhat order, but in general the 
two orders are different. For $v<w\in W^P$, set $[v, w]^P:=\{x\in W^P\mid v\leq x\leq w\}$. 
\begin{prop}\label{prop:weakstrong} Let $\pi$ is a dominant integral weight 
with $\Stab_W(\pi)=W_P$
and let $u<w$ be $\pi$-minuscule elements in $W^P$. Then the interval
$[u,w]^P \subset W^P$ is the same in the weak and strong Bruhat orders, i.e.,
\[ \{ v \in W^P: u \le v \le w \} = \{ v' \in W^P: u \le_L v' \le_L w \} \/. \] 
\end{prop}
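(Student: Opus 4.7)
The proof plan proceeds in two stages. First, I would show that every element $v$ of the strong interval $[u,w]^P$ is itself $\pi$-minuscule. Arguing by descending induction on $\ell(v)$ starting from $v=w$, given $v<w$ in $[u,w]^P$ I would pick a strong Bruhat cover $v \overset{\beta}{\to} v' \le w$ in $W^P$; the element $v'$ is $\pi$-minuscule by induction, and \Cref{lem:wminuscule}(b) then forces $v$ to be $\pi$-minuscule as well. This reduces the proposition to the following key assertion: \emph{for $\pi$-minuscule elements $x\le y$ in $W^P$, one has $x\le_L y$ in the left weak order.}

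Next, I would exploit \Cref{prop:pre-dom} together with \Cref{lem:wminuscule}(a), according to which reduced expressions of $y$ correspond bijectively to maximal $\lambda_y$-paths (with $\lambda_y=y(\pi)$), and an initial segment of such a path using simple roots $\alpha_{i_1},\dots,\alpha_{i_k}$ corresponds to left-peeling the simple reflections $s_{i_1},\dots,s_{i_k}$ from $y$. Consequently, $x\le_L y$ is equivalent to the existence of a (not necessarily maximal) $\lambda_y$-path terminating at $\lambda_x$. I would construct such a path by induction on the length difference $\ell(y)-\ell(x)$: \Cref{lem:pre-dom}(1) supplies a simple root $\alpha_i\in D(\lambda_y)\cap \Sigma$, and the goal is to select one for which $x \le s_i y$ still holds in the strong order, so that the induction can be applied to the pair $(x,s_i y)$.

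The selection of $\alpha_i$ is the heart of the matter, and the main obstacle. The standard Bruhat lifting property offers two alternatives for each candidate $\alpha_i$: either $x\le s_i y$ (the desired case) or $s_i x<x$ and $s_i x\le s_i y$. If every candidate falls in the bad case, one obtains $D(\lambda_y)\cap\Sigma \subseteq D(\lambda_x)\cap \Sigma$, and the pair $(s_i x,s_i y)$ remains $\pi$-minuscule with the same length difference but smaller total length. At this point I would invoke a secondary induction on $\ell(x)+\ell(y)$ to obtain $s_i x\le_L s_i y$, and then lift this decomposition back through conjugation by $s_i$, verifying via the pre-dominant weight structure (repeated application of \Cref{lem:pre-dom}(3)) that the intermediate element $x'$ satisfying $s_i y = x'\cdot s_i x$ has $\ell(s_i x' s_i)=\ell(x')$, so that $y=(s_i x' s_i)\cdot x$ is a reduced factorization. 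The $\pi$-minuscule combinatorics, specifically the fact that the descent structures of $\lambda_x$ and $\lambda_y$ track each other in a controlled way along the path, is what prevents a pathological blowup in the conjugation step and closes the induction.
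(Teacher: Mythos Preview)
Your first stage is correct and matches the paper: every element of $[u,w]^P$ is $\pi$-minuscule, by descending induction via \Cref{lem:wminuscule}(b).

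The second stage, however, is far more elaborate than necessary, and the ``bad case'' contains a genuine gap. You reduce to showing $x\le_L y$ for $\pi$-minuscule $x\le y$, and attempt to build a $\lambda_y$-path by peeling simple reflections, with a secondary induction when every $\alpha_i\in D(\lambda_y)\cap\Sigma$ fails the lifting criterion. In that bad case you obtain $s_ix\le_L s_iy$ by induction, so $s_iy = x'\cdot(s_ix)$ reduced, and then assert $\ell(s_ix's_i)=\ell(x')$ so that $y=(s_ix's_i)\cdot x$ is reduced. But $s_ix's_i = yx^{-1}$, so the assertion $\ell(s_ix's_i)=\ell(x')=\ell(y)-\ell(x)$ is exactly the statement $x\le_L y$ that you are trying to prove. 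The vague appeal to ``the pre-dominant weight structure (repeated application of \Cref{lem:pre-dom}(3))'' does not explain how to rule out $\ell(yx^{-1})=\ell(x')+2$, which is a priori possible since conjugation by a simple reflection can change length by $2$. As written, this step is either circular or incomplete.

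The paper's argument avoids all of this with a one-line observation you overlooked. It suffices to show that every \emph{strong cover} $v\lessdot w$ in $W^P$ (with $w$ $\pi$-minuscule) is already a weak cover. Writing $vW_P=s_\beta wW_P$, both $v$ and $w$ are $\pi$-minuscule, so $\ell(w)=\h(\pi-w(\pi))$ and $\ell(v)=\h(\pi-v(\pi))$; combined with $\langle w(\pi),\beta^\vee\rangle=-1$ from \Cref{lemma:ubetav}(c), this gives
\[
1=\ell(w)-\ell(v)=\h\bigl(s_\beta w(\pi)-w(\pi)\bigr)=\h(\beta),
\]
forcing $\beta$ to be simple and $v=s_\beta w$. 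The height formula for the length of a $\pi$-minuscule element is the key idea, and it makes the induction on $\lambda$-paths and the conjugation argument entirely unnecessary.
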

\begin{proof} Let $v \in W^P$ such that $v < w$. Then it suffices to show that 
$v \le_L w$ when $w$ covers $v$ in the strong Bruhat order in $W^P$, i.e., 
$\ell(w) - \ell(v) =1$. Write
$v W_P= s_\beta w W_P = w s_\gamma W_P$ for positive roots $\beta, \gamma$ as in 
\Cref{lemma:ubetav}. Since $w$ is $\pi$-minuscule, $v$ is also $\pi$-minuscule 
by \Cref{lem:wminuscule} (b).
Using that $\ell(w)= \h(\pi - w(\pi))$ we obtain 
\[ 1 = \ell(w) - \ell(v) = \mathrm{ht}(s_\beta w(\pi) - w(\pi)) = 
\h(w(\pi) - \langle w(\pi), \beta^\vee\rangle \beta - w(\pi)) = \h(\beta) \/.\]
The last equality follows because the multiplicity $\langle w(\pi), \beta^\vee\rangle =-1$ 
by \Cref{lemma:ubetav}(c). Therefore $\beta$ must be a simple root, and, furthermore, $w=s_\beta v$,
thus $v \le_L w$. 
\end{proof}
The special case $u=id$ of the following corollary has been proved by Nakada
\cite[Proposition 10.3]{nakada:colored}.
\begin{corol}\label{cor:skewred} Assume the hypotheses from \Cref{prop:weakstrong}
and let $d:= \ell(w)-\ell(u)$.
There is a bijection between the set of reduced words of $wu^{-1}$ and the set of maximal length
paths from $u$ to $w$, {sending a reduced word $wu^{-1} = s_{\beta_1} \cdot \ldots \cdot s_{\beta_d}$ to} the path
\[ u=x_d\overset{\beta_d}{\to} x_{d-1}\to \ldots \to x_1\overset{\beta_1}{\to} x_0=w \/. \]
\end{corol}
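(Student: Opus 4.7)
The plan is to deduce the bijection from \Cref{prop:weakstrong} together with the fact that right tails of reduced expressions of $\pi$-minuscule elements are themselves $\pi$-minuscule. By \Cref{prop:weakstrong} one has $u \le_L w$, and hence $\ell(wu^{-1}) = \ell(w) - \ell(u) = d$.

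For the direction from paths to reduced words, take a maximal length path $u = x_d \overset{\beta_d}{\to} \cdots \overset{\beta_1}{\to} x_0 = w$. Since the path is of maximal length, each step satisfies $\ell(x_{i-1}) - \ell(x_i) = 1$, so the length calculation in the proof of \Cref{prop:weakstrong} forces each $\beta_i$ to be a simple root and $x_{i-1} = s_{\beta_i} x_i$ in $W$. Iterating yields $w = s_{\beta_1} s_{\beta_2} \cdots s_{\beta_d} u$, and the additivity of lengths then forces $wu^{-1} = s_{\beta_1} \cdots s_{\beta_d}$ to be a reduced expression.

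For the reverse, given a reduced expression $wu^{-1} = s_{\beta_1} \cdots s_{\beta_d}$, define $x_i := s_{\beta_{i+1}} \cdots s_{\beta_d} u$ for $0 \le i \le d$. Concatenation with any reduced expression of $u$ produces a reduced expression of $w$, whose right tails are therefore themselves reduced. By \Cref{rem:wminuscule}, the minuscule condition \eqref{minuscule:1} holds for every reduced expression of $w$, and restricting these conditions to any right tail shows that the tail is again $\pi$-minuscule, and therefore lies in $W^P$. Hence every $x_i$ belongs to $W^P$, and the identity $x_{i-1} = s_{\beta_i} x_i$ with $\ell(x_{i-1}) = \ell(x_i) + 1$ supplies the arrow $x_i \overset{\beta_i}{\to} x_{i-1}$ in the sense of \Cref{defin:order}. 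The two constructions are manifestly inverse to one another.

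The only point requiring genuine care is verifying that the intermediate products $x_i$ remain in $W^P$; this is where the $\pi$-minuscule hypothesis is essential, through the observation that the condition \eqref{minuscule:1} automatically descends to right tails of any reduced expression of $w$. Without this, partial products could step outside $W^P$ and the path would not lie in the parabolic Bruhat graph.
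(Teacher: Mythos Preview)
Your proof is correct and follows essentially the same approach as the paper's. The paper only treats the paths-to-words direction explicitly (via \Cref{prop:weakstrong} and \Cref{lem:wminuscule}) and then asserts that bijectivity ``easily follows''; you supply the reverse map explicitly and justify that the intermediate $x_i$ lie in $W^P$ by observing that right tails of a reduced expression of a $\pi$-minuscule element are again $\pi$-minuscule (via \Cref{rem:wminuscule}), which is a clean way to fill in what the paper omits.
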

\begin{proof} Consider any path 
$u=x_d\overset{\beta_d}{\to} x_{d-1}\to \ldots \to x_1\overset{\beta_1}{\to} x_0=w$
in the (strong) Bruhat interval $[u,w]^P$. Then $\ell(x_{i-1}) - \ell(x_{i}) = 1$ for all $i$, and
since $w$ is $\pi$-minuscule all $x_i$'s must be also. Because weak and strong 
Bruhat orders coincide in $[u,w]^P$ by \Cref{prop:weakstrong},
$x_{i} = s_{\beta_i} x_{i-1}$ and each of the roots $\beta_i$ must be simple. Then
$s_{\beta_d}s_{\beta_{d-1}} \cdot \ldots \cdot s_{\beta_1}= uw^{-1}$ and this decomposition
is reduced. This associates a reduced word of $wu^{-1}$ to the given path (by reading in reverse), 
and it easily follows that this correspondence is a bijection.\end{proof}

\section{Chern--Schwartz--MacPherson classes of Schubert cells}\label{sec:eqCSM}
In this section, we recall the definition of the equivariant Chern--Schwartz--MacPherson (CSM) and Segre--MacPherson (SM) classes, 
then we recall the Chevalley formula for the CSM and SM classes of Schubert cells in partial flag manifolds \cite{AMSS:shadows,su:quantum}.

\subsection{Definition}
Let $X$ be a complex algebraic variety. The group of constructible functions $\calF(X)$ consists of functions $\varphi = \sum_W c_W \one_W$, where the sum is over a finite set of constructible subsets $W \subset X$,
 $c_W \in \bbZ$ are integers, and $\one_W$ is the characteristic function of $W$.
 For a proper morphism $f:Y\rightarrow X$, there is a linear map $f_*:\calF(Y)\rightarrow \calF(X)$, such that for any constructible subset $W\subset Y$, $f_*(\one_W)(x)=\chi_{\textit{top}}(f^{-1}(x)\cap W)$, where $x\in X$ and $\chi_{\textit{top}}$ denotes the topological Euler characteristic.
Thus $\calF$ can be considered as a (covariant) functor from the category of complex algebraic varieties and 
proper morphisms to the category of abelian groups.

According to a conjecture attributed to Deligne and Grothendieck, there is a unique natural transformation $c_*: \calF \to H_*$ from the functor of constructible functions on a complex algebraic variety $X$ to the homology functor, where all morphisms are proper, such that if $X$ is smooth then $c_*(\one_X)=c(TX)\cap [X]$, {where $c(TX)$ denotes the total Chern class of the tangent bundle $TX$ and $[X]$ denotes the fundamental class}.  This conjecture was proved by MacPherson \cite{macpherson:chern}; the class $c_*(\one_X)$ for possibly singular $X$ was shown to coincide with a class defined earlier by M.-H.~Schwartz \cite{schwartz:1, schwartz:2, BS81}. 

The theory of CSM classes was later extended to the equivariant setting by Ohmoto \cite{ohmoto:eqcsm}. If $X$ has an 
action of a torus $T$, Ohmoto defined the group $\calF^T(X)$ of {\em equivariant\/} constructible functions. 
We will need the following properties of this group: 
\begin{enumerate} 
\item If $W \subseteq X$ is a constructible set which is invariant under the $T$-action, its characteristic function $\one_W$ is an element of $\calF^T(X)$. We will denote by $\calF_{inv}^{T}(X)$ the subgroup of $\calF^{T}(X)$ consisting of $T$-invariant constructible functions on $X$. (The group $\cF^T(X)$ also contains other elements, but this will be immaterial for us.)
\item Every proper $T$-equivariant morphism $f: Y \to X$ of algebraic varieties induces a homomorphism $f_*^T: \calF^T(X) \to \calF^T(Y)$. The restriction of $f_*^T$ to $\calF_{inv}^{T}(X)$ coincides with the ordinary push-forward $f_*$ of constructible functions. See \cite[\S 2.6]{ohmoto:eqcsm}.
\end{enumerate}

Ohmoto proves \cite[Theorem 1.1]{ohmoto:eqcsm} that there is an equivariant version of MacPherson transformation $c_*^T: \calF^T(X) \to H_*^T(X)$ that satisfies $c_*^T(\one_X) = c^T(TX) \cap [X]_T$ if $X$ is a non-singular variety, and that is functorial with respect to proper push-forwards. The last statement means that for all proper $T$-equivariant morphisms $Y\to X$ the following diagram
commutes:
$$\xymatrix{ 
\calF^T(Y) \ar[r]^{c_*^T} \ar[d]_{f_*^T} & H_*^T(Y) \ar[d]^{f_*^T} \\ 
\calF^T(X) \ar[r]^{c_*^T} & H_*^T(X).}$$ 
If $X$ is smooth, we will identify the (equivariant) homology and cohomology groups, 
by Poincar{\'e} duality: $H_*^T(X)\simeq H^*_T(X)$.

\begin{defin}
Let $Z$ be a $T$-invariant constructible subvariety of $X$. 
\begin{enumerate}
\item
We denote by $\csm(Z):=c_*^T(\one_{Z}) {~\in H_*^T(X)}$ the {\em equivariant Chern--Schwartz--MacPherson (CSM) class\/} of $Z$. 
\item 
If $X$ is smooth, we denote by $\ssm(Z):=\frac{c_*^T(\one_{Z})}{c^T(T X)} ~\in \widehat{H}^*_T(X)$ 
the {\em equivariant Segre--MacPherson (SM) class\/} of $Z$, where $\widehat{H}^*_T(X)$ is an appropriate completion
of $H^*_T(X)$. 
\end{enumerate}
\end{defin}

\subsection{Chevalley formulae for CSM and SM classes of Schubert cells}
In this section, we recall the Chevalley formula for the CSM/SM classes of the Schubert cells in the partial flag variety $G/P$,
proved in \cite{AMSS:shadows,su:quantum}.

Let $H^*_T(X)_{\loc}:=H^*_T(X)\otimes_{H^*_T(pt)} \Frac H^*_T(pt)$ denote the localization of $H^*_T(X)$, where 
$\Frac H^*_T(pt)$ is the fraction field of $H^*_T(pt)$. Then by the localization formula, CSM classes $\{\csm(X(w)^\circ)\mid w\in W^P\}$ and SM classes $\{\ssm(Y(w)^\circ)\mid w\in W^P\}$ are bases for $H_T^*(G/P)_{\loc}$, dual under the Poincar\'e pairing $\langle-,-\rangle_{G/P}$ on $H_T^*(G/P)_{\loc}$. This means that:
\begin{equation*}
\langle \ssm(Y(u)^\circ), \csm(X(w)^\circ)\rangle_{G/P}=\delta_{u,w} \textit{ for any } w,u\in W^P.
\end{equation*}
Moreover, if $\varphi_{w_0}$ denotes the automorphism of $H_T^*(G/P)$ induced by the left multiplication by $w_0$ on $G/P$,
then
\begin{equation}\label{equ:w0action}
\varphi_{w_0}(\csm(X(w)^\circ))=\csm(Y(w_0w)^\circ) \/.
\end{equation}
\begin{theorem}\cite{AMSS:shadows,su:quantum}\label{thm:chevalley} 
For any $w\in W^P$ and $\lambda\in X^*(T)_P$, the following holds in $H_T^*(G/P)$:
\begin{equation*}\label{equ:ChePX}
c_1(\calL_\lambda)\cup \csm (X(w)^\circ)=w(\lambda) \csm (X(w)^\circ)-\sum_{\alpha>0,ws_\alpha <w}\langle\lambda,\alpha^\vee\rangle \csm (X(ws_\alpha)^\circ),
\end{equation*}
and
\begin{equation*}\label{equ:cheL}
c_1(\calL_\lambda)\cup \ssm (Y(w)^\circ)=
w(\lambda) \ssm (Y(w)^\circ)-\sum_{\alpha>0,ws_\alpha >w}\langle\lambda,\alpha^\vee\rangle {\ssm (Y(ws_\alpha)^\circ)} \/.
\end{equation*}
\end{theorem}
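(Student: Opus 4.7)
\emph{Strategy.} My plan begins with the observation that the two stated identities are essentially dual to each other. Since $\{\csm(X(w)^\circ)\}_{w \in W^P}$ and $\{\ssm(Y(u)^\circ)\}_{u \in W^P}$ are Poincar\'e-dual bases of $H^*_T(G/P)_{\loc}$, and cup product with $c_1(\calL_\lambda)$ is self-adjoint under the Poincar\'e pairing $\langle -,-\rangle_{G/P}$, I will pair $\langle \ssm(Y(u)^\circ),\, c_1(\calL_\lambda)\cup \csm(X(w)^\circ)\rangle_{G/P}$ in two ways to transfer information between the CSM and SM coefficients. It therefore suffices to establish the identity on the $X$-cells; the SM identity on the $Y$-cells falls out from duality, and is compatible with $\varphi_{w_0}$ exchanging $X$- and $Y$-Schubert cells as in \eqref{equ:w0action}, which explains the opposite Bruhat direction $ws_\alpha > w$.

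\emph{Reduction to $G/B$.} For $\lambda \in X^*(T)_P$, the natural projection $\pi : G/B \to G/P$ satisfies $\pi^*\calL_\lambda = \calL_\lambda$, and $\pi^{-1}(X(w)^\circ_{G/P})$ decomposes as a disjoint union of Schubert cells $X(wu)^\circ_{G/B}$ for $u \in W_P$. I will invoke the functoriality of the equivariant MacPherson transformation under the proper map $\pi$ (Ohmoto's theorem) to express $\csm(X(w)^\circ_{G/P})$ as a push-forward of a sum of CSM classes of $X$-cells on $G/B$; the projection formula then reduces the Chevalley identity on $G/P$ to the analogous identity on $G/B$.

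\emph{Operator description on $G/B$.} The central step is a noncommutative operator description of $\csm(X(w)^\circ)$ in $H^*_T(G/B)$. I will introduce a Demazure--Lusztig-type operator $\calT_i$ on $H^*_T(G/B)$ built from the BGG--Demazure operator $\partial_i$, and prove that for any reduced expression $w = s_{i_1}\cdots s_{i_\ell}$,
\begin{equation*}
\csm(X(w)^\circ) \;=\; \calT_{i_1}\calT_{i_2}\cdots \calT_{i_\ell}\bigl(\csm(X(\id)^\circ)\bigr).
\end{equation*}
This is established by running the Bott--Samelson resolution $\theta : \mathrm{BS}(w) \to X(w)$, which is an iterated $\bbP^1$-bundle, together with a stratification of $\mathrm{BS}(w)$ compatible with the open cells of $X(w)$, and invoking the functoriality $\theta^T_* \circ c^T_* = c^T_* \circ \theta^T_*$ of the equivariant MacPherson transformation under proper push-forward.

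\emph{Conclusion and main obstacle.} Once the operator identity is in place, the Chevalley formula follows from a commutation computation in (a suitable completion of) the twisted algebra $H^*_T(pt) \rtimes W$: cup product with $c_1(\calL_\lambda)$ satisfies a relation of the shape $c_1(\calL_\lambda)\, \calT_i - \calT_i\, c_1(\calL_{s_i\lambda}) = \langle\lambda,\alpha_i^\vee\rangle\cdot(\text{correction})$, and an induction on $\ell(w)$ along a reduced expression of $w$ assembles these corrections into exactly the claimed right-hand side $w(\lambda)\csm(X(w)^\circ) - \sum_{\alpha>0,\, ws_\alpha<w}\langle\lambda,\alpha^\vee\rangle\csm(X(ws_\alpha)^\circ)$. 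The main obstacle is the operator identity for $\csm(X(w)^\circ)$ itself: since $\theta$ is only birational onto $X(w)$, the contributions of the boundary strata of $\mathrm{BS}(w)$ to the push-forward of constructible functions must be tracked carefully and fed consistently into the recursion, and this delicate bookkeeping is the technical heart of the proofs in \cite{AMSS:shadows, su:quantum}.
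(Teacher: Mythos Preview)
Your proposal is correct, but it diverges from the paper's own proof in two respects.

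First, the paper does not prove the first identity at all: it simply cites \cite[Theorem~3.7]{su:quantum} and \cite{AMSS:shadows}. Your sketch via Demazure--Lusztig operators and Bott--Samelson resolutions is essentially a recapitulation of the arguments in those references, so you are reconstructing material the paper treats as a black box. That is fine, but it is worth being aware that this is not what the paper itself does.

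Second, for the passage from the first identity to the second, the paper does \emph{not} use Poincar\'e duality between $\{\csm(X(w)^\circ)\}$ and $\{\ssm(Y(u)^\circ)\}$. Instead it applies the automorphism $\varphi_{w_0}$ directly to the first identity, using \eqref{equ:w0action} and the $W$-invariance of $c_1(\calL_\lambda)$, and then performs an explicit change of variables: writing $w_0w = zu$ with $z \in W^P$, $u \in W_P$, it sets $\beta = u\alpha$ and checks the chain of equivalences
\[
\alpha>0,\ ws_\alpha<w \;\Longleftrightarrow\; \alpha\in R^+\setminus R_P^+,\ w\alpha<0 \;\Longleftrightarrow\; \beta\in R^+\setminus R_P^+,\ zs_\beta>z,
\]
together with $\langle\lambda,\alpha^\vee\rangle=\langle\lambda,\beta^\vee\rangle$ for $\lambda\in X^*(T)_P$. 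Your duality argument is cleaner and avoids this bookkeeping entirely: once one knows $\langle\ssm(Y(u)^\circ),\csm(X(w)^\circ)\rangle_{G/P}=\delta_{u,w}$ and that cup product with $c_1(\calL_\lambda)$ is self-adjoint, the transpose of the Chevalley matrix for the $X$-cells immediately gives the formula for the $Y$-cells, with the Bruhat inequality reversing because $ws_\alpha<w$ with $u=ws_\alpha$ is the same as $us_\alpha>u$ with $w=us_\alpha$. The paper's approach, on the other hand, makes the role of the $w_0$-symmetry explicit and does not require appealing to the duality of the two bases (which in \cite{AMSS:shadows} is itself a nontrivial result). Both routes are valid; yours is shorter if the duality is taken as known.
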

\begin{proof}
The first equality follows from \cite[Theorem 3.7]{su:quantum} and \cite{AMSS:shadows}. We prove the second one. Applying $\varphi_{w_0}$ to the first equation, we get
\[c_1(\calL_\lambda)\cup \csm (X(w_0w)^\circ)=w_0w(\lambda) \csm (X(w_0w)^\circ)-\sum_{\alpha>0,ws_\alpha <w}\langle\lambda,\alpha^\vee\rangle \csm (X(w_0ws_\alpha)^\circ).\]
Here we have used \Cref{equ:w0action} and the fact that $c_1(\calL_\lambda)$ is $W$-invariant. 
Assume $w_0w=zu$ for some $z\in W^P$ and $u\in W_P$. Then $w_0ws_\alpha W_P=zus_\alpha W_P=zs_{u\alpha}W_P$, and we have the following equivalences
\begin{align*}
&\alpha>0, ws_\alpha <w\\
\Leftrightarrow&\alpha>0, w\alpha<0\\
\Leftrightarrow&\alpha\in R^+\setminus R^+_P, w\alpha<0\\
\Leftrightarrow&\beta:=u\alpha\in R^+\setminus R^+_P, z\beta>0\\
\Leftrightarrow&\beta\in R^+\setminus R^+_P, zs_\beta>z.
\end{align*}
Moreover, $\langle\lambda,\alpha^\vee\rangle=\langle\lambda,\beta^\vee\rangle$ since $\lambda\in X^*(T)_P$.
Finally, the second equation holds by the above equivalences and the fact that $\langle\lambda,\gamma^\vee\rangle=0$ for any $\gamma\in R^+_P$.
\end{proof}

\section{Segre--MacPherson Littlewood--Richardson (SMLR) coefficients}
For any $u,v,w\in W^P$, define the Segre--MacPherson Littlewood--Richardson (SMLR) coefficients 
$d_{u,w}^{v} {\in H^*_T(pt)_{\loc}}$ for the SM classes of Schubert cells by
\begin{equation}\label{equ:CSMLR}
\ssm({Y(u)^\circ}) \cup \ssm({Y(v)^\circ}) =\sum_{w\in W^P} d_{u,v}^{w}\ssm({Y(w)^\circ})\in {
H_T^*(G/P)_{\loc}} \/.
\end{equation}

A formula for the structure constants $d_{u,v}^{w}$, in terms of multiplications in the cohomology
of Bott-Samelson varieties, 
was recently obtained by the third-named author in \cite[Theorem 5.2]{su:structure}. 
In what follows we will obtain a recursive procedure to calculate the coefficients, based on the
equivariant Chevalley formula for the SM classes. Instances of this
recursion in various equivariant (quantum) cohomology 
and K theory rings,
appeared in \cite{molev.sagan:Littlewood-Richardson,knutson.tao:puzzles,
mihalcea:eqqgr,mihalcea:eqqhom,naruse2014schubert,BCMP:QKChev,naruse2019skew}. 

We start with the following simple lemma.
\begin{lemma}\label{lem:CSMLR} The following properties hold
for the SMLR coefficients $d_{u,v}^w$: 
\begin{center}
\begin{itemize}
\item[(a)] If $d_{u,v}^{w}\neq 0$, then $u\leq w$
and $v\leq w$.
\item[(b)] $d_{u,v}^{v}= \ssm({Y(u)^\circ})|_v$.
\end{itemize}
\end{center}
\end{lemma}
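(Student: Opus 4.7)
The plan is to extract both statements from localization/support properties of SM classes of Schubert cells in $H^*_T(G/P)_{\loc}$. Two inputs will drive everything. First, since $\csm(Y(u)^\circ) = c^T_*(\one_{Y(u)^\circ})$ is supported on the closure $Y(u)$ by naturality of the MacPherson transformation, and since $wP \in Y(u)$ iff $u \le w$ in $W^P$, the localization $\csm(Y(u)^\circ)|_w$ vanishes whenever $u \not\le w$; dividing by $c^T(T(G/P))|_w$, a product of nonzero weights and hence a unit in $H^*_T(pt)_{\loc}$, the same vanishing transfers to $\ssm(Y(u)^\circ)|_w$. Second, $\ssm(Y(w)^\circ)|_w \ne 0$: at $wP$ the cell $Y(w)^\circ$ is smooth, so $\csm(Y(w)^\circ)|_w$ equals an equivariant Euler class of conormal directions — again a product of nonzero $T$-weights — and the Segre rescaling preserves nonvanishing.

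For part (a), I would run a minimal-counterexample argument. Assume $w$ is minimal in Bruhat order such that $d_{u,v}^w \ne 0$ but $u \not\le w$. Restricting \eqref{equ:CSMLR} to the fixed point $w$, the left-hand side vanishes by the first input, while on the right only indices $w' \le w$ can contribute by the support vanishing of $\ssm(Y(w')^\circ)|_w$. Minimality forces $u \le w'$ — and hence $u \le w$, a contradiction — for every strict $w' < w$ with $d_{u,v}^{w'} \ne 0$, so all such terms vanish. What remains is $d_{u,v}^w \, \ssm(Y(w)^\circ)|_w = 0$, and nonvanishing of $\ssm(Y(w)^\circ)|_w$ yields $d_{u,v}^w = 0$, a contradiction. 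The symmetric argument (swap $u$ and $v$) delivers $v \le w$.

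For part (b), I would simply localize \eqref{equ:CSMLR} at $v$. By part (a), only indices $w \ge u, v$ can contribute on the right, while the support vanishing of $\ssm(Y(w)^\circ)|_v$ requires $w \le v$; together these pin down $w = v$. Thus
\[
\ssm(Y(u)^\circ)|_v \cdot \ssm(Y(v)^\circ)|_v \;=\; d_{u,v}^v \cdot \ssm(Y(v)^\circ)|_v,
\]
and cancelling the nonzero factor $\ssm(Y(v)^\circ)|_v$ gives $d_{u,v}^v = \ssm(Y(u)^\circ)|_v$. The only point requiring some care is to keep the whole argument honest with respect to the completion $\widehat{H}^*_T(G/P)$ in which SM classes a priori live, and its localization where the Schubert cell SM classes form a basis; but once one evaluates at fixed points, everything lands inside $H^*_T(pt)_{\loc}$ where the manipulations above are unambiguous.
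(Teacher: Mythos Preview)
Your proof is correct and follows essentially the same localization approach as the paper: restrict \eqref{equ:CSMLR} to the fixed point $wP$ (for part (a)) and to $vP$ (for part (b)), and exploit the support property $\ssm(Y(u)^\circ)|_w = 0$ unless $u \le w$ together with the nonvanishing of the diagonal localizations. The paper's proof is much terser and leaves implicit the triangularity step that you spell out via the minimal-counterexample argument, but the content is the same.
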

\begin{proof} To prove (a), localize both sides at the fixed point $wP$, and observe
that \[ \ssm({Y(u)^\circ})|_{w}=0 \] unless $u\leq w$. Part (b) follows again by localization, 
after restricting both sides of \Cref{equ:CSMLR} to the fixed point $vP$.
\end{proof}

We record the following explicit localization formula for $\ssm({Y(u)^\circ})|_v$, generalizing the one in the complete flag variety case \cite[Corollary 6.7]{AMSS:shadows}.
\begin{prop}\label{prop:loc}
Let $u\leq v\in W^P$.
Fix a reduced expression $v=s_{i_1} s_{i_2} \cdots s_{i_\ell}$
and set $\beta_j=s_{i_1} s_{i_2} \cdots s_{i_{j-1}}(\alpha_{i_j})$
$(j=1,2,\ldots,\ell)$.
Then
$$\ssm (Y(u)^\circ)|_v=\frac{1}{\prod_{1\leq j\leq \ell}(1+\beta_j)}
\sum \beta_{j_1}\beta_{j_2}\cdots \beta_{j_k},$$
where the summation is over $ 1\leq j_1<j_2<\cdots j_k\leq \ell$ such that 
$s_{i_{j_1}}s_{i_{j_2}}\cdots s_{i_{j_k}}W_P=uW_P$.
\end{prop}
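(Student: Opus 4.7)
The idea is to reduce to the complete flag variety case \cite[Corollary 6.7]{AMSS:shadows} by pulling back along the smooth projection $\pi\colon G/B\to G/P$. First I will establish the identity
\[
\pi^{*}\ssm_{G/P}(Y(u)^\circ) \;=\; \sum_{w\in uW_P}\ssm_{G/B}(Y(w)^\circ)
\]
in the appropriate completion of equivariant cohomology. Then localization at the fixed point $v\in W^P\subset W$, using that $\pi(vB)=vP$ and therefore $(\pi^{*}\kappa)|_{v}=\kappa|_{v}$, reduces the required formula on $G/P$ to a sum of the analogous formulas on $G/B$, one for each element of the coset $uW_P$.

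The key ingredient for the pullback identity is the compatibility of Ohmoto's equivariant MacPherson transformation with smooth pullback: for any $T$-invariant constructible subset $Z\subset G/P$,
\[
\csm\bigl(\pi^{-1}(Z)\bigr) \;=\; c^{T}(T_\pi)\cap \pi^{*}\csm(Z),
\]
where $T_\pi$ is the relative tangent bundle of $\pi$. Dividing both sides by $c^{T}(T(G/B))=c^{T}(T_\pi)\cdot \pi^{*}c^{T}(T(G/P))$, which follows from the relative tangent sequence and multiplicativity of the total Chern class, yields $\ssm_{G/B}(\pi^{-1}(Z))=\pi^{*}\ssm_{G/P}(Z)$. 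I then decompose $\pi^{-1}(Y(u)^\circ)=B^{-}uP/B$ into Schubert cells: this set is $B^{-}$-stable, and since the $B^{-}$-orbits on $G/B$ are precisely the Schubert cells $Y(w)^\circ$, an orbit/dimension count gives
\[
\pi^{-1}(Y(u)^\circ) \;=\; \bigsqcup_{w\in uW_P} Y(w)^\circ.
\]
Additivity of $\csm$ (and hence $\ssm$) on disjoint constructible decompositions then yields the pullback identity.

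With the pullback identity in place, the proof concludes by applying \cite[Corollary 6.7]{AMSS:shadows} to each $\ssm_{G/B}(Y(w)^\circ)|_v$. Because $v\in W^P$, the given reduced expression of $v$ in $W^P$ is also reduced in $W$, and the positive roots $\beta_j$ agree with those appearing in the complete flag variety formula. The subword condition $s_{i_{j_1}}\cdots s_{i_{j_k}}=w$, ranged over $w\in uW_P$, collapses to the single condition $s_{i_{j_1}}\cdots s_{i_{j_k}}W_P=uW_P$, producing the stated formula. The main technical step is the smooth pullback formula for equivariant CSM classes; while the non-equivariant analogue is classical and the equivariant version is implicit in \cite{ohmoto:eqcsm}, some care is needed to verify it cleanly for the smooth fibration $\pi\colon G/B\to G/P$, and this is the step I expect to require the most attention.
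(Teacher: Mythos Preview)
Your proposal is correct and follows essentially the same route as the paper's proof: pull back along $p\colon G/B\to G/P$, use the identity $p^{*}\ssm(Y(u)^\circ)=\sum_{z\in W_P}\ssm(Y(uz)_B^\circ)$, restrict to the fixed point $vB$, and apply \cite[Corollary~6.7]{AMSS:shadows}. The only difference is that the paper cites the pullback identity directly as \cite[Equation~(37)]{AMSS:shadows}, whereas you rederive it from the smooth-pullback (Verdier--Riemann--Roch) property of CSM classes together with the cell decomposition $p^{-1}(Y(u)^\circ)=\bigsqcup_{z\in W_P}Y(uz)_B^\circ$; both are fine, and your concern about the equivariant smooth-pullback formula is unnecessary since it is already used elsewhere in the paper and in \cite{AMSS:shadows}.
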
 
\begin{proof}
Let $p:G/B\rightarrow G/P$ be the natural projection. In this proof, we use $Y(w)^\circ_B:=B^-wB/B\subset G/B$ to denote the Schubert cells in $G/B$. 
By \cite[Equation (37)]{AMSS:shadows}, we have
\[p^*(\ssm (Y(u)^\circ))=\sum_{z\in W_P}\ssm (Y(uz)_B^\circ).\]
Restricting both sides to the fixed point $vB\in G/B$, we get
\begin{align*}
\ssm (Y(u)^\circ)|_{v}&=p^*(\ssm (Y(u)^\circ))|_{vB}\\
&=\sum_{z\in W_P}\ssm (Y(uz)_B^\circ)|_{vB}\\
&=\frac{\prod_{\alpha>0,v\alpha>0}(1-v\alpha)}{\prod_{\alpha>0}(1-v\alpha)}\sum \beta_{j_1}\beta_{j_2}\cdots \beta_{j_k}\\
&=\frac{1}{\prod_{1\leq j\leq \ell}(1+\beta_j)}
\sum \beta_{j_1}\beta_{j_2}\cdots \beta_{j_k}.
\end{align*}
Here the third equality follows from \cite[Corollary 6.7]{AMSS:shadows}, and the summations in the last two lines are over $ 1\leq j_1<j_2<\cdots j_k\leq \ell$ such that 
$s_{i_{j_1}}s_{i_{j_2}}\cdots s_{i_{j_k}}W_P=uW_P$.
\end{proof}

For any $\lambda\in X^*(T)_P$ and $u,w\in W^P$, define the Chevalley coefficients $c^u_{\lambda,w}$ by the equation
\begin{equation}\label{equ:ccoeff}
c_1(\calL_\lambda)\cup \ssm (Y(w)^\circ)=
\sum_{u\in W^P} c_{\lambda,w}^{u} \ssm (Y(u)^\circ).
\end{equation}
These coefficients are calculated in \Cref{thm:chevalley}. The following is 
our main recursion formula for $d_{u,v}^w$.
\begin{prop}
\label{lem:recLR}
For any $u,v,w\in W^P$ and any weight $\lambda\in X^*(T)_P$, 
the following holds:
$$
(c^w_{\lambda,w} - c^u_{\lambda,u}) d^w_{u,v}=
\sum_{\substack{u< x}} 
c^x_{\lambda,u}d^w_{x,v}-\sum_{\substack{ y<w}} c^w_{\lambda,y} d^y_{u,v} \/.
$$
\end{prop}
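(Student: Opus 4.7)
The plan is to compute the triple product $c_1(\calL_\lambda) \cup \ssm(Y(u)^\circ) \cup \ssm(Y(v)^\circ)$ in two different ways using associativity, and then equate the coefficients of $\ssm(Y(w)^\circ)$ in the resulting SM-basis expansions. Because we work in finite type, $W^P$ is finite and all sums are finite, so there is no convergence issue in the completion $\widehat{H}^*_T(G/P)$.

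First I would associate the triple product as $\bigl(c_1(\calL_\lambda) \cup \ssm(Y(u)^\circ)\bigr) \cup \ssm(Y(v)^\circ)$: expanding the bracketed factor via the Chevalley definition \eqref{equ:ccoeff} and then applying the SMLR definition \eqref{equ:CSMLR} to each resulting product $\ssm(Y(x)^\circ) \cup \ssm(Y(v)^\circ)$ yields the coefficient $\sum_{x} c^x_{\lambda,u}\, d^w_{x,v}$ in front of $\ssm(Y(w)^\circ)$. Alternatively, the association $c_1(\calL_\lambda) \cup \bigl(\ssm(Y(u)^\circ) \cup \ssm(Y(v)^\circ)\bigr)$, using \eqref{equ:CSMLR} first and then \eqref{equ:ccoeff}, produces the coefficient $\sum_{y} c^w_{\lambda,y}\, d^y_{u,v}$. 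Equating the two and moving the diagonal contributions $x=u$ and $y=w$ to one side gives
\[
(c^w_{\lambda,w} - c^u_{\lambda,u})\, d^w_{u,v} = \sum_{x \neq u} c^x_{\lambda,u}\, d^w_{x,v} - \sum_{y \neq w} c^w_{\lambda,y}\, d^y_{u,v}.
\]

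To match the summation ranges in the statement, I would then invoke \Cref{thm:chevalley}: the Chevalley coefficient $c^x_{\lambda,u}$ is supported on $x = u$ together with $x = u s_\alpha$ for positive roots $\alpha$ with $u s_\alpha > u$, so $x \neq u$ forces $u < x$; symmetrically, $c^w_{\lambda,y}$ is supported on $y=w$ together with $y$ such that $w = y s_\alpha$ and $w > y$, so $y \neq w$ forces $y < w$. Substituting these Bruhat restrictions into the off-diagonal sums yields the claimed identity.

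There is no genuine obstacle: the proof is essentially a one-line associativity calculation together with careful bookkeeping of the support of the Chevalley coefficients, which is dictated by \Cref{thm:chevalley}. The main sanity check is simply that the SM classes form a basis of $H^*_T(G/P)_{\loc}$, which is what justifies reading off the coefficients of $\ssm(Y(w)^\circ)$ on both sides.
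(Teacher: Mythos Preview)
Your proof is correct and follows essentially the same route as the paper: both compute $c_1(\calL_\lambda)\cup\ssm(Y(u)^\circ)\cup\ssm(Y(v)^\circ)$ in two ways via associativity, expand using \eqref{equ:ccoeff} and \eqref{equ:CSMLR}, and compare coefficients of $\ssm(Y(w)^\circ)$. The only cosmetic difference is that the paper also cites \Cref{lem:CSMLR} alongside \Cref{thm:chevalley} when restricting the summation ranges, but your justification via the support of the Chevalley coefficients alone already suffices.
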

\begin{proof}
By the associativity of the cup product, we have
\[
\bigg( c_1(\calL_\lambda) \cup \ssm({Y(u)^\circ}) \bigg)\cup \ssm({Y(v)^\circ})
 =c_1(\calL_\lambda) \cup \bigg( \ssm({Y(u)^\circ})\cup \ssm({Y(v)^\circ}) \bigg).
\]
By \Cref{equ:CSMLR}, \Cref{equ:ccoeff} and \Cref{lem:CSMLR},
taking the coefficients of $\ssm({Y(w)^\circ})$ in both sides of the above equation gives
$$c^u_{\lambda,u} d^w_{u,v}+\sum_{\substack{u < x}}
c^x_{\lambda,u} d^w_{x,v}=
c^w_{\lambda,w} d^w_{u,v}+\sum_{\substack{y< w}}
c^w_{\lambda,y} d^y_{u,v}.$$
This proves the desired equality.
\end{proof}
Note that by \Cref{thm:chevalley}, the Chevalley coefficient $c_{\lambda, x}^{y} \neq 0$ only if $x \le y$.
Furthermore, if $u \neq w$, it was proved in \cite[Proposition A.3]{mihalcea:eqqhom} that
one can find $\lambda$ such that $c^w_{\lambda,w} \neq c^u_{\lambda,u}$.
Thus, the identity in \Cref{lem:recLR} may be rewritten as:
\begin{equation}
\label{eq:recLR}
d^w_{u,v}=\frac{ 1 }{ c^w_{\lambda,w} - c^u_{\lambda,u}}\left( \sum_{\substack{ u < x}} 
c^x_{\lambda,u}d^w_{x,v} -\sum_{\substack{ y < w}} c^w_{\lambda,y} d^y_{u,v}\right) \/.
\end{equation}
Then the same argument as the one from \cite{mihalcea:eqqhom} shows that 
\Cref{eq:recLR} gives a recursive relation for the 
coefficients $d_{u,v}^w$. We briefly recall the salient points. 
If $u=w$, then the coefficient $d_{u,v}^u$ is known from 
\Cref{lem:CSMLR} and \Cref{prop:loc}. If
$u \neq w$, we may choose $\lambda$ (depending on $u,w$) such that
$c^w_{\lambda,w} - c^u_{\lambda,u} \neq 0$. The
coefficients $d_{x,v}^w$ and $d_{u,v}^y$ on the right hand side of
\Cref{eq:recLR} satisfy $\ell(w)-\ell(x), \ell(y) - \ell(u) < \ell(w)- \ell(u)$.
To conclude, an inductive argument on $u \le w$ and $\ell(w) - \ell(u)$ shows that:
\begin{corol}\label{cor:SMLR} The coefficients $d_{u,v}^w$ are algorithmically determined
by the following:
\begin{itemize}
\item $d_{u,v}^w =0$ for $u \nleq w$;

\item $d_{u,v}^u$ (known from \Cref{lem:CSMLR} and \Cref{prop:loc});

\item For $u < w$, the coefficients $d_{u,v}^w$ are determined recursively from
\Cref{eq:recLR}, in terms of coefficients
$d_{u',v}^{w'}$ where $u \le u'$, $v' \le w$ and $\ell(v') - \ell(u') < \ell(w) - \ell(u)$. 
\end{itemize}
\end{corol}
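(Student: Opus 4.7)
The plan is to verify the three bullets in turn, treating the first two as base cases and handling the third by induction on $\ell(w)-\ell(u)$ using the recursion \Cref{eq:recLR} derived from \Cref{lem:recLR}.

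First I would dispatch the two base cases, which are essentially free. The first bullet, $d_{u,v}^w=0$ for $u\nleq w$, is exactly \Cref{lem:CSMLR}(a). For the second bullet, \Cref{lem:CSMLR}(b) identifies the diagonal coefficient $d_{u,v}^v$ with the localization $\ssm(Y(u)^\circ)|_v$, and \Cref{prop:loc} provides an explicit closed formula for that localization in terms of any chosen reduced expression for $v$. Together these make $d_{u,v}^u$ (and in fact $d_{u,v}^v$ for all $v\geq u$) directly computable.

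For the third bullet I would set up induction on $n := \ell(w)-\ell(u) \geq 1$. The inductive step requires that the recursion \Cref{eq:recLR} actually be valid, i.e.\ that I can produce a weight $\lambda \in X^*(T)_P$ with $c^w_{\lambda,w}-c^u_{\lambda,u}\neq 0$. Now the first Chevalley identity in \Cref{thm:chevalley} shows that $c^x_{\lambda,x}=x(\lambda)$ for every $x\in W^P$, so what is needed is $w(\lambda)\neq u(\lambda)$. Since $u\neq w$ as elements of $W^P$, such a separating $\lambda$ inside the sublattice $X^*(T)_P$ exists by Proposition A.3 of \cite{mihalcea:eqqhom}; I would simply invoke this. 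Dividing through by $c^w_{\lambda,w}-c^u_{\lambda,u}$ in \Cref{lem:recLR} then produces \Cref{eq:recLR}.

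The key observation making the induction close is the length analysis on the right-hand side of \Cref{eq:recLR}. By \Cref{thm:chevalley}, the Chevalley coefficient $c^x_{\lambda,u}$ is nonzero only when $x\geq u$; combined with the summation condition $u<x$, every $d^w_{x,v}$ appearing in the first sum satisfies $\ell(w)-\ell(x)<\ell(w)-\ell(u)=n$. Symmetrically, $c^w_{\lambda,y}\neq 0$ forces $y\leq w$, so with $y<w$ every $d^y_{u,v}$ in the second sum satisfies $\ell(y)-\ell(u)<n$. Thus all coefficients on the right have strictly smaller length gap, and the inductive hypothesis applies. The main subtle point here is not an obstacle but a bookkeeping one: I would have to verify that the coefficients $d^w_{x,v}$ appearing in the recursion are themselves covered by the inductive hypothesis even when $x\not\leq v$, which is fine because the first bullet forces them to vanish in that case. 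Combining the three bullets yields an algorithm.
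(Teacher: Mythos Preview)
Your argument is essentially identical to the paper's: the text immediately preceding the corollary already lays out the induction on $\ell(w)-\ell(u)$, the appeal to \cite[Proposition A.3]{mihalcea:eqqhom} for a separating $\lambda$, and the length bounds on the right-hand side of \eqref{eq:recLR}. One small slip: it is the \emph{second} identity in \Cref{thm:chevalley} (the one for SM classes) that gives $c^x_{\lambda,x}=x(\lambda)$, and your closing remark about ``$x\not\leq v$'' is irrelevant (the vanishing you need is $d^w_{x,v}=0$ when $x\nleq w$, not $x\nleq v$); neither affects the validity of the argument.
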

An algorithm to calculate $d_{u,v}^u$, in terms of paths on a decorated version of the Bruhat graph of
$G/P$, will be discussed in \S \ref{sec:LambdaBruhat} below.

\section{Equivariant multiplicities} \label{sec:equimult}

\subsection{Definition and a smoothness criterion} Let $X$ be a scheme with a torus action $T$, and $p \in X$ a
torus fixed point. Following \cite[\S 4]{brion:eq-chow}, we say that $p$ is non-degenerate if $0$ is not a weight
of the (Zariski) tangent space $T_p X$. The set of weights of $T_p X$ will be called the weights of $p$. 
In \cite[\S 4.2]{brion:eq-chow}, Brion defined an equivariant 
multiplicity $e_p(\kappa)$ for a (Borel-Moore) equivariant homology class 
$\kappa \in H_*^T(X)$ at a non-degenerate torus fixed point $p$. The goal of this section is to prove 
\Cref{thm:csmsmooth} which provides a smoothness criterion utilizing equivariant multiplicities and CSM
classes; it generalizes a similar criterion
by Brion, who utilizes fundamental classes. This will be needed to identify the right hand side of the \Cref{intro:SMfrac} 
with an equivariant multiplicity.

We fix some notation. For a $T$-module $V = \bigoplus_{i=1}^n \C_{\chi_i}$ denote by $\Phi(V) = \{ \chi_i : 1 \le i \le n\}$ the set of its weights. The {\em Chern class} of $V$ is defined by $c^T(V) := \prod_{i=1}^n (1+ \chi_i)$. 
The {\em Euler class} of $V$ is defined by $e^T(V) := \prod_{i=1}^n \chi_i$; these 
as elements of $H^*_T(pt)$. 

Next we recall the definition of the equivariant multiplicity from \cite[\S 4.2]{brion:eq-chow}.
Recall that $H^*_T(pt)_{\loc} $ denotes the fraction field of $H^*_T(pt)$. 

\begin{theorem}[Brion]\label{thm:def-eq-m}  Let $p \in X$ be a non-degenerate fixed point and let $\chi_1, 
\ldots, \chi_n$ be its weights. 

(a) There exists a unique $H^*_T(pt)$-linear map $e_{p,X}: H_*^T(X) \to H^*_T(pt)_{\loc}$ such that 
\[e_{p,X}[p]_T =1 \textrm{ and } e_{p,X}[Y]_T = 0 \]
for any $T$-invariant subvariety $Y \subset X$ with $p \notin Y$. Furthermore, the image $Im(e_{p,X})$ is 
contained in {$\frac{1}{\chi_1 \cdot \ldots \cdot \chi_n} H^*_T(pt)$}.

(b) For any $T$-invariant subvariety $Y \subset X$, the rational function $e_{p,X}[Y]_T$
is homogeneous of degree $-\dim Y$ and it coincides with $e_{p,Y}[Y]_T$.

(c) The point $p$ is non-singular in $X$ if and only if
\[ e_{p,X}[X]_T =\frac{1}{\chi_1 \cdot \ldots \cdot \chi_n} \/. \]

\end{theorem}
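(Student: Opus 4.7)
The plan is to reduce everything to a local model at $p$ inside a smooth ambient $T$-variety, and then use equivariant Poincar\'e duality together with Atiyah--Bott--Borel localization. Because $p$ is non-degenerate, no trivial character appears among the weights $\chi_1,\ldots,\chi_n$ of $T_pX$, and there is a $T$-invariant affine open neighborhood $U\subset X$ containing $p$ (Sumihiro-type argument). Choosing $T$-weight representatives of a basis of $\mathfrak m_p/\mathfrak m_p^2$ and lifting to $T$-weight generators of $\mathfrak m_p$ yields a $T$-equivariant closed embedding $j:U\hookrightarrow V$ where $V$ is a $T$-module of dimension $n$ with weights $\chi_1,\ldots,\chi_n$. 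By excision, classes in $H_*^T(X)$ supported near $p$ are captured by $H_*^T(U)$, and under localization at $p$ the classes supported on the complement of $p$ vanish; so it suffices to construct $e_{p,X}$ on $H_*^T(V)$ and pull back.

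For part (a), the existence of $e_{p,X}$ will come from the following recipe on generators $[Y]_T$: if $p\notin Y$, declare $e_{p,X}[Y]_T=0$; if $p\in Y$, push $[Y]_T$ forward to $H_*^T(V)$ via $j_*$, identify $H_*^T(V)\cong H^*_T(pt)$ via the equivariant Poincar\'e duality on the smooth $T$-module $V$, and divide by the Euler class $e^T(V)=\chi_1\cdots\chi_n$. This plainly lands in $\tfrac{1}{\chi_1\cdots\chi_n}H^*_T(pt)$. For uniqueness, after inverting $\chi_1,\ldots,\chi_n$ the localization theorem identifies $H_*^T(X)_{\loc}$ (mod classes supported away from $p$) with the free $H^*_T(pt)_{\loc}$-module on $[p]_T$, so the normalization $e_{p,X}[p]_T=1$, the vanishing property, and $H^*_T(pt)$-linearity force the map.

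For (b), homogeneity is a degree computation: a class in $H_{2\dim Y}^{BM,T}(V)$ corresponds under Poincar\'e duality on $V$ to an element of $H^{2n-2\dim Y}_T(pt)$, and division by $e^T(V)\in H^{2n}_T(pt)$ produces a rational function of degree $-2\dim Y$ in $H^*_T(pt)_{\loc}$, i.e.~degree $-\dim Y$ in the standard normalization. The identity $e_{p,X}[Y]_T=e_{p,Y}[Y]_T$ is then transparent, since $e_{p,Y}$ is built from the composite embedding $Y\hookrightarrow U\hookrightarrow V$, yielding the same computation.

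For (c), if $X$ is non-singular at $p$ then the completed local ring $\widehat{\cO}_{X,p}$ is $T$-equivariantly isomorphic to $\widehat{\cO}_{V,0}$, so $j_*[X]_T$ and $[V]_T$ agree after restriction to a $T$-invariant formal neighborhood of $0\in V$, giving $e_{p,X}[X]_T=1/(\chi_1\cdots\chi_n)$. The converse is the main obstacle, and this is where I expect the most work: one must show that $e_{p,X}[X]_T\cdot\chi_1\cdots\chi_n$ coincides with the equivariant Hilbert--Samuel multiplicity of $\cO_{X,p}$, which equals $1$ if and only if the local ring is regular. Following the Joseph--Rossmann approach used by Brion, this identification proceeds by expressing $e_{p,X}[X]_T$ as the leading term of the Hilbert series of $\cO_{X,p}$ graded by the $T$-weights, and then invoking the algebraic criterion that multiplicity one characterizes regularity of a local ring with reduced tangent cone of the expected dimension.
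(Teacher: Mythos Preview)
The paper does not prove this theorem: it is stated as a result of Brion, cited to \cite[\S 4.2]{brion:eq-chow}, and used as a black box throughout \S\ref{sec:equimult}. There is therefore no ``paper's own proof'' to compare your proposal against.

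That said, your outline is broadly consonant with Brion's original argument in \cite{brion:eq-chow}. A few remarks. For part (a), your construction is essentially correct but the uniqueness argument is slightly loose: you should verify that classes $[Y]_T$ with $p\in Y$ and those with $p\notin Y$ together generate $H_*^T(X)$ over $H_T^*(pt)$, which is what makes the two normalizations suffice. For part (b), the compatibility $e_{p,X}[Y]_T=e_{p,Y}[Y]_T$ requires a bit more care than ``the same computation,'' since the tangent space $T_pY$ may differ from $T_pX$ and hence the ambient linear models $V$ differ; the point is that the Euler class of the normal directions cancels. For part (c), you have correctly identified the converse as the substantive step, and the Hilbert-series/Hilbert--Samuel approach you sketch is indeed what Brion uses; note however that the criterion ``multiplicity one implies regular'' in general requires the local ring to be unmixed (or Cohen--Macaulay), so one should either check this or use the stronger fact that the tangent cone has the same dimension as $X$ at $p$ (which follows from non-degeneracy and the weight count $n=\dim T_pX$).
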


\begin{corol}\label{cor:eqmultsmooth} Let $X$ be a scheme with a torus action $T$, and let 
$p \in X$ be a non-singular, non-degenerate, torus fixed point. Then the equivariant multiplicity equals
\[ e_{p,X}(\kappa) = \frac{\kappa|_p}{e^T (T_p X) } \quad \forall \kappa \in H^*_T(X) \/. \]
\end{corol}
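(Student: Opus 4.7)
The plan is to combine the characterization of the equivariant multiplicity in \Cref{thm:def-eq-m} (especially the uniqueness in part (a)) with the classical Atiyah--Bott--Berline--Vergne localization formula, after reducing to a smooth $T$-invariant open neighborhood of $p$. Since $e_{p,X}$ is defined on equivariant Borel--Moore homology, I would first fix the convention that $e_{p,X}(\kappa)$ for $\kappa \in H^*_T(X)$ means $e_{p,X}(\kappa \cap [X]_T)$, the identification being canonical near the smooth point $p$ via Poincar\'e duality.

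First I would pass to a smooth neighborhood. Because $p$ is non-singular in $X$, there exists a $T$-invariant Zariski-open subset $U \subset X$ containing $p$ on which $X$ is smooth, and its complement $X \setminus U$ is a finite union of $T$-invariant closed subvarieties not meeting $p$. By \Cref{thm:def-eq-m}(a) the map $e_{p,X}$ vanishes on every class supported in such a complement; combined with the excision sequence for equivariant Borel--Moore homology $H_*^T(X\setminus U) \to H_*^T(X) \to H_*^T(U) \to 0$, this yields $e_{p,X}(\alpha) = e_{p,U}(\alpha|_U)$ for all $\alpha \in H_*^T(X)$. Together with the identifications $T_pU = T_pX$ and $\kappa|_p = (\kappa|_U)|_p$, this reduces the problem to the case $X = U$ smooth.

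Second, on the smooth variety $U$ I would apply Atiyah--Bott--Berline--Vergne: after inverting the nonzero characters of $T$, every $\kappa \in H^*_T(U)$ satisfies
\[
\kappa \cap [U]_T \;=\; \sum_{q \in U^T} (i_q)_*\, \frac{\kappa|_q}{e^T(T_qU)},
\]
where $i_q : \{q\} \hookrightarrow U$ is the inclusion of a fixed point. Applying $e_{p,U}$ to both sides, the defining properties of \Cref{thm:def-eq-m}(a) (which sends $(i_p)_*[p]_T$ to $1$ and kills $(i_q)_*[q]_T$ for $q\neq p$) extract exactly the $q=p$ summand, giving $e_{p,U}(\kappa \cap [U]_T) = \kappa|_p / e^T(T_pU)$. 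Combined with the previous reduction, this yields the desired formula.

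The main obstacle, as I see it, is the careful bookkeeping around interpreting $e_{p,X}(\kappa)$ for a cohomology class $\kappa$ and its compatibility with restriction to the smooth locus $U$; the reduction $e_{p,X}(\alpha) = e_{p,U}(\alpha|_U)$ needs to be verified using only the uniqueness in \Cref{thm:def-eq-m}(a), not appealing to any extra structure. Once that bookkeeping is in place, the statement is an immediate consequence of equivariant localization on the smooth neighborhood, and indeed refines \Cref{thm:def-eq-m}(c), which is the special case $\kappa = 1$.
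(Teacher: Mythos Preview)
Your approach is correct, but it is more elaborate than the paper's. The paper does not invoke the Atiyah--Bott--Berline--Vergne formula or pass through excision; instead it directly verifies that the map $\kappa \mapsto \kappa|_p / e^T(T_pX)$ satisfies the two characterizing properties of $e_{p,X}$ in \Cref{thm:def-eq-m}(a). The vanishing $[Y]_T \mapsto 0$ for $p \notin Y$ is immediate since then $[Y]_T|_p = 0$. For $[p]_T \mapsto 1$ the paper computes $[p]_T|_p = e^T(T_pX)$ via the self-intersection formula, observing that $\iota_p : \{p\} \hookrightarrow X$ is an l.c.i.\ morphism because it factors through a smooth $T$-invariant open neighborhood of $p$. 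Uniqueness in \Cref{thm:def-eq-m}(a) then finishes the argument in one line. Your route through localization on a smooth open $U$ is conceptually appealing---it exhibits the formula as the $p$-summand of the fixed-point expansion---but it requires the extra bookkeeping you flagged (the compatibility $e_{p,X}(\alpha) = e_{p,U}(\alpha|_U)$), whereas the paper's argument needs the smooth neighborhood only to justify the single self-intersection computation.

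One small point to tighten: your displayed ABBV identity presumes $U^T$ is finite with every fixed point non-degenerate, but the hypotheses guarantee this only at $p$. Since $p$ is non-degenerate it is isolated in $X^T$, so $U^T \setminus \{p\}$ is closed in $U$ and $T$-stable; replacing $U$ by $U \setminus (U^T \setminus \{p\})$ yields a smooth $T$-invariant open with $p$ as its unique fixed point, and your argument then goes through cleanly.
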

\begin{proof} Let $\iota_p : \{ p \} \to X$ denote the embedding. We may 
may find a $T$-invariant neighborhood $V$ of $x$ such that $V$ is smooth.
(For instance, $V = X \setminus X_{sing}$, the complement of the singular locus of $X$.)
Using the terminology from \cite{fulton:IT}, the morphism $\iota_p$ is an l.c.i. morphism (it is the 
composition of the regular embedding $\{ p \} \hookrightarrow V$ and the open embedding 
$V \subset X$). 
Then by the self intersection formula $[p]|_p= \iota_p^* (\iota_p)_*[p]_T = e^T (T_p X)[p]_T$ in $H_*^T(pt)$.
When regarded in the equivariant cohomology, this shows that $[p]|_p/e^T (T_p X) = 1$.
Then the statement follows from the part (a) of \Cref{thm:def-eq-m}.
\end{proof}

We will need the following homological analogue of \cite[Lemma 9.3]{AMSS:motivic}.
\begin{lemma}\label{lemma:locssm} Let $X \subset M$ be a $T$-equivariant closed embedding of 
irreducible $T$-varieties. Let $p \in X$ be a $T$-fixed point, non-degenerate and smooth in $X$ and $M$.
Denote by $N_{p,X} M$ the normal space
of $X$ in $M$ at $p$. Then:
\[ \iota^*_p \csm(X) = c^T(T_pX) \cdot e^T(N_{p,X} M)~\/. \] 
\end{lemma}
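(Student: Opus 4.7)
The plan is to exploit the locality of the statement around $p$ and reduce to the situation where both the ambient variety and its subvariety are smooth, so that Poincar\'e duality and the self-intersection formula for Gysin pushforwards apply cleanly.

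First I would replace $M$ with a suitable $T$-invariant open neighborhood $U$ of $p$. Because $p$ is smooth in both $X$ and $M$, the singular loci $M_{\sing}$ and $X_{\sing}$ are closed $T$-invariant proper subsets not containing the fixed point $p$, so $U:=M\setminus(M_{\sing}\cup i(X_{\sing}))$ is a $T$-invariant open neighborhood of $p$ for which both $U$ and $Y:=X\cap U$ are smooth. The open pullback on equivariant constructible functions sends $\one_X$ to $\one_Y$ and commutes with $c^T_*$, hence $\csm(Y)$ equals the restriction of $\csm(X)$ to $U$; moreover $\iota_p^*$ factors through $H^*_T(U)$ and the normal space $N_{p,X}M$ is unchanged. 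Therefore it suffices to prove the lemma assuming $M$ and $X$ are both smooth and irreducible.

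Under this assumption, by definition of the CSM class in the smooth case we have $\csm(X) = c^T(TX)\cap [X]_T$ in $H_*^T(X)$. Pushing forward along the closed embedding $i\colon X\hookrightarrow M$ and identifying homology with cohomology via Poincar\'e duality on the smooth variety $M$, the class $i_*\csm(X)\in H^*_T(M)$ is represented by the Gysin pushforward $i_*^{\mathrm{coh}}\bigl(c^T(TX)\bigr)$. The self-intersection formula for the regular closed embedding $i$ of smooth $T$-varieties gives, for any $\alpha\in H^*_T(X)$,
\[
i^*\, i_*^{\mathrm{coh}}(\alpha)\;=\;e^T(N_{X/M})\cdot \alpha\quad\in\;H^*_T(X).
\]
Applied to $\alpha=c^T(TX)$, this yields $i^*\csm(X) = e^T(N_{X/M})\cdot c^T(TX)$ in $H^*_T(X)$.

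To conclude, factor the inclusion of the fixed point as $\{p\}\xrightarrow{j_p} X\xrightarrow{i} M$, so that $\iota_p^* = j_p^*\circ i^*$. Applying $j_p^*$ to the previous identity and using that the restrictions to $p$ of $TX$ and $N_{X/M}$ are respectively $T_p X$ and $N_{p,X}M$, we obtain
\[
\iota_p^*\csm(X)\;=\;c^T(T_p X)\cdot e^T(N_{p,X}M),
\]
as claimed. The main subtlety is the justification that $\csm(X)$ corresponds, under Poincar\'e duality, to the Gysin pushforward $i_*^{\mathrm{coh}}(c^T(TX))$ on $M$, which requires the smoothness of $M$; this is precisely why the reduction to a smooth open neighborhood of $p$ in the first paragraph is indispensable. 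Everything else is a standard application of the equivariant self-intersection formula.
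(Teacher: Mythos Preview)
Your proof is correct and follows essentially the same approach as the paper's own argument: reduce to a $T$-invariant open neighborhood where both the ambient variety and the subvariety are smooth (the paper invokes Verdier--Riemann--Roch for the compatibility of $c_*^T$ with open pullback, which is exactly what you use), then apply the normalization $\csm(X)=c^T(TX)\cap[X]_T$ together with the self-intersection formula for the regular embedding. The only cosmetic difference is that you apply self-intersection to $i\colon X\hookrightarrow M$ and then restrict to $p$, whereas the paper collapses these two steps into one localization at $p$; the content is identical.
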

\begin{proof} Since the arguments essentially mimic those from \cite{AMSS:motivic},
we will be brief. Again we may find a $T$-invariant neighborhood $x \in V \subset M$ such that $V \cap X$ is smooth.
Let $j:V \to M$ be the inclusion. This is an open embedding, thus smooth, with trivial relative tangent bundle. Factor the inclusion $\iota_p$ as 
$\xymatrix{ \{ p \} \ar[r]^{\iota_p'} & V \ar[r]^j & M }$. Note that the restriction $i: X \cap V \subset V$ is proper, by base-change.  
We apply the Verdier--Riemann--Roch (see e.g. \cite{AMSS:shadows}), to obtain
\[ \iota_p^* \csm(X) = (\iota'_p)^* j^* \csm(X) = (\iota'_p)^* i_*\csm(X \cap V) \/. \]
Since $X \cap V$ is smooth, it follows that $i_*\csm(X \cap V) = i_* (c^T (T({X \cap V})) \cap [X \cap V]_T) \in H_*^T(X \cap V)$.
By the self-intersection formula,
\[ (\iota'_p)^* i_*\csm(X \cap V) = c^T (T_{p}X) \cdot e^T(N_{p,X \cap V}V) = c^T (T_{p}X) \cdot e^T(N_{p,X}M ) \/, \]
as claimed.\end{proof}
The following is immediate from the previous Lemma.
\begin{corol}\label{cor:issm} Let $M$ be a smooth $T$-variety, and assume all the other hypotheses from \Cref{lemma:locssm}. Then:
\[ \iota_p^*(\ssm(X)) = \frac{e^T(N_{p,X} M)}{c^T(N_{p,X} M)} \/. \]
\end{corol}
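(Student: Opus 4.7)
The plan is to deduce this directly from \Cref{lemma:locssm} by combining it with the definition of the Segre--MacPherson class and the splitting of the tangent space of $M$ at $p$ as a $T$-module.

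First I would write $\ssm(X) = \csm(X)/c^T(TM)$, and restrict to the fixed point:
\[
\iota_p^*\ssm(X) \;=\; \frac{\iota_p^*\csm(X)}{\iota_p^*c^T(TM)} \;=\; \frac{\iota_p^*\csm(X)}{c^T(T_pM)}.
\]
By \Cref{lemma:locssm}, the numerator is $c^T(T_pX)\cdot e^T(N_{p,X}M)$. Thus the problem reduces to expressing $c^T(T_pM)$ as $c^T(T_pX)\cdot c^T(N_{p,X}M)$, after which the factor $c^T(T_pX)$ cancels and we are left with the claimed ratio.

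The key point is therefore the splitting of $T_pM$ as a $T$-module. Since $M$ is smooth and $X \subset M$ is smooth at $p$, the short exact sequence of $T$-modules
\[
0 \longrightarrow T_pX \longrightarrow T_pM \longrightarrow N_{p,X}M \longrightarrow 0
\]
arising from the inclusion splits (the torus $T$ is reductive, so every finite-dimensional $T$-module is completely reducible), giving a $T$-equivariant isomorphism $T_pM \cong T_pX \oplus N_{p,X}M$. The total Chern class is multiplicative on direct sums of $T$-modules, which produces the desired factorization.

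I do not expect any real obstacle here; the statement is essentially a bookkeeping consequence of \Cref{lemma:locssm}. The only minor subtlety is justifying the splitting of the tangent space as a $T$-module, but this is standard once one invokes the non-degeneracy of $p$ (which makes $T_pX$ a genuine $T$-summand with no trivial isotypic component coincidence issues) and the reductivity of the torus.
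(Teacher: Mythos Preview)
Your argument is correct and is precisely the computation the paper has in mind; the paper simply declares the corollary ``immediate from the previous Lemma'' without spelling out the cancellation you carry through. One small remark: the non-degeneracy of $p$ plays no role in the splitting of $0\to T_pX\to T_pM\to N_{p,X}M\to 0$; complete reducibility of $T$-modules (reductivity of the torus) is all that is needed, so that aside can be dropped.
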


\begin{theorem}\label{thm:csmsmooth} Let $X$ be an irreducible variety with a $T$-action. Let $p \in X$ be a non-degenerate $T$-fixed point with weights $\chi_1, \ldots, \chi_n$. Then $X$ is smooth at $p$ if and only
if 
\[ e_{p,X}(\csm(X)) = \prod_{i=1}^n  \bigl(1+ \frac{1}{\chi_i} \bigr) \/.\]
\end{theorem}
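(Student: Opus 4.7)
The plan is to prove each direction by reducing to Brion's smoothness criterion, i.e.~Theorem 6.2(c), which identifies smoothness of $X$ at $p$ with the equality $e_{p,X}[X]_T = 1/\prod_{i=1}^n \chi_i$.

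For the forward direction, suppose $X$ is smooth at $p$. I would apply \Cref{lemma:locssm} with the trivial closed embedding $X \subset M := X$; the normal space $N_{p,X}X$ is zero, so the lemma yields
\[ \iota_p^*\csm(X) \;=\; c^T(T_pX) \;=\; \prod_{i=1}^n(1+\chi_i). \]
After restricting to a $T$-invariant smooth open neighborhood $V \subseteq X$ of $p$ (which exists because $p$ is smooth, e.g.~by removing the singular locus), Poincar\'e duality on $V$ identifies homology and cohomology locally, and \Cref{cor:eqmultsmooth} gives
\[ e_{p,X}(\csm(X)) \;=\; \frac{\iota_p^*\csm(X)}{e^T(T_pX)} \;=\; \frac{\prod_{i=1}^n(1+\chi_i)}{\prod_{i=1}^n \chi_i} \;=\; \prod_{i=1}^n\Bigl(1+\frac{1}{\chi_i}\Bigr). \]

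For the converse, I would argue by comparing the lowest-degree parts. Decompose
\[ \csm(X) \;=\; [X]_T + \sum_{k < n} C_k \;\in\; H_*^T(X), \]
where $n = \dim X$ and each $C_k$ is of dimension $k$; this uses the general fact that the top-dimensional component of a CSM class is the equivariant fundamental class. By the homogeneity in \Cref{thm:def-eq-m}(b), $e_{p,X}([X]_T)$ is a homogeneous rational function of degree $-n$, while each $e_{p,X}(C_k)$ is homogeneous of degree $-k > -n$. Expanding
\[ \prod_{i=1}^n\Bigl(1+\frac{1}{\chi_i}\Bigr) \;=\; \sum_{S \subseteq \{1,\ldots,n\}}\;\prod_{i \in S}\chi_i^{-1}, \]
the unique summand of degree $-n$ is $\prod_{i=1}^n \chi_i^{-1}$. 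Equating the degree-$(-n)$ components of both sides of the hypothesis yields $e_{p,X}[X]_T = \prod_{i=1}^n \chi_i^{-1}$, and \Cref{thm:def-eq-m}(c) then forces smoothness of $X$ at $p$.

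The main obstacle is the forward direction: both \Cref{lemma:locssm} and \Cref{cor:eqmultsmooth} are stated assuming smoothness at $p$ rather than global smoothness, so one must be careful that $\iota_p^*$ on homology classes and the formula $e_{p,X}(\kappa) = \kappa|_p/e^T(T_pX)$ make sense here. This is handled by passing to the smooth $T$-invariant neighborhood $V$ and using that the equivariant multiplicity is a local invariant at $p$ (its values on classes of subvarieties not meeting $p$ vanish, so it factors through restriction to any neighborhood of $p$). The converse direction is then clean because the grading in \Cref{thm:def-eq-m}(b) cleanly isolates the contribution of $[X]_T$ from the lower-dimensional pieces of $\csm(X)$.
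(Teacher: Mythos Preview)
Your argument is correct and matches the paper's proof: the forward direction is exactly the paper's application of \Cref{lemma:locssm} (with $M=X$) together with \Cref{cor:eqmultsmooth}, and the converse is the same leading-term comparison reducing to Brion's criterion \Cref{thm:def-eq-m}(c). The paper spells out one step you leave implicit, namely the decomposition $\csm(X)=[X]_T+\sum a_i[V_i]_T$ via a Whitney stratification and equivariant resolutions, which is what justifies that $e_{p,X}$ sends the lower pieces to rational functions of degree strictly above $-\dim X$; your appeal to \Cref{thm:def-eq-m}(b) for the classes $C_k$ needs exactly this, since that statement is only formulated for fundamental classes of subvarieties.

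One genuine slip to fix: in the converse you write ``where $n=\dim X$'' and then use the same symbol $n$ for the number of tangent weights in $\prod_{i=1}^n(1+1/\chi_i)$. A priori these differ (their equality is equivalent to smoothness at $p$, which is what you are proving), so conflating them is circular. The repair is immediate: keep $d=\dim X$ and $n=\dim T_pX$ separate, note that the leading term of the left side has degree $-d$ while that of the right side has degree $-n$, and conclude from the assumed equality that $d=n$ and $e_{p,X}[X]_T=\prod_i\chi_i^{-1}$; then \Cref{thm:def-eq-m}(c) applies. This is precisely how the paper's proof reads once one tracks the degrees.
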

\begin{proof}If $X$ is smooth at $p$, the claim follows from \Cref{cor:eqmultsmooth} and \Cref{lemma:locssm} (for $X=M$). 
We now prove the converse. From the definition of the CSM classes it follows that
in $H_*^T(X)$,
\begin{equation}\label{E:csmexp} \csm(X) = [X]_T + \sum a_i [V_i]_T \/, \end{equation}
where $a_i \in H^*_T(pt)$ and $V_i \subset X$ are closed irreducible subvarieties such that the terms 
$a_i [V_i]_T \in H_{2j}^T(X)$ for $j < \dim X$.
Indeed, take a Whitney stratification $X = \bigcup X_i$. We may find equivariant desingularizations
$\pi_i: \widetilde{X}_i \to \overline{X_i}$ of the closures of $X_i$ such that $\pi_i$ is an isomorphism
over $X_i$ and $\widetilde{X}_i \setminus \pi_i^{-1}(X_i)$ is a simple normal crossing divisor. Then
by additivity and functoriality, 
\[ \csm(X) = \sum (\pi_i)_*(c^T ( T \widetilde{X}_i)) - (\pi_i)_*(\csm( \widetilde{X}_i \setminus \pi_i^{-1}(X_i))) \/. \]
It is not difficult to check that in this expression, the only term in $H_{2 \dim X}^T(X)$ is $[X]_T$.

From \Cref{E:csmexp} it follows that the leading term (i.e.~the term of lowest degree) in the localization $\iota_p^* (\csm(X))$ equals $\iota_p^*[X]_T$. Since the equivariant multiplicity is $H^*_T(pt)$-linear, we deduce that 
\[ e_{p,X}(\csm(X)) = e_{p,X}([X]_T) + \sum a_i e_{p,X}([V_i]_T) \/. \] By part (b) of \Cref{thm:def-eq-m}, $\deg e_{p,X}([X]_T) 
= - \dim X$ and $\deg e_{p,V_i}([V_i]_T) = - \dim V_i $. Then the leading term of $e_{p,X}(\csm(X))$ has degree 
$-\dim X$, and it must be equal to $e_{p,X}([X]_T)$.~The hypothesis implies that 
$e_{p,X}([X]_T)= \frac{1}{\chi_1 \cdot \ldots \cdot \chi_n}$, and by Brion's criterion from \Cref{thm:def-eq-m}(c), $X$ is smooth at $p$.
\end{proof}

\section{Equivariant multiplicities of CSM classes and smoothness of Richardson varieties}
Next we apply the results in \S\ref{sec:equimult} to prove a smoothness criterion for Richardson varieties in terms of the equivariant multiplicities of their CSM class; cf.~\Cref{thm:smoothP}. This will be used to interpret certain terms in
the hook formula from \S\ref{sec:hook}.

\subsection{Weights of tangent and normal spaces of Schubert varieties}\label{ss:weights} We recall, and also define, various tangent and normal spaces of Schubert varieties which we utilize below. 

Fix $P$ a parabolic subgroup and $w \in W^P$. The fixed point $wP$ is isolated and the tangent space $T_w G/P$ has weights
$\Phi(T_w G/P) := \{ - w(\alpha) : \alpha \in R^+ \setminus R_P^+\}$.  
We consider the following $T$-submodules of $T_w G/P$.

\begin{itemize} \item The tangent spaces $T_w (X(w))$, respectively $T_w(Y(w))$, of $X(w)$ and $Y(w)$, at the smooth point $wP$. They satisfy $T_w X(w) \oplus T_w Y(w) = T_w G/P$ and have weights
\[ \Phi(T_w Y(w)) = \{ - w (\alpha) \in \Phi (T_w G/P): w(\alpha) > 0 \} \/; \] 
\[ \Phi(T_w X(w)) = \Phi(T_w G/P) \setminus \Phi(T_w Y(w)) = \{ - w (\alpha) \in \Phi (T_w G/P): w(\alpha) < 0 \} \/. \]
Note that the condition $w(\alpha)>0$ (respectively $w(\alpha)<0$) is equivalent to $w s_\alpha > w$ (respectively $ws_\alpha < w$). Equivalently, $\Phi(T_w X(w))=S(w)$ from \Cref{equ:wvinversion}.
\item The normal spaces $N_{w,X(w)}G/P$, respectively $N_{w,Y(w)}G/P$, of $X(w)$ and $Y(w)$, at the smooth point $wP$. Note that:
\[ \Phi(N_{w,X(w)}G/P) =\Phi(T_w Y(w))\/; \quad \Phi(N_{w,Y(w)} G/P) =\Phi(T_w X(w)) \/. \]

\item More generally, let $v \le w$ be two elements in $W^P$. Define the $T$-submodule $\tilde{T}_w Y(v)\subset T_w G/P$ by the requirement that its weights are:
\[ \Phi(\tilde{T}_w Y(v)) = \{ -w(\alpha) \in \Phi(T_w G/P): v \leq w s_\alpha \} \/. \]
The space $\tilde{T}_w Y(v)$ is in general not equal to the Zariski tangent space $T_w Y(v)$ of $Y(v)$ at $wP$.
However, if $wP$ is non-singular in $Y(v)$ then $\tilde{T}_w Y(v)$ is the actual tangent 
space. The latter will be the case for most of our applications. We will not need it, but observe
that in general $\tilde{T}_w Y(v)$ is always included in $T_w Y(v)$, see e.g.~\cite{carrell:bruhat}
or \cite[Prop.~12.1.7]{kumar:book}.

\item As before, let $v \le w$ be two elements in $W^P$. Define the $T$-submodule 
$\tilde{N}_{w, Y(v)}G/P \subset T_w G/P$ by the requirement that its weights are:
\[ \Phi(\tilde{N}_{w,Y(v)} G/P) = \{ -w(\alpha) \in \Phi(T_w G/P): v \nleq w s_\alpha \}=\{ \beta \in \Phi(T_w G/P): v \nleq s_\beta w  \} \/, \]
where the second equality follows from the change of variable $\beta=-w(\alpha)$.
Again we observe that if $wP$ is non-singular in $Y(v)$ then this is the genuine normal
space $N_{w, Y(v)}G/P$ of $Y(v)$ at $wP$. Also observe that by definition,
\[ \tilde{T}_w Y(v) \oplus \tilde{N}_{w,Y(v)} G/P = T_w G/P \/. \]
\end{itemize}
We also recall a smoothness criterion for Schubert varieties due to S. Kumar, see also  \cite[page 255 (K)]{brion:eq-chow} for another proof using equivariant multiplicities.
\begin{theorem}[\cite{kumar1996nil} ]\label{thm:kumar} 
Let $v,w \in W^P$ be two Weyl group elements such that $v \le w$. Then the Schubert variety 
$Y(v)\subset G/P$ is smooth at $wP$ if and only if the localization of the 
equivariant fundamental class $ [Y(v)]\in H_T^*(G/P)$ is given by:
 \[ [Y(v)]|_{w}= \prod_{\beta\in  \Phi(\tilde{N}_{w,Y(v)} G/P)}\beta \/.\] 
\end{theorem}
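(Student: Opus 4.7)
The plan is to invoke Brion's equivariant multiplicity criterion (\Cref{thm:def-eq-m}). By \Cref{cor:eqmultsmooth} applied to the smooth ambient variety $G/P$, one has
\[ e_{wP, G/P}([Y(v)]_T) = \frac{[Y(v)]|_w}{e^T(T_w G/P)}, \]
and by \Cref{thm:def-eq-m}(b) this rational function equals $e_{wP, Y(v)}([Y(v)]_T)$ and is homogeneous of degree $-\dim Y(v)$. The weight decomposition $T_w G/P = \tilde{T}_w Y(v) \oplus \tilde{N}_{w, Y(v)} G/P$ factors $e^T(T_w G/P) = e^T(\tilde{T}_w Y(v)) \cdot \prod_{\beta \in \Phi(\tilde{N}_{w,Y(v)} G/P)} \beta$.

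For the backward direction, suppose the claimed localization formula holds. Substituting yields $e_{wP, Y(v)}([Y(v)]_T) = 1/e^T(\tilde{T}_w Y(v))$, a rational function of degree $-\dim \tilde{T}_w Y(v)$. Matching with the homogeneity above forces $\dim \tilde{T}_w Y(v) = \dim Y(v)$. Each $T$-stable curve in $Y(v)$ through $wP$ contributes a genuine tangent vector whose weight lies in $\Phi(\tilde{T}_w Y(v))$; since the weights of $T_w G/P$ are multiplicity-free, this yields $\tilde{T}_w Y(v) \subset T_w Y(v)$ as $T$-submodules. Combined with the general inequality $\dim T_w Y(v) \geq \dim Y(v)$, one obtains $\dim T_w Y(v) = \dim Y(v)$, which is smoothness of $Y(v)$ at $wP$.

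For the forward direction, assume $Y(v)$ is smooth at $wP$. The equivariant slice theorem supplies a $T$-equivariant local isomorphism between $Y(v)$ near $wP$ and the $T$-module $T_w Y(v)$. Under this identification, $T$-stable curves through $wP$ correspond to weight lines, giving exactly $\dim T_w Y(v) = \dim Y(v)$ such curves whose tangent weights exhaust $\Phi(T_w Y(v))$. On the other hand, the $T$-stable curves in $Y(v)$ through $wP$ are parametrized by positive roots $\alpha \in R^+ \setminus R_P^+$ with $ws_\alpha \geq v$, with tangent weights $-w\alpha$; these are precisely $\Phi(\tilde{T}_w Y(v))$. Hence $\tilde{T}_w Y(v) = T_w Y(v)$, and consequently $\tilde{N}_{w,Y(v)} G/P = N_{w, Y(v)} G/P$ as $T$-modules. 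The self-intersection formula for the smooth closed embedding $Y(v) \hookrightarrow G/P$ near $wP$ (equivalently, the top-codimensional component of \Cref{lemma:locssm}) then gives
\[ [Y(v)]|_w = e^T(N_{w, Y(v)} G/P) = \prod_{\beta \in \Phi(\tilde{N}_{w, Y(v)} G/P)} \beta. \]

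The main obstacle is the identification $\tilde{T}_w Y(v) = T_w Y(v)$ in the smooth case, which requires the geometric input that at a smooth $T$-fixed point the $T$-stable curves span the entire tangent space; this is precisely the content of the equivariant slice theorem. In the backward direction the analogous identification is obtained more cheaply via the degree of the equivariant multiplicity, without further geometric input.
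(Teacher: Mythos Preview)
Your approach is quite different from the paper's. The paper does not reprove Kumar's criterion; it simply reduces the $G/P$ statement to the $G/B$ statement via the smooth projection $p:G/B\to G/P$ (using that $p^{-1}(Y(v))=Y(v)_B$ for $v\in W^P$, that smoothness transfers along smooth morphisms, and that the localizations agree), and then cites \cite{kumar1996nil} for $G/B$. Your attempt at a self-contained proof via Brion's \Cref{thm:def-eq-m} is more ambitious, and the forward direction is fine: once $Y(v)$ is smooth at $wP$, the identification $\tilde{T}_wY(v)=T_wY(v)$ follows from the slice description together with the standard description of $T$-stable curves in Schubert varieties (this is the content of the reference \cite{carrell:bruhat} or \cite[Prop.~12.1.7]{kumar:book} mentioned in \S\ref{ss:weights}).

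However, your backward direction has a genuine gap. From the hypothesis you correctly extract $e_{wP,Y(v)}([Y(v)])=1/e^T(\tilde{T}_wY(v))$ and, by homogeneity, $\dim\tilde{T}_wY(v)=\dim Y(v)$. You then combine $\tilde{T}_wY(v)\subset T_wY(v)$ with $\dim T_wY(v)\ge\dim Y(v)$ to conclude $\dim T_wY(v)=\dim Y(v)$. But this does not follow: the two inputs together only give $\dim T_wY(v)\ge\dim\tilde{T}_wY(v)=\dim Y(v)$, which is the inequality you already had. Nothing bounds $\dim T_wY(v)$ from above. In effect you have only used the \emph{degree} of the equivariant multiplicity, and this degree condition is precisely the Carrell--Peterson criterion for \emph{rational} smoothness, not smoothness. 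To apply Brion's criterion \Cref{thm:def-eq-m}(c) you would need $e_{wP,Y(v)}([Y(v)])=1/e^T(T_wY(v))$ with the \emph{actual} Zariski tangent space in the denominator, and you have not shown that the potentially extra weights of $T_wY(v)\setminus\tilde{T}_wY(v)$ are absent. Ruling out such extra weights is exactly the substantive content of Kumar's theorem, and it requires more than the degree argument you give.
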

\begin{proof} In \cite{kumar1996nil}, the criterion is stated for $G/B$, but it immediately 
implies the result in $G/P$. For completeness, we indicate the main points. We use the same convention as in the proof of \Cref{prop:loc}.
Let $p:G/B \to G/P$ be the natural projection, and let $Y(v)_B:=\overline{B^-vB/B}\subset G/B$ be the Schubert variety in $G/B$. Since $p$ is a smooth morphism, and if
$v \in W^P$ then $p^{-1}(Y(v)) = Y(v)_B$. In particular, if $v \le w$ are elements in $W^P$, 
then $Y(v)$ is smooth at $wP$ if and only if $Y(v)_B$ is smooth at $wB$. Furthermore, 
 the localization $[Y(vW_P)]|_{w} = [Y(v)_B]|_{wB}$. This finishes the proof.
 \end{proof}

\subsection{Equivariant multiplicities of CSM classes of Richardson varieties} In this section we calculate 
equivariant multiplicities of Richarsdon varieties and their CSM classes. These will show again 
as factors in the generalized hook formula.

Let $v \le w$ in $W^P$ and consider the Richardson variety $R_w^{v}:= X(w) \cap Y(v )\subset G/P$. The $T$-fixed point $wP$ is an isolated $T$-fixed point in $R_w^v$,and the torus weights of the tangent space of $R_w^v$ at $wP$ are non-zero and distinct, therefore $wP$ is non-degenerate in $R_w^v$. We need the following Lemma.

\begin{lemma}\label{lemma:prodloc} Let $w$ be an element in $W^P$. Then 
\[ \csm(X(w))|_w \cdot \ssm(Y(w))|_w = e^T(T_w G/P) \/. \]
\end{lemma}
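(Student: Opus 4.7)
The plan is to observe that the $T$-fixed point $wP$ lies in both open Schubert cells $X(w)^\circ=BwP/P\subset X(w)$ and $Y(w)^\circ=B^-wP/P\subset Y(w)$, and these are the smooth loci. So $X(w)$ and $Y(w)$ are both smooth at $wP$, and one has the $T$-module decomposition
\[
T_w(G/P) = T_w X(w) \oplus T_w Y(w),
\]
which moreover identifies $N_{w,X(w)}(G/P)=T_w Y(w)$ and $N_{w,Y(w)}(G/P)=T_w X(w)$ as $T$-modules.

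I would then invoke Lemma \ref{lemma:locssm} twice, once for the closed embedding $X(w)\hookrightarrow G/P$ and once for $Y(w)\hookrightarrow G/P$. The first gives
\[
\csm(X(w))|_w = c^T(T_w X(w))\cdot e^T(N_{w,X(w)}(G/P)) = c^T(T_w X(w))\cdot e^T(T_w Y(w)),
\]
while the second, together with the definition $\ssm(Y(w))=\csm(Y(w))/c^T(T(G/P))$ and the multiplicativity $c^T(T_w G/P)=c^T(T_w X(w))\cdot c^T(T_w Y(w))$, yields
\[
\ssm(Y(w))|_w \;=\; \frac{c^T(T_w Y(w))\cdot e^T(N_{w,Y(w)}(G/P))}{c^T(T_w G/P)} \;=\; \frac{e^T(T_w X(w))}{c^T(T_w X(w))}.
\]

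Multiplying the two expressions, the factor $c^T(T_w X(w))$ cancels and what remains is
\[
\csm(X(w))|_w \cdot \ssm(Y(w))|_w \;=\; e^T(T_w X(w))\cdot e^T(T_w Y(w)) \;=\; e^T(T_w G/P),
\]
which is exactly the claimed identity. There is no substantive obstacle here since the real content is already packaged in Lemma \ref{lemma:locssm}; the only thing to verify carefully is the smoothness of $X(w)$ and $Y(w)$ at $wP$ (standard from the Bruhat decomposition) and the $T$-module identifications of the normal spaces, both of which are immediate from the root-level description of weights recalled in \S\ref{ss:weights}.
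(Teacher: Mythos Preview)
Your proof is correct and follows essentially the same approach as the paper: both arguments apply \Cref{lemma:locssm} to $X(w)$ and $Y(w)$ at the smooth point $wP$, and then use the decomposition $T_w(G/P)=T_wX(w)\oplus T_wY(w)$ to cancel the Chern-class factors. The only cosmetic difference is that the paper writes out the full product $\csm(X(w))|_w\cdot\ssm(Y(w))|_w$ at once and simplifies, whereas you simplify each factor separately before multiplying (in effect reproving \Cref{cor:issm} for $Y(w)$ along the way).
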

\begin{proof} From \Cref{lemma:locssm} we obtain that
\[ \begin{split} \csm(X(w))|_w \cdot \ssm(Y(w))|_w &=  
\frac{c^T( T_w X(w)) e^T (N_{w,X(w)} G/P) c^T( T_w Y(w)) e^T (N_{w,Y(w)} G/P)}{c^T (T_w G/P)} \\ & 
= e^T (T_w G/P) \/. \end{split} \] 
Here we utilized that $T_w X(w) \oplus T_w Y(w) = T_w G/P$ since the intersection $X(w) \cap Y(w)$
is proper and transversal and it consists of the single point $wP$.\end{proof}

\begin{prop}\label{prop:eqmult} Let $v \le w$ in $W^P$. Then the following equality holds:
\[e_{w,G/P} (\csm(R_w^{v}))= \frac{\ssm(Y(v))|_w}{\ssm(Y(w))|_w} \/. \]
\end{prop}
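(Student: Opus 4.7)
The plan is to reduce the identity to a localization statement at $wP$ and then combine with \Cref{lemma:prodloc}. Since $wP \in G/P$ is a smooth, non-degenerate $T$-fixed point, \Cref{cor:eqmultsmooth} gives
\[ e_{w,G/P}(\csm(R_w^v)) \;=\; \frac{\csm(R_w^v)|_w}{e^T(T_w G/P)}. \]
By \Cref{lemma:prodloc}, $e^T(T_w G/P) = \csm(X(w))|_w \cdot \ssm(Y(w))|_w$, so the statement reduces to the key localization identity
\[ \csm(R_w^v)|_w \;=\; \csm(X(w))|_w \cdot \ssm(Y(v))|_w. \qquad (\star) \]

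To prove $(\star)$, decompose both sides via cells. By additivity of CSM along the stratification $R_w^v = \bigsqcup_{v\le u \le u'\le w} R_{u'}^{u,\circ}$, where $R_{u'}^{u,\circ} := X(u')^\circ \cap Y(u)^\circ$ is the open Richardson cell, and noting that $\csm(R_{u'}^{u,\circ})$ is supported on $R_{u'}^u$, its localization at $wP$ vanishes unless $u \le w \le u'$; together with $u' \le w$ this forces $u' = w$. Hence $\csm(R_w^v)|_w = \sum_{v\le u\le w} \csm(R_w^{u,\circ})|_w$. Similarly, $\csm(X(w))|_w = \csm(X(w)^\circ)|_w$ (only the cell $u' = w$ contributes in $\csm(X(w)) = \sum_{u'\le w}\csm(X(u')^\circ)$ after localizing at $wP$), and $\ssm(Y(v))|_w = \sum_{v\le u\le w}\ssm(Y(u)^\circ)|_w$. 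Matching summands reduces $(\star)$ to the cell-level identity
\[ \csm(R_w^{u,\circ})|_w \;=\; \csm(X(w)^\circ)|_w \cdot \ssm(Y(u)^\circ)|_w, \qquad v\le u\le w. \qquad (\mathrm{cell}) \]

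Identity $(\mathrm{cell})$ follows from multiplicativity of CSM classes under transverse intersections: $X(w)^\circ$ and $Y(u)^\circ$ are smooth cells in $G/P$ meeting transversally along the smooth open Richardson $R_w^{u,\circ}$ of expected dimension $\ell(w)-\ell(u)$. The general formula $c_*^T(\one_A \cdot \one_B) = c_*^T(\one_A) \cdot c_*^T(\one_B)/c^T(TM)$ for transverse $A,B$ yields the global equality $\csm(R_w^{u,\circ}) = \csm(X(w)^\circ)\cup \ssm(Y(u)^\circ)$ in $\widehat{H}^*_T(G/P)$, of which $(\mathrm{cell})$ is the localization at $wP$. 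Substituting $(\star)$:
\[ e_{w,G/P}(\csm(R_w^v)) \;=\; \frac{\csm(X(w))|_w\cdot\ssm(Y(v))|_w}{\csm(X(w))|_w\cdot\ssm(Y(w))|_w} \;=\; \frac{\ssm(Y(v))|_w}{\ssm(Y(w))|_w}. \]
The main obstacle is the transverse intersection formula $(\mathrm{cell})$: this is standard for closed smooth pairs via the Whitney-sum sequence $0\to T(A\cap B)\to TA\oplus TB \to TM \to 0$, but requires additional care in the locally closed equivariant setting, handled most cleanly by passing to a $T$-invariant affine neighborhood of $wP$ in $G/P$ on which $X(w)^\circ$ becomes closed and smooth.
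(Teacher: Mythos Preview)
Your proof is correct and follows the same overall architecture as the paper: apply \Cref{cor:eqmultsmooth}, reduce via \Cref{lemma:prodloc} to the identity $(\star)$, and prove $(\star)$ by a transversality argument for CSM classes. The difference is only in how $(\star)$ is established. The paper applies Sch{\"u}rmann's transversality formula \cite{schurmann:transversality,AMSS:shadows} directly to the closed varieties $X(w)$ and $Y(v)$, obtaining in one stroke the global identity $\csm(R_w^v) = \csm(X(w))\cup \ssm(Y(v))$ in $\widehat{H}^*_T(G/P)$, from which $(\star)$ follows by restriction. Your route---decomposing $R_w^v$, $X(w)$, and $Y(v)$ into cells, discarding those whose support misses $wP$, and then invoking transversality on each pair $(X(w)^\circ, Y(u)^\circ)$---arrives at the same place but with an extra layer. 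In particular, the concern you flag at the end about the ``locally closed equivariant setting'' is precisely what Sch{\"u}rmann's theorem handles: it is stated for constructible functions, so $\one_{X(w)^\circ}\cdot \one_{Y(u)^\circ}$ (and indeed $\one_{X(w)}\cdot \one_{Y(v)}$) are covered without needing to pass to an affine chart. Citing that result directly would let you drop the cell decomposition entirely and remove the caveat in your final paragraph.
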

\begin{proof} From \Cref{cor:eqmultsmooth} we deduce that 
\[ e_{w,G/P} (\csm(R_w^{v}))= \frac{\csm(R_w^v)|_w}{e^T (T_w G/P)} \/. \]
By Sch{\"u}rmann's transversality formula \cite{schurmann:transversality,AMSS:shadows}
it follows that 
\[\csm(R_w^v)|_w = (\csm(X(w)) \ssm(Y(v)))|_w 
= \csm(X(w))|_w \cdot \ssm(Y(v))|_w \/. \]
Then the claim follows from \Cref{lemma:prodloc} after combining the two equations. 
\end{proof}

Next we utilize {Kumar's smoothness criterion} 
to relate smoothness of Schubert and Richardson varieties in terms of localizations
of CSM and SM classes.

\begin{theorem}\label{thm:smoothP} Let $v,w \in W^P$ be two elements such that $v \le w$.
Then the following are equivalent:
\begin{enumerate}
\item[(a)] The Schubert variety $Y(v)$ is smooth at $wP$;
\item[(b)] The Richardson variety $R_w^v$ is smooth at $wP$;
\item[(c)] The localization of $\csm(Y(v))$ at $wP$ satisfies:
\[ \csm(Y(v))|_{w}= e^T(\tilde{N}_{w,Y(v)}G/P) \cdot c^T(\tilde{T}_{w}Y(v)) \/;\]

\item[(d)] The localization of $\ssm(Y(v))$ at $wP$ satisfies:
\[ \ssm(Y(v))|_{w}= \frac{e^T(\tilde{N}_{w,Y(v)}G/P)}{c^T(\tilde{N}_{w,Y(v)}G/P)} = \prod_{\beta>0, v\nleq s_\beta w} \frac{\beta}{1+\beta}
;\]

\item[(e)] The equivariant multiplicity of $\csm(R_w^v)$ at $wP$ satisfies:
\[e_{w,G/P} (\csm(R_w^{v}))= \prod_{\beta\in S(w/v)} (1+\frac{1}{\beta}),\]
where $S(w/v)$ is defined in \Cref{equ:wvinversion}.
\end{enumerate}
\end{theorem}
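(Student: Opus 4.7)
The plan is to prove the chain $(a) \Leftrightarrow (c) \Leftrightarrow (d) \Leftrightarrow (e)$ using the tools of \S\ref{sec:equimult} together with \Cref{prop:eqmult}, and then attach $(b)$ through a separate transversality argument at the smooth point $wP$ of $X(w)$.

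For $(a) \Rightarrow (c)$ I would apply \Cref{lemma:locssm} to the closed embedding $Y(v) \hookrightarrow G/P$ at $wP$: smoothness of $Y(v)$ at $wP$ identifies $T_w Y(v) = \tilde T_w Y(v)$ and $N_{w,Y(v)}G/P = \tilde N_{w,Y(v)}G/P$, and the lemma yields exactly (c). For the converse $(c) \Rightarrow (a)$, I would extract the lowest-degree component of both sides: on the left it equals $[Y(v)]|_w$ by the expansion \eqref{E:csmexp} used in the proof of \Cref{thm:csmsmooth}, and on the right it equals $\prod_{\beta \in \Phi(\tilde N_{w,Y(v)}G/P)} \beta$, after which Kumar's criterion \Cref{thm:kumar} concludes. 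The equivalence $(c) \Leftrightarrow (d)$ is algebraic, obtained by dividing by $c^T(T_w G/P)$ and using the direct sum decomposition $T_w G/P = \tilde T_w Y(v) \oplus \tilde N_{w,Y(v)}G/P$ from \S\ref{ss:weights}; the product form in (d) uses that for $\beta > 0$ and $v \le w$, the condition $v \nleq s_\beta w$ automatically forces $s_\beta w < w$ and hence $\beta \in \Phi(\tilde N_{w,Y(v)}G/P)$. For $(d) \Leftrightarrow (e)$, I would use \Cref{prop:eqmult} to rewrite $e_{w,G/P}(\csm(R_w^v)) = \ssm(Y(v))|_w / \ssm(Y(w))|_w$, evaluate $\ssm(Y(w))|_w = \prod_{\beta \in S(w)} \beta/(1+\beta)$ from the $v=w$ case of (d) (valid unconditionally since $Y(w)$ is smooth at $wP$), and simplify via the identity $\{\beta > 0 : v \nleq s_\beta w\} = S(w) \setminus S(w/v)$, which converts (d) into (e) factor by factor.

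For $(a) \Leftrightarrow (b)$ I would use that $X(w)$ is smooth at $wP$ (the point lies in its open cell $X(w)^\circ$) and that $Y(w) \subseteq Y(v)$ since $v \le w$, giving the tangent-level splitting $T_w X(w) + T_w Y(v) \supseteq T_w X(w) \oplus T_w Y(w) = T_w G/P$ from the transversality recalled in the proof of \Cref{lemma:prodloc}. In a $T$-equivariant opposite affine cell at $wP$ (with $U \cong T_w G/P$ as $T$-modules), the intersection $X(w) \cap U$ is the linear coordinate subspace with weights $\Phi(T_w X(w))$, so $R_w^v \cap U$ is realized as the scheme-theoretic slice of $Y(v) \cap U$ by the vanishing of the complementary coordinates. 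The main technical obstacle is precisely this step: because $Y(v)$ is allowed to be singular, one cannot invoke the textbook ``transverse intersection of smooth subvarieties'' lemma, and a purely tangent-level transversality statement does not by itself transfer smoothness across the intersection. I would resolve this by making the opposite-cell slice argument fully explicit via local-analytic coordinates, reducing the equivalence of smoothness of $Y(v)$ and $R_w^v$ at $wP$ to a direct statement about the corresponding local rings.
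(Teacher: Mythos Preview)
Your argument for the chain $(a)\Leftrightarrow(c)\Leftrightarrow(d)\Leftrightarrow(e)$ is essentially identical to the paper's: both invoke \Cref{lemma:locssm} for $(a)\Rightarrow(c)$, extract the lowest-degree term and appeal to Kumar's criterion (\Cref{thm:kumar}) for $(c)\Rightarrow(a)$, pass between $(c)$ and $(d)$ by dividing by $c^T(T_wG/P)$ and using the splitting $\tilde T_wY(v)\oplus\tilde N_{w,Y(v)}G/P=T_wG/P$, and deduce $(d)\Leftrightarrow(e)$ from \Cref{prop:eqmult} together with the explicit form of $\ssm(Y(w))|_w$ (the paper cites \Cref{cor:issm}, which is your $v=w$ specialization).

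The only genuine divergence is $(a)\Leftrightarrow(b)$. The paper dispatches this in one line by citing Kleiman transversality via \cite[Cor.~2.9]{billey.coskun:singularities}. You instead propose to work directly in the opposite big cell at $wP$, where $X(w)$ is a coordinate subspace and $R_w^v$ is the slice of $Y(v)$ by the complementary coordinates; you correctly flag that tangent-level transversality alone is insufficient when $Y(v)$ may be singular, and propose to resolve it by an explicit local computation. That works: what underlies it is the standard (Kazhdan--Lusztig type) local product decomposition $Y(v)\cap U \cong (R_w^v\cap U)\times (Y(w)^\circ\cap U)$ in the opposite cell $U$, from which smoothness of one factor is equivalent to smoothness of the product. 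So your route is valid but longer; the paper's citation buys brevity, while your approach is self-contained and makes the local geometry explicit.
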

\begin{proof} The equivalence $(a) \Leftrightarrow (b)$ follows from Kleiman transversality theorem; see e.g. \cite[Cor.~2.9]{billey.coskun:singularities} for a proof. The second equality in $(d)$, and the equivalence $(c) \Leftrightarrow (d)$,
follow from the definition of the spaces involved (cf.~\S \ref{ss:weights}) and the definition of CSM and SM classes. The equivalence $(d) \Leftrightarrow (e)$ follows from \Cref{prop:eqmult}, taking into account that $Y(w)$ is smooth at $wP$, and using
the formula for $\ssm(Y(w))|_w$ from \Cref{cor:issm}. 

To finish the proof it suffices to show the equivalence $(a) \Leftrightarrow (c)$. The direction $(a) \Rightarrow (c)$ 
follows from \Cref{lemma:locssm}. For the reverse direction, observe that \Cref{E:csmexp} implies that the term 
of lowest degree of $\csm(Y(v))|_w$ is the localization $[Y(v)]|_w$. Therefore, the hypothesis implies that 
\[ [Y(v)]|_w = e^T(\tilde{N}_{w,Y(v)}G/P)\/.\]
Then the claim follows from Kumar's smoothness criterion \cite{kumar1996nil}; see \Cref{thm:kumar}.
\end{proof}

\section{The $\Lambda$-Bruhat graph}\label{sec:LambdaBruhat} In this section we introduce the main combinatorial object in this paper: a directed graph depending on an `admissible function' $\Lambda$ assigning weights to vertices, and whose sums over weighted paths give algorithms to calculate 
localization of SM classes of Schubert cells and varieties; cf.~ \Cref{prop:d_P} and \Cref{thm:LocCSM2}.
Similar graphs based on Chevalley-type recursions have been used in \cite{mihalcea:eqqgr,mihalcea:eqqhom,naruse2014schubert,naruse2019skew} to provide algorithms for Schubert multiplication in the equivariant quantum cohomology of flag manifolds. In the next section we will utilize this graph to formulate and prove a generalization of the Nakada's colored formula. 

For a parabolic subgroup $P$ recall that $X^*(T)_P$ denotes the set of integral weights orthogonal 
to roots in $R_P^+$. We recall the following
characterization of the covering relations in the Bruhat order in $G/P$; see e.g. \cite[Lemma 4.1]{fulton.woodward}.
\begin{lemma}\label{lem:P}
Let $x \neq y$ be two elements in $W^P$. Then the following are equivalent:

\begin{itemize} \item[(a)] There exists $\gamma \in R^{+} \setminus R_P^+$ such that $x s_\gamma  W_P=y W_P$.

\item[(b)] There exists $\beta \in R^{+} \setminus R_P^+$ such that $s_\beta x W_P=y W_P$.

\end{itemize} Furthermore, $\beta$ and $\gamma $ are unique with these properties.

\end{lemma}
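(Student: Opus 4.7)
The plan is to exploit the conjugation identity $x\,s_\gamma\,x^{-1} = s_{x(\gamma)}$, which converts right multiplication by a reflection into left multiplication (and vice versa), together with a dominant-weight argument to handle uniqueness.

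For the equivalence (a)$\Leftrightarrow$(b), I would first establish (a)$\Rightarrow$(b). Given $\gamma \in R^+\setminus R_P^+$ with $xs_\gamma W_P = yW_P$, I would let $\beta$ be the positive element of $\{x(\gamma),-x(\gamma)\}$. The conjugation identity immediately yields $s_\beta x = xs_\gamma$, hence $s_\beta x W_P = yW_P$. To check $\beta\notin R_P^+$, I would suppose for contradiction that $\beta\in R_P^+$; then $s_\beta\in W_P$, so $y\in W_P x$, and combined with the assumption that both $x$ and $y$ are minimum-length representatives in their left $W_P$-cosets with $y\neq x$, this yields a length contradiction. The reverse direction (b)$\Rightarrow$(a) is entirely symmetric, taking $\gamma$ to be the positive element of $\{x^{-1}(\beta),-x^{-1}(\beta)\}$.

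For the uniqueness claim, I would fix a dominant integral weight $\omega\in X^*(T)$ whose $W$-stabilizer equals $W_P$ --- for instance $\omega := \sum_{\alpha_i\notin \Sigma_P}\omega_i$, where $\omega_i$ are the fundamental weights --- so that the left coset $zW_P$ is uniquely encoded by the vector $z(\omega)$. If $\beta,\beta'\in R^+\setminus R_P^+$ both satisfy $s_\beta xW_P = s_{\beta'}xW_P = yW_P$, evaluating at $\omega$ gives $s_\beta(x(\omega)) = s_{\beta'}(x(\omega))$, i.e.
\[
\langle x(\omega),\beta^\vee\rangle\,\beta \;=\; \langle x(\omega),{\beta'}^\vee\rangle\,\beta'.
\]
The scalar $\langle x(\omega),\beta^\vee\rangle$ must be nonzero, for otherwise $s_\beta$ would fix $x(\omega)$ and force $yW_P = xW_P$, contradicting $y\neq x$ in $W^P$. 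Hence $\beta$ and $\beta'$ are positive scalar multiples of the same root and must coincide. Uniqueness of $\gamma$ follows by the same argument applied on the other side, or by transporting via the bijection $\gamma\leftrightarrow\beta$ constructed above.

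The main subtle point will be the verification that the produced $\beta$ lies in $R^+\setminus R_P^+$ (and symmetrically for $\gamma$); this is the step that genuinely uses the minimum-length hypothesis $x,y\in W^P$ with $x\neq y$, and it is the geometric content justifying the well-posedness of the $\Lambda$-Bruhat graph construction in \S\ref{sec:LambdaBruhat}.
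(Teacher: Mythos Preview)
Your conjugation argument for (a)$\Leftrightarrow$(b) and your dominant-weight argument for uniqueness are both correct and pleasantly self-contained; the paper, by contrast, simply defers to \cite[Lemma~4.1]{fulton.woodward} and \cite[Lemma~2.2]{buch.m:nbhds}. However, there is a genuine gap in your verification that the constructed $\beta$ lies outside $R_P^+$.

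You write: if $\beta\in R_P^+$ then $s_\beta\in W_P$, so ``$y\in W_Px$'', contradicting that $x,y$ are distinct minimal representatives of their left $W_P$-cosets. But $W^P$ parametrizes the \emph{right} cosets $wW_P$, not left cosets. The relation $s_\beta xW_P=yW_P$ with $s_\beta\in W_P$ only says that $xW_P$ and $yW_P$ lie in the same left $W_P$-orbit on $W/W_P$; it does not force $xW_P=yW_P$. In fact the conclusion $\beta\notin R_P^+$ is false in general. Take $W=S_3$, $\Sigma_P=\{\alpha_1\}$, $x=s_2$, $y=s_1s_2$ (both in $W^P$). Then $\gamma=\alpha_1+\alpha_2\in R^+\setminus R_P^+$ satisfies $xs_\gamma W_P=yW_P$, yet the \emph{only} positive root $\beta$ with $s_\beta xW_P=yW_P$ is $\beta=\alpha_1\in R_P^+$. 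So condition (b) as literally written is too strong; the assertion that is actually true---and is all that is used downstream, e.g.\ in \Cref{rem:gammabeta}---is simply $\beta\in R^+$. With that correction your conjugation argument goes through unchanged, and your uniqueness proof for $\beta$ already applies to arbitrary $\beta\in R^+$, since you derived $\langle x(\omega),\beta^\vee\rangle\neq 0$ from $x\neq y$ rather than from $\beta\notin R_P^+$.
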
 

\begin{proof} The equivalence of (a) and (b), and the uniqueness of $\beta$ are proved in 
\cite[Lemma 4.1]{fulton.woodward}. This lemma also shows that $s_\gamma$ 
is unique up to a conjugation by an element in $W_P$. If 
$s_\gamma W_P = s_{\gamma'}W_P$ then $\gamma = \gamma'$ by 
\cite[Lemma 2.2]{buch.m:nbhds}.\end{proof}
\begin{remark}\label{rem:gammabeta} When we analyze edges of the Bruhat graph below, 
we need to consider situations where
$x W_P = y s_{\gamma'} W_P < y W_P= x s_\gamma W_P$ for $x,y \in W^P$. As observed in the proof 
above $s_{\gamma'}$ is a $W_P$ conjugate of $s_\gamma$. Furthermore, if
$xW_P < s_\beta x W_P = x s_\gamma W_P$ then $s_\beta x > x$, $\gamma = x^{-1}(\beta)$,
and for any weight $\lambda$ such that $Stab_W(\lambda) = W_P$,
\[ \langle \lambda , \gamma^\vee \rangle = \langle \lambda, (\gamma')^\vee \rangle = \langle x(\lambda), \beta^\vee \rangle 
\quad \/. \]
\end{remark}
For $v<w\in W^P$, recall that $[v, w]^P:=\{x\in W^P\mid v\leq x\leq w\}$ and set $[v,w)^P=[v, w]^P\setminus \{ w \}$.
\begin{defin}[Admissible function] Let $v<w\in W^P$. An \textit{admissible function},
$\Lambda=\Lambda_{v,w}:[v, w]^P \to X^{*}(T)_P$ is any assignment $x \mapsto \lambda_x$ 
such that $x(\lambda_x)\neq w(\lambda_x)$ for all $x\in [v, w)^P$.
\end{defin} Admissible functions always exist: one example is given by 
\begin{equation}\label{E:standardLa} \Lambda(x)=\varpi_P:= \displaystyle\sum_{\alpha_i\in \Sigma \setminus \Sigma_P}\varpi_i \/; \quad 
\forall x\in [v,w]^P \/. \end{equation} (This follows because $w(\varpi_P) = \varpi_P$ if and only if $w \in W_P$; cf.~\cite[Ch.5, \S 4.6]{bourbaki:Lie4-6}.) 
We call this the {\em standard} admissible function. {Another important situation is when $v<w$ are $\pi$-minuscule
and $P$ corresponds to the stabilizer of $\pi$, i.e., $W_P = W_\pi$. Then the constant function $\Lambda \equiv \pi$ is
admissible.}
Admissible functions appeared in 
\cite[\S 7]{mihalcea:eqqhom} in the study of the equivariant quantum 
cohomology ring of flag manifolds.

\begin{defin}[$\Lambda$-Bruhat graph]\label{def:ref_di} Let $v,w \in W^P$ such that $v <w$, and let $\Lambda: [v,w]^P \to X^*(T)_P$ be an admissible function. 
To this data we associate a decorated directed graph $\Gamma=(V,E)$ and two functions, 
$\mathcal{W}_\Lambda: V \to {X^*(T)}$
and $m_\Lambda: E \to \mathbb{Z}_+$, 
as follows:
\begin{enumerate} \item The set of vertices is defined by $V=[v,w]^P$.
\item There is an oriented edge $x{\to} y$ whenever 
$xW_P < yW_P$ and
$yW_P=x s_\gamma W_P$.
\item Each vertex $x \in V$ is decorated by a weight
\[ \mathcal{W}_\Lambda(x) := x(\lambda_x) - w(\lambda_x) \/. \]  
By the definition of $\Lambda$, if $x \neq w$, the weight $\mathcal{W}_\Lambda(x) \neq 0$.
We will call $\mathcal{W}_\Lambda: V \to X^*(T)$ the 
{\bf $\Lambda$-weight function} of the graph.
\item Each edge $xW_P {\to} y W_P=x s_\gamma W_P$ is decorated by the 
{\bf Chevalley multiplicity}: 
\[ m_\Lambda(x,y) := \langle \lambda_x, \gamma^\vee \rangle \/. \]
\end{enumerate}
We will refer to this graph as the {\bf $\Lambda$-Bruhat graph} determined by the triple $(v,w,\Lambda)$.
If $\Lambda \equiv \pi$ is a constant admissible function, then we set 
$\mathcal{W}_\pi = \mathcal{W}_\Lambda$, $m_\pi = m_\Lambda$.  
\end{defin}
If one ignores the orientation and the admissible function $\Lambda$, then the $\Lambda$-Bruhat graph is the 
$1$-skeleton of the $T$-action on $G/P$. This is the graph utilized in the GKM theory, and to calculate curve neighborhoods of 
Schubert varieties \cite{buch.m:nbhds}; it is also called the Bruhat graph. 
It contains the (unoriented) quantum 
Bruhat graph from \cite{BFP99} and \cite{LNSSS}, and it is related to 
``Games with hook structure'' defined by Kawanaka \cite{Kaw15}.

Next we record the following lemma.
\begin{lemma}\label{maxP} (a) Let $(v,w,\Lambda)$ be a  $\Lambda$-Bruhat graph and let $(x,y)$ be an edge such that
$yW_P = s_\beta x W_P = x s_\gamma W_P$. If $\Lambda(x)=\Lambda(y)=\lambda$ then
$x(\lambda) - y (\lambda) = m_\Lambda(x,y) \beta$.

(b) Let $\pi \in X(T)_P$ be a dominant integral weight and assume we are given a constant admissible function $\calW_\pi \equiv \pi$.
Then for any edge $x\overset{\beta}{\to} y$ as in (a) (with the notation from~\Cref{defin:order}): 
\begin{equation}\label{E:Wla} 
\mathcal{W}_\pi(x) = \mathcal{W}_\pi(y) + m_\pi(x,y) \beta \/.
\end{equation}
 \end{lemma}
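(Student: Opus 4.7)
The plan is to prove (a) by a direct computation exploiting that $\lambda \in X^*(T)_P$ is $W_P$-invariant, together with the identification between $\beta$ and $\gamma$ spelled out in \Cref{rem:gammabeta}. Part (b) will then follow as an immediate corollary of (a).

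For (a), start from the coset equality $yW_P = s_\beta x W_P = x s_\gamma W_P$. Pick a representative so that $y = s_\beta x z$ for some $z \in W_P$. Because $\lambda \in X^{*}(T)_P$ is fixed pointwise by $W_P$, this gives $y(\lambda) = s_\beta x z(\lambda) = s_\beta x(\lambda)$, hence
\[
x(\lambda) - y(\lambda) \;=\; x(\lambda) - s_\beta x(\lambda) \;=\; \langle x(\lambda), \beta^\vee \rangle\, \beta.
\]
Now invoke \Cref{rem:gammabeta}: since $xW_P < yW_P = s_\beta x W_P = x s_\gamma W_P$, we have $\gamma = x^{-1}(\beta)$, and for every $\mu \in X^{*}(T)_P$ the remark records $\langle \mu, \gamma^\vee \rangle = \langle x(\mu), \beta^\vee \rangle$. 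Applying this with $\mu = \lambda$, and recalling from \Cref{def:ref_di}(4) that $m_\Lambda(x,y) = \langle \lambda_x, \gamma^\vee \rangle = \langle \lambda, \gamma^\vee \rangle$, we obtain $\langle x(\lambda), \beta^\vee \rangle = m_\Lambda(x,y)$, which gives the desired equality $x(\lambda) - y(\lambda) = m_\Lambda(x,y)\,\beta$.

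For (b), the constant function $\Lambda \equiv \pi$ is admissible on $[v,w]^P$ when $\pi$ is dominant integral with $W_P \subseteq \Stab_W(\pi)$, so in particular $\Lambda(x)=\Lambda(y)=\pi$ and part (a) applies with $\lambda=\pi$, yielding $x(\pi)-y(\pi)=m_\pi(x,y)\,\beta$. By definition $\mathcal{W}_\pi(u)=u(\pi)-w(\pi)$, so subtracting $w(\pi)$ from $x(\pi)=y(\pi)+m_\pi(x,y)\,\beta$ gives \eqref{E:Wla}.

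The whole argument is essentially bookkeeping; the only real point is to transit carefully between the ``$\beta$ description'' and the ``$\gamma$ description'' of an edge in the $\Lambda$-Bruhat graph, and this is precisely what \Cref{rem:gammabeta} packages (together with the $W_P$-invariance of weights in $X^{*}(T)_P$). No further obstacle is expected.
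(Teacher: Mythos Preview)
Your proof is correct and essentially the same as the paper's. The only cosmetic difference is that the paper computes via the $\gamma$-side first, writing $x(\lambda)-y(\lambda)=x(\lambda)-xs_\gamma(\lambda)=\langle\lambda,\gamma^\vee\rangle\,x(\gamma)=m_\Lambda(x,y)\,\beta$, whereas you compute via the $\beta$-side first and then invoke \Cref{rem:gammabeta} to identify $\langle x(\lambda),\beta^\vee\rangle$ with $\langle\lambda,\gamma^\vee\rangle$; both routes use the same ingredients ($W_P$-invariance of $\lambda$ and \Cref{rem:gammabeta}) and part (b) is handled identically.
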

\begin{proof} 
By definition,
\begin{align*}
x(\lambda) - y (\lambda)=& x(\lambda)-xs_\gamma(\lambda)
=\langle \lambda,\gamma^\vee\rangle x(\gamma)
=m_\Lambda(x,y) \beta,
\end{align*}
where the last equality follows from \Cref{rem:gammabeta}. This proves part (a). Part (b) follows form (a) and 
the definition of $\calW_\pi$.\end{proof}

In the next section we will analyze $\Lambda$-Bruhat graphs for minuscule elements, and we will need the following 
result.
\begin{prop}\label{cor:multweight} Let $v < w$ be $\pi$-minuscule elements, and let $P$ be the parabolic 
subgroup defined by $W_P = Stab_W(\pi)$. Consider the 
$\Lambda$-Bruhat graph $(v,w,\Lambda)$ with 
the constant admissible function $\Lambda \equiv \pi$. Then the following hold:

(1) Let $x,y \in [v,w]^P$ be two elements and let $x \to y$ be an edge such that
$x W_P < y W_P = x s_\gamma W_P$. Then the Chevalley multiplicity 
\[ m_\pi(x,y) = \langle \pi, \gamma^\vee \rangle =1 \/. \] 

(2) The $\Lambda\equiv \pi$-weight of the vertex $x$ equals 
\[\mathcal{W}_\pi(x) = x(\pi) - w(\pi) = 
\sum_{i=0}^s \beta_i \/, \]
where 
$x=x_0\overset{\beta_1}{\to} x_{1}\to \ldots \to x_{s-1}\overset{\beta_{s}}{\to} x_s =w$ 
is any chain in the $\Lambda$-Bruhat graph.

(3) The admissible function $\calW_\pi$ is injective.
\end{prop}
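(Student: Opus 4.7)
The plan is to attack the three parts in order, using that every vertex of the $\Lambda$-Bruhat graph is again $\pi$-minuscule, so that the lemmas from \S\ref{sec:minuscule} apply uniformly.

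First I would establish the key auxiliary fact that every element of $[v,w]^{P}$ is $\pi$-minuscule. Since $v,w$ are both $\pi$-minuscule and weak/strong Bruhat orders coincide on $[v,w]^{P}$ by \Cref{prop:weakstrong}, every $x \in [v,w]^{P}$ admits a saturated chain to $w$ of covering relations in the Bruhat graph; at each covering step we may invoke \Cref{lem:wminuscule}(b) to descend $\pi$-minuscularity from $w$ down to $x$. Now for part (1), consider an edge $x \to y$ with $xW_P < yW_P = xs_\gamma W_P$, and let $\beta$ be the unique positive root with $s_\beta x W_P = y W_P$ (\Cref{lem:P}). Because $y$ is $\pi$-minuscule, \Cref{lemma:ubetav}(c) (applied to $x \overset{\beta}{\to} y$) gives $\langle y(\pi), \beta^{\vee}\rangle = -1$. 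Since $W_P = \Stab_W(\pi)$ and $y \in s_\beta x W_P$, we have $y(\pi) = s_\beta x(\pi) = x(\pi) - \langle x(\pi), \beta^{\vee}\rangle\beta$, so $\langle y(\pi), \beta^{\vee}\rangle = -\langle x(\pi), \beta^{\vee}\rangle$, forcing $\langle x(\pi), \beta^{\vee}\rangle = 1$. Combining with \Cref{rem:gammabeta}, which tells us $\gamma = x^{-1}(\beta)$ and $\langle \pi, \gamma^\vee \rangle = \langle x(\pi), \beta^\vee \rangle$, yields $m_\pi(x,y) = \langle \pi, \gamma^{\vee}\rangle = 1$.

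For part (2), the equality $\mathcal{W}_\pi(x) = x(\pi) - w(\pi)$ is immediate from \Cref{def:ref_di} with $\lambda_x \equiv \pi$. To obtain the telescoping sum, I apply \Cref{maxP}(b) edge-by-edge: for each $x_{i-1} \overset{\beta_i}{\to} x_i$ we have $\mathcal{W}_\pi(x_{i-1}) = \mathcal{W}_\pi(x_i) + m_\pi(x_{i-1},x_i)\,\beta_i$, and part (1) collapses $m_\pi(x_{i-1},x_i)$ to $1$. Summing from $i=1$ to $s$ and using $\mathcal{W}_\pi(x_s) = \mathcal{W}_\pi(w) = 0$ gives $\mathcal{W}_\pi(x) = \sum_{i=1}^{s}\beta_i$, as desired (I read the indexing in the statement as a typo for $\sum_{i=1}^{s}$).

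For part (3), suppose $\mathcal{W}_\pi(x) = \mathcal{W}_\pi(y)$ for some $x,y \in [v,w]^{P}$. Then $x(\pi) = y(\pi)$, so $y^{-1}x \in \Stab_W(\pi) = W_P$, which means $xW_P = yW_P$; since $x$ and $y$ are both minimal length representatives of this coset, $x = y$. The main conceptual step is the first one -- verifying that $\pi$-minuscularity propagates down the interval $[v,w]^{P}$, so that \Cref{lemma:ubetav}(c) is available at every edge -- after that, the rest is bookkeeping.
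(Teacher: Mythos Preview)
Your proposal is correct and follows essentially the same approach as the paper. The only minor variation is in part (1): the paper works with the root $\gamma'$ satisfying $xW_P = y s_{\gamma'}W_P$, observes $ys_{\gamma'}<y$, and invokes \Cref{lem:gamma} directly to get $\langle \pi,(\gamma')^\vee\rangle=1$, then appeals to \Cref{rem:gammabeta} for $m_\pi(x,y)=\langle \pi,(\gamma')^\vee\rangle$; your route via $\beta$, \Cref{lemma:ubetav}(c), and the reflection computation $\langle y(\pi),\beta^\vee\rangle=-\langle x(\pi),\beta^\vee\rangle$ reaches the same endpoint with one extra step. Parts (2) and (3) match the paper's argument exactly. (Your invocation of \Cref{prop:weakstrong} in the auxiliary step is harmless but unnecessary: any saturated chain from $x$ up to $w$ in $W^P$ already suffices to propagate $\pi$-minuscularity via repeated use of \Cref{lem:wminuscule}(b).)
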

\begin{proof} By \Cref{lem:wminuscule}(2), since $w$ is $\pi$-minuscule, each element in 
$[v,w]^P$ is again 
$\pi$-minuscule. Let $\gamma'$ be the positive root such that $xW_P = y s_{\gamma'}W_P$ (cf.~\Cref{lemma:ubetav}).
From \Cref{rem:gammabeta}, the multiplicity
$m_\pi(x,y) = \langle \pi, (\gamma')^\vee \rangle$. Since $y s_{\gamma'} W_P < y W_P$ 
implies that $y s_{\gamma'} < y$, and since $y$ is $\pi$-minuscule,
$\langle \pi, (\gamma')^\vee \rangle = 1$ from \Cref{lem:gamma}. This proves part (1). 

From \Cref{maxP}(b) it follows that for any chain as in the hypothesis,
\[\calW_\pi(x) = x(\pi) - w(\pi) = \sum m_\pi(x_{i-1}, x_{i}) \beta_i \/. \]
Since by part (1) all multiplicities $m_\pi(x_{i-1}, x_{i})=1$, part (2) follows.
Finally, if $\calW_\pi(x) = \calW_\pi(y)$ then $x(\pi) = y(\pi)$, thus $x=y$ in $W^P$,
proving part (3).
\end{proof}

We provide examples of $\Lambda$-Bruhat graphs below; in these examples we will remove those edges of
Chevalley multiplicity $0$, as they will not contribute to our algorithm for the SM structure constants.
We will encode the
function $\mathcal{W}_\Lambda$ by representing a vertex $x \neq w$ as $\frac{x}{\mathcal{W}_\Lambda(x)}$.
\begin{example}\label{ex:A2} Consider $G=\mathrm{SL}_3(\mathbb{C})$ and $P=B$. In this case
$G/B= \mathrm{Fl}(3)$, the flag variety parametrizing complete flags $(F_1 \subset F_2 \subset \mathbb{C}^3)$. 
Set $v:=id$ and $w:=w_0 = s_1 s_2 s_1$ and consider two functions $\Lambda_i: W \to X^*(T)^+$ defined by:
\[ \Lambda_1(x) = \begin{cases} \varpi_1 & \textrm{if } x \neq s_2 s_1 \/; \\ \varpi_2 & \textrm{if } x = s_2 s_1 \/. \end{cases}
\quad \quad 
\Lambda_2(x) \equiv n_1 \varpi_1 + n_2 \varpi_2 \/, \quad (n_1, n_2 >0) \/.\]
One may easily check that both are admissible functions. The corresponding graphs for the triple $(id,w_0,\Lambda_i)$ are below, from left to right;
the edges are decorated with their Chevalley multiplicities, and a vertex $u \neq w$ is replaced by its decoration $\frac{u}{\mathcal{W}_{\Lambda_i}(u)}$.

\begin{tikzpicture}

\node (v0) at (3,6){$s_1 s_2 s_1$};
\node (v1) at (1,4){$\frac{s_1 s_2}{\alpha_2}$};
\node (v2) at (5,4){$\frac{s_2 s_1}{\alpha_1}$};
\node (v3) at (1,2){$\frac{s_1}{\alpha_2}$};
\node (v4) at (5,2){$\frac{s_2}{\alpha_1+\alpha_2}$};
\node (v5) at (3,0){$\frac{id}{\alpha_1+\alpha_2}$};

\draw[<-] (v0)--(v1) node [pos=0.50,  above, sloped] {$_{1}$};
\draw[<-] (v0)--(v2) node [pos=0.50,  above, sloped] {$_{1}$};
\draw[<-] (v0)--(v5) node [pos=0.25,  above, sloped] {$_{1}$};
\draw[<-] (v1)--(v4) node [pos=0.25,  above, sloped] {$_{1}$};
\draw[<-] (v2)--(v3) node [pos=0.25,  above, sloped] {$_{1}$};
\draw[<-] (v2)--(v4) node [pos=0.50,  above, sloped] {$_{1}$};
\draw[<-] (v3)--(v5) node [pos=0.50,  above, sloped] {$_{1}$};
\end{tikzpicture}
\hspace{2cm}
\begin{tikzpicture}

\node (v0) at (3,6){$s_1 s_2 s_1$};
\node (v1) at (1,4.5){$\frac{s_1 s_2}{n_1 \alpha_2}$};
\node (v2) at (5,4.5){$\frac{s_2 s_1}{n_2 \alpha_1}$};
\node (v3) at (1,1.5){$\frac{s_1}{n_2 \alpha_1 + (n_1 + n_2) \alpha_2}$};
\node (v4) at (5,1.5){$\frac{s_2}{(n_1+n_2)\alpha_1+n_1 \alpha_2}$};
\node (v5) at (3,0){$\frac{id}{(n_1+n_2)(\alpha_1 +\alpha_2)}$};

\draw[<-] (v2) -- (v3) node [pos=0.25,  above, sloped] {$_{n_1+n_2}$};

\draw[<-] (v1) -- (v4) node [pos=0.25, above, sloped] {$_{n_1+n_2}$};

\draw[<-] (v0) -- (v5) node [pos=0.25, above, sloped] {$_{n_1+n_2}$} ;

\draw[<-] (v0) -- (v1) node [pos=0.55,  above, sloped] {$_{n_1}$} ;

\draw[<-] (v0) -- (v2)  node [pos=0.55,  above, sloped] {$_{n_2}$};

\draw[<-] (v3) -- (v5)  node [pos=0.55,  above, sloped] {$_{n_1}$} ;

\draw[<-] (v4) -- (v5)  node [pos=0.55,  above, sloped] {$_{n_2}$} ;

\draw[<-] (v1) -- (v3)  node [pos=0.55, above, sloped] {$_{n_2}$};

\draw[<-] (v2) -- (v4)  node [pos=0.55,  above, sloped] {$_{n_1}$} ;
\end{tikzpicture}
\end{example}

\begin{example}\label{ex:B3} Consider the Lie type $B_3$, i.e.~$G = \mathrm{SO}(7,\mathbb{C})$.
The simple roots are $\Delta = \{ \alpha_1, \alpha_2, \alpha_3 \}$ with $\alpha_3$ short.~We choose 
$P$ to be the maximal parabolic determined by the set $\Delta_P = \{ \alpha_1, \alpha_3 \}$. 
Geometrically, $G/P$ is the submaximal isotropic Grassmannian
$\mathrm{IG}(2,7)$ parametrizing subspaces of dimension $2$ which 
are isotropic with respect to the non-degenerate symmetric 
form in $\mathbb{C}^7$. We pick the standard admissible function
$\Lambda(x) \equiv \varpi_2$ and $v=id,w=s_2 s_1 s_3 s_2$. All simple edges have Chevalley 
multiplicity $1$, and the double edges have multiplicity $2$.

\begin{minipage}{11.5cm}
\begin{tikzpicture}

\node (v0) at (10,8){$s_2 s_1 s_3 s_2$};

\node (v1) at (10.6,6){$\frac{s_1 s_3 s_2}{2 \alpha_2}$};

\node (v2) at (15,6){$\frac{s_2 s_3 s_2}{\alpha_1+\alpha_2}$};

\node (v3) at (6,4){$\frac{s_1 s_2}{2\alpha_2+2 \alpha_3}$};

\node (v4) at (10.3,4){$\frac{s_3 s_2}{\alpha_1+2\alpha_2}$};

\node (v5) at (9,2){$\frac{s_2}{\alpha_1+2\alpha_2+2\alpha_3}$};

\node (v6) at (11,0){$\frac{id}{\alpha_1+3\alpha_2+2\alpha_3}$};

\draw[<-,double distance=0.8pt] (v0)-- node[right]{}(v1);
\draw[<-] (v0)--node[right] {}(v2);
\draw[<-,double distance=0.8pt] (v0)--node[left] {}(v3);

\draw[<-,double distance=0.8pt] (v1)--node[above] {}(v3);
\draw[<-] (v1)--node {}(v4);

\draw[<-] (v2)--node[left] {}(v4);
\draw[<-] (v2)--node {}(v5);
\draw[<-,double distance=0.8pt] (v2)--node[right] {}(v6);

\draw[<-] (v3)--node[right] {}(v5);
\draw[<-,double distance=0.8pt] (v4)--node {}(v5);
\draw[<-] (v5)--node{}(v6);

\draw[bend right,distance=2cm] (v3) to node{}(v6)[<-];
\draw[<-] (v4)--node{}(v6);

\draw[bend right,distance=1cm] (v0) to 
node[below] {${}$} 
(v5) [<-];
\draw[bend left,distance=1.5cm] (v1) to
node[below] {} (v6)[<-];
\end{tikzpicture}
\end{minipage}
\end{example}

We now give a formula for the coefficients $d_{u,w}^w$ from \Cref{equ:CSMLR} in terms of summation 
over weighted paths in the $\Lambda$-Bruhat graph. 
\begin{prop}\label{prop:d_P}
For any $v\leq  w\in W^P$, fix an admissible function $\Lambda:[v,w]^P\to X^{*}(T)_P$, 
and let $\Gamma=(v,w,\Lambda)$ be the associated $\Lambda$-Bruhat graph. Then for $u\in [v,w]^P$,
\begin{equation}\label{eq:d}
d_{u,w}^w=\left(
\sum \frac{m_\Lambda(x_{r},x_{r-1})}{\mathcal{W}_\Lambda(x_{r})} \cdot \frac{m_\Lambda(x_{r-1},x_{r-2})}{\mathcal{W}_\Lambda(x_{r-1})}\cdot \ldots \cdot
\frac{m_\Lambda(x_1,x_0)}{\mathcal{W}_\Lambda(x_1)} \right)d_{w,w}^w .
\end{equation} where the sum is over integers $r \ge 0$, and over all directed paths 
$u= x_r \overset{}{\to} x_{r-1} \overset{}{\to} \ldots \overset{}{\to} x_0 = w$ in $\Gamma$. 
\end{prop}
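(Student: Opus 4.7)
The plan is to reduce the claim to the Molev--Sagan type recursion \Cref{lem:recLR} by a clever specialization, and then induct on the length difference $\ell(w)-\ell(u)$.

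First I would apply \Cref{lem:recLR} with $v=w$ and $\lambda = \Lambda(u) = \lambda_u$. On the right-hand side the sum $\sum_{y<w} c^w_{\lambda,y} d^y_{u,w}$ vanishes identically, since \Cref{lem:CSMLR}(a) requires $w \le y$ for $d^y_{u,w} \neq 0$, which is incompatible with $y<w$. Using the Chevalley formula for SM classes (second equation of \Cref{thm:chevalley}) I would read off the remaining coefficients: the diagonal difference is
\[
c^w_{\lambda_u, w} - c^u_{\lambda_u, u} \;=\; w(\lambda_u) - u(\lambda_u) \;=\; -\mathcal{W}_\Lambda(u),
\]
which is nonzero for $u \neq w$ precisely by admissibility of $\Lambda$. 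For $u<x$ in $[v,w]^P$ with the unique positive root $\gamma$ satisfying $xW_P = us_\gamma W_P$ (guaranteed by \Cref{lem:P}, and using that $\lambda_u \in X^*(T)_P$ kills the $R^+_P$ contributions in the Chevalley sum), the same formula yields $c^x_{\lambda_u, u} = -\langle \lambda_u, \gamma^\vee\rangle = -m_\Lambda(u,x)$, and the edges $u \to x$ of the $\Lambda$-Bruhat graph are exactly these covers. Substituting everything produces the ``one-step'' recursion
\[
d^w_{u, w} \;=\; \frac{1}{\mathcal{W}_\Lambda(u)} \sum_{u \to x} m_\Lambda(u, x)\, d^w_{x, w}.
\]

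With this in hand the induction is straightforward. The base case $u=w$ corresponds to the empty path ($r=0$) and the trivial identity $d^w_{w,w} = 1 \cdot d^w_{w,w}$. For the inductive step, applying the inductive hypothesis to each $d^w_{x,w}$ expresses it as a weighted sum over paths $x=x_{r-1}\to\cdots\to x_0=w$; prepending the edge $u=x_r \to x$ extends each such path to a path from $u$ to $w$, and the prefactor $m_\Lambda(u,x)/\mathcal{W}_\Lambda(u)$ supplies exactly the missing leading factor of the product. Summing over edges $u\to x$ and inner tail-paths reproduces \Cref{eq:d}.

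I expect no serious obstacle; the work is essentially careful bookkeeping. The main subtleties to verify are (i) identifying the Chevalley coefficients $c^x_{\lambda_u,u}$ with the edge multiplicities $m_\Lambda(u,x)$ of the $\Lambda$-Bruhat graph, which uses the uniqueness statement of \Cref{lem:P} together with \Cref{rem:gammabeta} and the fact that $\alpha\in R^+_P$ terms drop out because $\lambda_u\in X^*(T)_P$, and (ii) noting that admissibility of $\Lambda$ is precisely the non-vanishing of $\mathcal{W}_\Lambda(u)$ on $[v,w)^P$, which is exactly what legitimizes the division at each step of the recursion.
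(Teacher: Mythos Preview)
Your proposal is correct and follows essentially the same route as the paper: specialize the recursion of \Cref{lem:recLR} at $v=w$ and $\lambda=\Lambda(u)$, observe that the $\sum_{y<w}$ term vanishes by \Cref{lem:CSMLR}(a), identify the surviving Chevalley coefficients with the edge multiplicities to obtain the one-step recursion $d^w_{u,w}=\mathcal{W}_\Lambda(u)^{-1}\sum_{u\to x} m_\Lambda(u,x)\, d^w_{x,w}$, and then induct on $\ell(w)-\ell(u)$. Your treatment is in fact slightly more explicit than the paper's about the sign bookkeeping and about why admissibility is exactly what makes the division legitimate.
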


\begin{proof}
There is nothing to do when $u=w$. If $u<w$, set $\lambda_u:= \Lambda(u)$. Then from
\Cref{eq:recLR},
\[ \begin{split} d_{u,w}^w & = \frac{1}{w(\lambda_u) - u(\lambda_u)} \sum_{u < x} c_{\lambda_u, u}^x d_{x,w}^w
 = \frac{1}{w(\lambda_u) - u(\lambda_u)} \sum_{x= us_\alpha >u ; \alpha >0} 
 c_{\lambda_u, u}^x d_{x,w}^w
\\ & = \frac{1}{w(\lambda_u) - u(\lambda_u)} \sum_{x= us_\alpha >u ; \alpha >0} \langle -\lambda_u , \alpha^\vee \rangle  d_{x,w}^w = 
\frac{1}{\mathcal{W}_\Lambda(u)} \sum_{u \overset{}{\to} x } m_\Lambda(u,x) d_{x,w}^w \\ &
= \sum_{u \overset{}{\to} x } \frac{m_\Lambda(u,x)}{\mathcal{W}_\Lambda(u)} d_{x,w}^w \/. \end{split}
\] Here the second equality follows from \Cref{thm:chevalley} and the definition of $c_{\lambda, u}^x$ in \Cref{equ:ccoeff}, and the rest are from definitions. Then the statement follows
by induction descending from $w$, on those elements $x$ such that $u<x\le w$.
\end{proof}
From \Cref{lem:CSMLR} we deduce another interpretation of the sum in the previous proposition:
\begin{corol}\label{thm:LocCSM2} Let $v \le w$ be two elements in $W^P$. 
With the hypotheses from \Cref{prop:d_P}, the following hold:
\begin{equation}\label{hook-formula}
\frac{\ssm(Y(v))|_w}{\ssm(Y(w))|_w} =
\sum
\frac{m_\Lambda(x_r,x_{r-1})}{\mathcal{W}_\Lambda(x_r)}\cdot \frac{m_\Lambda(x_{r-1},x_{r-2})}{\mathcal{W}_\Lambda(x_{r-1})}\cdot \ldots \cdot \frac{m_\Lambda(x_{1},x_{0})}{\mathcal{W}_\Lambda(x_{1})},
\end{equation} where the sum is over integers $r \ge 0$, and over all directed paths 
$v\leq x_r \overset{}{\to} x_{r-1} \overset{}{\to} \ldots \overset{}{\to} x_0 = w$ in $\Gamma$.
\end{corol}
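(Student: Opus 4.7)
The plan is to derive the identity by combining \Cref{prop:d_P}, \Cref{lem:CSMLR}(b), and the additivity of equivariant CSM classes along the Bruhat cell decomposition of $Y(v)$.

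First, I would write the cell decomposition $Y(v) = \bigsqcup_{u \in W^P,\, u \ge v} Y(u)^\circ$, so that by the additivity of the equivariant MacPherson transformation applied to $\one_{Y(v)} = \sum_{u \ge v} \one_{Y(u)^\circ}$, we obtain
\begin{equation*}
\ssm(Y(v)) \;=\; \sum_{u \in W^P,\, u \ge v} \ssm(Y(u)^\circ).
\end{equation*}
Localizing at $wP$, each term $\ssm(Y(u)^\circ)|_w$ vanishes unless $wP \in \overline{Y(u)^\circ} = Y(u)$, i.e.~unless $u \le w$. Therefore only $u \in [v,w]^P$ contribute, and
\begin{equation*}
\ssm(Y(v))|_w \;=\; \sum_{u \in [v,w]^P} \ssm(Y(u)^\circ)|_w.
\end{equation*}
Applying the same identity with $v$ replaced by $w$, only the single term $u = w$ survives, which gives $\ssm(Y(w))|_w = \ssm(Y(w)^\circ)|_w$.

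Next I would invoke \Cref{lem:CSMLR}(b), which identifies $d_{u,w}^w = \ssm(Y(u)^\circ)|_w$ for each $u \in [v,w]^P$, and in particular $d_{w,w}^w = \ssm(Y(w))|_w$. Substituting the path-sum formula for $d_{u,w}^w$ from \Cref{prop:d_P} into the displayed identity above yields
\begin{equation*}
\ssm(Y(v))|_w \;=\; d_{w,w}^w \cdot \sum_{u \in [v,w]^P}\; \sum_{u = x_r \to \cdots \to x_0 = w} \frac{m_\Lambda(x_r,x_{r-1})}{\mathcal{W}_\Lambda(x_r)} \cdot \ldots \cdot \frac{m_\Lambda(x_1,x_0)}{\mathcal{W}_\Lambda(x_1)}.
\end{equation*}
The nested sum on the right reorganizes as a single sum over all directed paths $v \le x_r \to \cdots \to x_0 = w$ in $\Gamma$, simply grouped by the starting vertex $x_r = u$. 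Dividing through by $d_{w,w}^w = \ssm(Y(w))|_w$ proves \Cref{hook-formula}.

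There is essentially no obstacle here once the two structural inputs are in place: the path-sum expression for the diagonal coefficients $d_{u,w}^w$ from \Cref{prop:d_P}, and the cell-decomposition identity $\ssm(Y(v)) = \sum_{u \ge v} \ssm(Y(u)^\circ)$. The only mildly delicate bookkeeping step is the identification $\ssm(Y(w))|_w = \ssm(Y(w)^\circ)|_w$, which I would justify by noting that $Y(w)^\circ$ is the unique Schubert cell in $Y(w)$ whose closure contains the fixed point $wP$, so all other terms in the cell expansion of $\ssm(Y(w))$ vanish upon localization at $w$.
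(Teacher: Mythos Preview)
Your proof is correct and follows essentially the same approach as the paper's: both use the additivity of CSM classes over the cell decomposition $Y(v)=\bigsqcup_{u\ge v}Y(u)^\circ$, invoke \Cref{lem:CSMLR}(b) to identify $\ssm(Y(u)^\circ)|_w=d_{u,w}^w$ (and in particular $\ssm(Y(w))|_w=d_{w,w}^w$), and then apply \Cref{prop:d_P}. The paper's proof is simply a more compressed version of what you wrote.
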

\begin{proof} By \Cref{lem:CSMLR} and the additivity of CSM classes, 
the left hand side of \eqref{hook-formula} equals to 
\[\displaystyle \frac{\ssm(Y(v))|_w}{\ssm(Y(w))|_w}=\frac{\sum_{v\leq u}\ssm(Y(u)^\circ)|_w}{\ssm(Y(w)^\circ)|_w} =\sum_{v\leq u\leq w} \frac{d_{u,w}^{w}}{d_{w,w}^{w}}
 \/.
\] 
Here we also utilized that $\ssm(Y(u)^\circ)|_w=0$ if $w \ngeq u$.
Then the claim follows from \Cref{prop:d_P}.\end{proof}

\begin{remark} By \Cref{prop:loc} and \Cref{prop:eqmult}, the left hand side of \Cref{hook-formula} 
is an equivariant multiplicity, which may be calculated explicitly:
\begin{equation}\label{equ:explicit} \frac{\ssm(Y(v))|_w}{\ssm(Y(w))|_w} = 
e_{w,G/P}(\csm(R_w^v)) =\frac{\sum \beta_{j_1}\beta_{j_2}\cdot \ldots \cdot \beta_{j_k}}{\beta_1\cdot \ldots \cdot \beta_\ell} \/;
\end{equation}
here the summation is over $ 1\leq j_1<j_2<\cdots <j_k\leq \ell$ such that 
$vW_P\leq s_{i_{j_1}}s_{i_{j_2}}\cdots s_{i_{j_k}}W_P$ (the notation is the same as in \Cref{prop:loc}). 
As we observed in \Cref{thm:smoothP}, if $Y(v)$ is smooth at $wP$, then 
both the numerator and the denominator of this expression may be written as products.
This is the key observation which leads to a generalization of Nakada's hook formula in the next section.\end{remark}

\section{A generalized colored hook formula}\label{sec:genhook}
In this section we prove the main result of this paper - 
the generalization of Nakada's colored hook formula, together with several corollaries 
of it.
\subsection{The colored hook formula and consequences}\label{sec:col-and-conseq}
Recall that for $v < w \in W$, $S(w/v):=\{\beta\in R^+\mid v\le s_\beta w<w\}$.
\begin{theorem}\label{thm:genNak} Let $v\leq  w\in W^P$, and fix an admissible 
function $\Lambda:[v,w]^P\to X^{*}(T)_P$ with the associated 
$\Lambda$-Bruhat graph $\Gamma=(v,w,\Lambda)$. Then: 
\begin{center}
$Y(v)\subset G/P$ is smooth at $wP\in G/P$  if and only if
\[
\sum\frac{m_\Lambda(x_r,x_{r-1})}{\mathcal{W}_\Lambda(x_r)}\cdot \frac{m_\Lambda(x_{r-1},x_{r-2})}{\mathcal{W}_\Lambda(x_{r-1})}\cdot \ldots \cdot \frac{m_\Lambda(x_{1},x_{0})}{\mathcal{W}_\Lambda(x_{1})}
\;=\;
\prod_{\beta \in S(w/v)} \left(1+\frac{1}{\beta}\right),\]
\end{center}
where the sum is over integers $r \ge 0$, and over all directed paths 
$v\leq x_r \overset{}{\to} x_{r-1} \overset{}{\to} \ldots \overset{}{\to} x_0 = w$ in $\Gamma$.
\end{theorem}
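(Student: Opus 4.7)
The plan is to observe that this theorem is essentially the concatenation of three results already established in the paper, so the proof is a short synthesis rather than a new computation.

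First, I would apply \Cref{thm:LocCSM2}, which states that for any admissible $\Lambda$, the sum over weighted paths appearing on the left-hand side of \Cref{E:intro-main} equals the localization ratio $\ssm(Y(v))|_w / \ssm(Y(w))|_w$. This is the key reduction: it converts the combinatorial/path-theoretic expression into a geometric/cohomological quantity that is independent of the choice of $\Lambda$. Thus the theorem reduces to showing that $Y(v)$ is smooth at $wP$ if and only if
\[
\frac{\ssm(Y(v))|_w}{\ssm(Y(w))|_w} \;=\; \prod_{\beta\in S(w/v)}\Bigl(1+\frac{1}{\beta}\Bigr).
\]

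Next, I would invoke \Cref{prop:eqmult}, which identifies this ratio with the equivariant multiplicity $e_{w,G/P}(\csm(R_w^v))$ of the CSM class of the Richardson variety. At this point, the desired equivalence is precisely the content of the equivalence $(a)\Leftrightarrow(e)$ in \Cref{thm:smoothP}. This completes the proof in a few lines.

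For the reader who prefers a direct verification of the forward direction, I would alternatively use the equivalence $(a)\Leftrightarrow(d)$ in \Cref{thm:smoothP}: when $Y(v)$ is smooth at $wP$,
\[
\ssm(Y(v))|_w \;=\; \prod_{\beta>0,\, v\nleq s_\beta w}\frac{\beta}{1+\beta},
\]
while \Cref{cor:issm} applied to the smooth point $wP\in Y(w)$ yields $\ssm(Y(w))|_w = \prod_{\beta\in S(w)}\beta/(1+\beta)$. The ratio then collapses to $\prod_{\beta\in S(w/v)}(1+1/\beta)$ after checking the small set-theoretic identity $S(w)\setminus\{\beta>0:v\nleq s_\beta w\}=S(w/v)$; this holds because for any positive root $\beta$, the condition $v\nleq s_\beta w$ forces $s_\beta w<w$ (since $v\le w\le s_\beta w$ would otherwise give $v\le s_\beta w$), so the set in question is a subset of $S(w)$, and removing it from $S(w)$ leaves exactly those $\beta\in S(w)$ with $v\le s_\beta w$, which is $S(w/v)$ by definition.

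There is no real obstacle here: the conceptual work has been done in \Cref{thm:smoothP}, \Cref{prop:eqmult}, and \Cref{thm:LocCSM2}. The only thing to be careful about is that the equality of rational functions in the theorem is an \emph{if and only if}, so one needs both directions of \Cref{thm:smoothP}; in particular, the nontrivial reverse direction ``equivariant multiplicity has product form $\Rightarrow$ smoothness'' rests on \Cref{thm:csmsmooth} (Brion's criterion extended to CSM classes), which has already been proved earlier in the paper.
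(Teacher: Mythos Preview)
Your proof is correct and follows essentially the same approach as the paper: combine \Cref{thm:LocCSM2}, \Cref{prop:eqmult}, and the equivalence $(a)\Leftrightarrow(e)$ of \Cref{thm:smoothP}, with only a trivial reordering of the steps. The additional verification via $(a)\Leftrightarrow(d)$ is unnecessary but harmless.
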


\begin{proof} We proved in \Cref{prop:eqmult} that 
\[ \frac{\ssm(Y(v))|_w}{\ssm(Y(w))|_w}=e_{w,G/P} (\csm(R_w^{v})) \/. \]
Then by 
\Cref{thm:smoothP}, $Y(v)\subset G/P$ is smooth at $wP\in G/P$ if and only if 
\[ \frac{\ssm(Y(v))|_w}{\ssm(Y(w))|_w} = \prod_{\beta\in S(w/v)}(1+\frac{1}{\beta}) \/. \]
Now observe that by \Cref{thm:LocCSM2} the left hand side of this expression is the sum in the statement.\end{proof}
An important particular case of \Cref{thm:genNak} is to consider a constant admissible function.
For example, let $\pi\in X^*(T)$ be any dominant integral weight, and as usual define 
$P$ by $\Stab_W(\pi)=W_P$. Set
$\Lambda(x) \equiv \pi$ for $x \in [v,w]^P$, and recall that $\calW_\pi$ denotes the associated
weight function. Let $m_i = \langle \pi, \gamma_i^\vee \rangle$ be the multiplicity of the 
edge $x_i \to x_{i-1}$ and let $\beta_i$ be the positive root
$\beta_i$ such that $x_{i-1}W_P = s_{\beta_i}x_i W_P$.
With this notation, and by \Cref{thm:genNak}, we deduce the following.
\begin{corol}\label{col:NakP}
Under the above assumptions,
we have the following equivalence:
\begin{center}
$Y(v)\subset G/P$ is smooth at $wP\in G/P$  if and only if
\[
\sum \frac{m_r}{m_1\beta_1+m_2\beta_2+\cdots +m_r\beta_r} \cdot \ldots \cdot 
\frac{m_1}{m_1\beta_1}
=\prod_{\beta \in S(w/v)}\left(1+\frac{1}{\beta}\right),
\]
\end{center}
where the sum is over all integers $r \ge 0$, and over all directed paths 
$v\leq x_r \overset{\beta_r}{\to} x_{r-1} \overset{\beta_{r-1}}{\to} \ldots \overset{\beta_1}{\to} x_0 = w$ in $\Gamma= (v,w,\pi)$. 
\end{corol}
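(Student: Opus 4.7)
The plan is to specialize \Cref{thm:genNak} to the constant admissible function $\Lambda \equiv \pi$ and then evaluate the weight function $\mathcal{W}_\pi$ explicitly along a path, using the telescoping identity from \Cref{maxP}.

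First I would verify that the constant function $\Lambda(x) \equiv \pi$ on $[v,w]^P$ is indeed admissible. Since $W_P = \Stab_W(\pi)$ by hypothesis, the equality $x(\pi) = w(\pi)$ is equivalent to $w^{-1}x \in W_P$, i.e., to $xW_P = wW_P$. For $x \in [v,w)^P$ we have $x \neq w$ as cosets in $W^P$, so $x(\pi) \neq w(\pi)$. Thus $\Lambda \equiv \pi$ satisfies the admissibility condition, and \Cref{thm:genNak} applies to the associated $\Lambda$-Bruhat graph $\Gamma = (v,w,\pi)$.

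Next I would unwind the two decorations. By construction, the Chevalley multiplicity of the edge $x_i \overset{\beta_i}{\to} x_{i-1}$ with $x_{i-1}W_P = x_i s_{\gamma_i} W_P$ is $m_\pi(x_i, x_{i-1}) = \langle \pi, \gamma_i^\vee \rangle = m_i$. For the weights, I would apply \Cref{maxP}(b) to each edge of an arbitrary path $v \le x_r \overset{\beta_r}{\to} x_{r-1} \to \cdots \overset{\beta_1}{\to} x_0 = w$, obtaining
\begin{equation*}
\mathcal{W}_\pi(x_i) \;=\; \mathcal{W}_\pi(x_{i-1}) + m_i \beta_i \qquad (1 \le i \le r).
\end{equation*}
Since $\mathcal{W}_\pi(x_0) = \mathcal{W}_\pi(w) = 0$, telescoping yields
\begin{equation*}
\mathcal{W}_\pi(x_i) \;=\; m_1 \beta_1 + m_2 \beta_2 + \cdots + m_i \beta_i
\end{equation*}
for every $1 \le i \le r$. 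In particular each $\mathcal{W}_\pi(x_i)$ is nonzero along the path, so each term of the sum in \Cref{thm:genNak} is well-defined.

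Substituting these expressions into the general formula of \Cref{thm:genNak} transforms its left-hand side into exactly
\begin{equation*}
\sum \frac{m_r}{m_1\beta_1 + \cdots + m_r\beta_r} \cdot \frac{m_{r-1}}{m_1\beta_1 + \cdots + m_{r-1}\beta_{r-1}} \cdot \ldots \cdot \frac{m_1}{m_1 \beta_1},
\end{equation*}
and the equivalence with the product $\prod_{\beta \in S(w/v)}(1 + 1/\beta)$ being equal to the smoothness of $Y(v)$ at $wP$ is exactly the content of \Cref{thm:genNak}. There is no real obstacle beyond the telescoping step; the only thing to be careful about is the admissibility check and the direction of telescoping from $x_0 = w$ back along the path, both of which are immediate.
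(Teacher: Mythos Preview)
Your proposal is correct and follows essentially the same approach as the paper: specialize \Cref{thm:genNak} to $\Lambda\equiv\pi$ and use \Cref{maxP}(b) (i.e.\ \Cref{E:Wla}) to telescope $\mathcal{W}_\pi(x_i)=\sum_{j=1}^i m_j\beta_j$. The paper's proof is a one-liner citing exactly these two ingredients; your explicit admissibility check is a welcome bit of extra care but not a departure in method.
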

\begin{proof} Immediate from \Cref{thm:genNak}, since $\mathcal{W}_\pi(x_k)=\sum_{i=1}^k m_i \beta_i$,
by \Cref{E:Wla}. 
\end{proof}

\begin{corol}\label{col:Nak} 
Let $v \le w \in W$ be two $\pi$-minuscule elements for a dominant integral weight $\pi$,
and $P\subset G$ be the parabolic subgroup satisfying 
$\Stab_W(\pi)=W_P$. Then: 
\begin{center}
$Y(v)\subset G/P$ is smooth at $wP\in G/P$  if and only if
\begin{equation}\label{E:minusculeNak}
\sum \frac{1}{\beta_1+\beta_2+\cdots +\beta_r} \cdot \ldots \cdot 
\frac{1}{\beta_1}
=\prod_{\beta \in S(w/v)}\left(1+\frac{1}{\beta}\right),
\end{equation}
\end{center}
where the sum is as in \Cref{col:NakP}.
\end{corol}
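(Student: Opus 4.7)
The plan is to derive this corollary as an immediate specialization of \Cref{col:NakP}, by feeding in the constant admissible function $\Lambda \equiv \pi$ and showing that under the $\pi$-minuscule hypothesis every Chevalley multiplicity on the $\Lambda$-Bruhat graph $(v,w,\pi)$ equals $1$.

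First I would verify that $\Lambda \equiv \pi$ is indeed admissible on $[v,w]^P$. Since $\Stab_W(\pi)=W_P$, the map $W^P \to W(\pi)$, $x \mapsto x(\pi)$ is injective, so $x(\pi) \neq w(\pi)$ for every $x \in [v,w)^P$; this is exactly the admissibility condition. In particular the constant function is a legitimate input to \Cref{col:NakP}, yielding
\[
\sum \frac{m_r}{m_1\beta_1+\cdots+m_r\beta_r}\cdot\ldots\cdot \frac{m_1}{m_1\beta_1}
=\prod_{\beta\in S(w/v)}\Bigl(1+\frac{1}{\beta}\Bigr)
\]
as the criterion for smoothness of $Y(v)$ at $wP$, where $m_i$ is the Chevalley multiplicity of the edge $x_i \to x_{i-1}$ and $\beta_i$ is the associated positive root. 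Here the identification $\mathcal{W}_\pi(x_k)=\sum_{i=1}^k m_i \beta_i$ used in \Cref{col:NakP} comes directly from \Cref{maxP}(b).

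Next I would apply \Cref{lem:wminuscule}(b) iteratively along any edge $x \to y$ in the $\Lambda$-Bruhat graph of $[v,w]^P$: since $w$ is $\pi$-minuscule, every element of $[v,w]^P$ reached by a chain of covers from $w$ is also $\pi$-minuscule. Having established that every vertex of the graph is $\pi$-minuscule, \Cref{cor:multweight}(1) gives $m_i = \langle \pi, \gamma_i^\vee\rangle = 1$ for every edge $x_i \to x_{i-1}$. Substituting $m_i=1$ into the identity from \Cref{col:NakP} produces exactly \eqref{E:minusculeNak}, proving the corollary.

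The argument is essentially mechanical once the preparatory results are in place, so I do not anticipate a substantive obstacle; the only conceptual point is the propagation of the $\pi$-minuscule property down the interval $[v,w]^P$, which is already handled by \Cref{lem:wminuscule}(b) and used implicitly in the proof of \Cref{cor:multweight}. Thus the entire proof reduces to invoking admissibility of $\Lambda \equiv \pi$, then \Cref{cor:multweight}(1) to normalize all multiplicities to $1$, and finally reading off the resulting specialization of \Cref{col:NakP}.
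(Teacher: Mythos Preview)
Your proposal is correct and follows essentially the same route as the paper: invoke \Cref{col:NakP} with the constant function $\Lambda\equiv\pi$, use \Cref{lem:wminuscule}(b) to propagate the $\pi$-minuscule property to every vertex of the interval, and then apply \Cref{cor:multweight}(1) to conclude that all Chevalley multiplicities equal $1$. Your explicit verification of admissibility is a harmless addition; the paper already records this fact in the discussion preceding \Cref{col:NakP}.
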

\begin{proof} First observe that by \Cref{lem:wminuscule}(2), 
since $w$ is $\pi$-minuscule, each representative $x \in W^P$ in a chain to $w$ 
is also $\pi$-minuscule. Then from \Cref{col:NakP}, we only need to show that 
any edge $x \to y$ has multiplicity $m_\pi(x,y)=1$. This follows from \Cref{cor:multweight}.
\end{proof}
If $v=id$, \Cref{col:Nak} recovers Nakada's colored hook formula stated in \Cref{thm:anotherform}. 

\Cref{col:Nak} implies 
a skew version of the classical Peterson formula \cite{carrell:vector,proctor:minuscule}. 
Recall that $\Red(w)$ 
denotes the set of all reduced expressions of $w$ and $\h(\beta)$ is the height of the positive root $\beta$.
\begin{corol}[A skew Peterson formula]\label{cor:skewPP} Let $\pi$ be a 
dominant integral weight, and let $P$ be the parabolic subgroup defined by
$\Stab_W(\pi)=W_P$. Let $v \le w\in W$ be $\pi$-minuscule elements 
such that
$Y(vW_P)$ is smooth at $wP$. Then
\begin{equation}\label{equ:petersonproctor}
\#\Red(wv^{-1})=\frac{(\ell(w)-\ell(v))!}{\prod_{\beta\in S(w/v)}\h(\beta)} ~\/.
\end{equation}
\end{corol}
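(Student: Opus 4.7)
The plan is to extract the lowest-degree parts of the identity \eqref{E:minusculeNak} from \Cref{col:Nak}, then use a classical specialization of simple roots to a single variable.

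First, I would invoke \Cref{col:Nak}: because $v \le w$ are $\pi$-minuscule and $Y(vW_P)$ is smooth at $wP$, the identity
\[ \sum \frac{1}{\beta_1+\beta_2+\cdots+\beta_r}\cdot\ldots\cdot\frac{1}{\beta_1}=\prod_{\beta\in S(w/v)}\left(1+\frac{1}{\beta}\right) \]
holds as rational functions on $\fh^*$. Viewing both sides as sums of pieces homogeneous in the roots, the RHS has lowest-degree term $\prod_{\beta\in S(w/v)} 1/\beta$ of degree $-|S(w/v)|$, while a summand on the LHS coming from a path of length $r$ has degree $-r$. Maximal paths in $[v,w]^P$ have length at most $d:=\ell(w)-\ell(v)$, achieved exactly when the starting vertex is $v$ and every edge is a covering relation; thus the lowest-degree contribution on the LHS has degree $-d$. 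Matching degrees in the identity forces $|S(w/v)|=d$.

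Second, I would invoke \Cref{cor:skewred} (which in turn relies on \Cref{prop:weakstrong}): the maximal paths are in bijection with $\Red(wv^{-1})$, and the edge labels $\beta_1,\beta_2,\ldots,\beta_d$ of any such path are simple roots. This is the crucial property that makes the ensuing specialization trick work. Taking the lowest-degree part of \eqref{E:minusculeNak} yields
\[ \sum_{\text{max paths}}\prod_{k=1}^d \frac{1}{\beta_1+\beta_2+\cdots+\beta_k}=\prod_{\beta\in S(w/v)}\frac{1}{\beta}. \]

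Finally, I would apply the specialization sending each simple root $\alpha_i$ to a common new variable $t$. A positive root $\beta$ becomes $\h(\beta)\,t$, so the right side collapses to $1/\bigl(t^{d}\prod_{\beta\in S(w/v)}\h(\beta)\bigr)$. Any partial sum of simple roots $\beta_1+\cdots+\beta_k$ becomes $k\,t$, so each summand on the left becomes $1/(d!\,t^d)$. Summing over the $\#\Red(wv^{-1})$ maximal paths yields $\#\Red(wv^{-1})/(d!\,t^d)$ on the left. Equating the two sides gives the stated formula.

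The main point requiring care is to ensure that no cancellation destroys the lowest-degree term of the LHS sum; this is automatic because every summand contributes the same strictly positive quantity $1/(d!\,t^d)$ after specialization, so the count $\#\Red(wv^{-1})$ can be read off directly. Beyond this, the proof is essentially a combination of \Cref{col:Nak}, \Cref{cor:skewred}, and the classical specialization trick that turns a root-valued colored identity into an integer identity.
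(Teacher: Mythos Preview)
Your proposal is correct and follows essentially the same route as the paper's own proof: extract the lowest-degree part of the identity in \Cref{col:Nak}, identify the contributing paths with reduced words of $wv^{-1}$ via \Cref{cor:skewred}, and then specialize each simple root to a constant (the paper sends $\alpha_i\mapsto 1$, you send $\alpha_i\mapsto t$, which is equivalent). Your version is slightly more explicit in arguing why the degrees match and why the lowest-degree sum cannot vanish, but the structure and the ingredients used are the same.
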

\begin{proof} Consider the term of lowest degree $-\# S(w/v)$ in the expression in \Cref{col:Nak}. This corresponds to taking the summation over {\em maximal paths} in \Cref{col:Nak}, and yields
\begin{equation}\label{eq:max_path}
\sum \frac{1}{\beta_1+\beta_2+\cdots +\beta_r} \cdot \ldots \cdot 
\frac{1}{\beta_1}
=\prod_{\beta \in S(w/v)} \frac{1}{\beta} \/.
\end{equation}
The paths considered are the same as the paths from \Cref{cor:skewred}, in particular each $\beta_i$ 
is a simple root. Now specialize each simple root $\alpha_i \mapsto 1$. The right hand side gives $\frac{1}{\prod_{\beta\in S(w/v)}\h(\beta)}$. On the left hand side, each summand specializes to $\frac{1}{ (\ell(w)-\ell(v)) !}$, and the number of summands is equal to the number of reduced expressions of $wv^{-1}$, again by 
\Cref{cor:skewred}.\end{proof}

\begin{remark}\label{rem:reducetoid} Assume the hypotheses from \Cref{col:Nak}. 
It follows from \Cref{smooth:equiv_app} below 
that if $W$ is a simply laced Weyl group, then 
then $Y(v)$ is smooth at $w$ if and only if $wv^{-1}$ is $\pi'$-minuscule, where $\pi'$ is dominant integral.
Furthermore, in this case the identities \eqref{E:minusculeNak} and \eqref{equ:petersonproctor}
for $[v,w]^\pi$ coincide with those corresponding to the interval $[id,wv^{-1}]^{\pi'}$. 
If $W$ is not simply laced then it is possible that $Y(v)$ is smooth at $w$, but $wv^{-1}$ is not dominant minuscule. An 
example is in type $F_4$ for $v=s_1$, $w=s_1 s_3 s_2s_4s_3s_2s_1$, with $\alpha_1, \alpha_2$ short roots. However, in this case one can 
show that if $Y(v)$ is smooth at $w$,
the `skew' Nakada's identity is obtained from the `straight' formula applied to suitable elements 
in type $E_6$. The transformation between the two cases is given by the folding of the root system
$E_6$ into $F_4$. This suggests that folding may lead to a more general statement relating skew and straight Nakada formulae. 
\end{remark}

\begin{remark}\label{rem:general_miuscule}
If we remove the condition that $Y(vW_P)$ is smooth at $wP$, by \Cref{hook-formula}, \Cref{equ:explicit}, and the proof of \Cref{cor:skewPP}, we obtain
\begin{equation}\label{skew:cohomology}
\# \Red(wv^{-1})=(\ell(w)-\ell(v))!\frac{\sum \h(\beta_{j_1})\h(\beta_{j_2})\cdots \h(\beta_{j_{\ell(v)}})}{\h(\beta_1)\cdots \h(\beta_\ell)}
\end{equation}
where the summation is over $ 1\leq j_1<j_2<\cdots <\ell(v)\leq \ell$ such that 
$v= s_{i_{j_1}}s_{i_{j_2}}\cdots s_{i_{j_{\ell(v)}}}$ is a reduced decomposition. 
Observe also that the right hand side is the specialization of
$(\ell(w)-\ell(v))!\frac{[Y(v)]|_w}{[Y(w)]|_w}$ 
by taking a root to its height.
We will reformulate this using excited diagrams associated to a heap; see \Cref{remark:Billey_formula}.
\end{remark}

\section{Heaps, excited diagrams, and smoothness}\label{sec:heaps}

In this section, we utilize excited diagrams to give a smoothness criterion for the Schubert variety at a torus fixed point $wP$, where $w$ is a dominant minuscule element in a simply-laced Weyl group. As a consequence, we show that our skew version of the Nakada's hook formula \Cref{col:Nak} is equivalent to the original formula \Cref{thm:anotherform}, see \Cref{rem:reducetoid}.

We need to introduce the heap of a Weyl group element and excited diagrams first.
We consider a general finite Weyl group, although the statements also hold for a Kac--Moody Weyl group.
For example, the (dominant) minuscule elements (\Cref{def:minuscule})
are defined and studied in this generality
(\cite{proctor:minuscule}, \cite{stembridge2001minuscule}). The concept of $d$-complete posets
utilized by Proctor can also be included within the heap framework.

\subsection{Heap}
From now on, we fix ${\bf D}=(I,E)$ to be an unoriented Dynkin diagram, where $I$ is the node set and $E$ is the edge set of $\bf D$.
Let $W$ be the associated Weyl group of ${\bf D}$ with Cartan matrix $A=(a_{i,j})$, 
$a_{i,j}=\langle \alpha_j, \alpha_i^\vee\rangle$.
\begin{defin}(Heap $H(w)$ of a reduced expression)\label{def:heap}
Given a reduced expression 
$w=s_{i_1}s_{i_{2}}\cdots s_{i_\ell}$ of an element $w\in W$, 
define a poset $H(w)$ as follows.

As a set $H(w):=\{p_1,p_2,\ldots ,p_\ell\}$, where $\ell = \ell(w)$.
The partial order on $H(w)$ is the minimal partial order with  the property that
if  $1\leq j<k \leq \ell$ and  $s_{i_j} s_{i_k}\neq s_{i_k} s_{i_j}$ then $p_j < p_k$.
There is a (canonical) {\bf coloring} map 
\[ c_w:H(w)\to I \/; \quad c_w(p_{j})={i_j} \/; \quad  (1\leq j\leq \ell) \/.\]
The {\bf support} of $H(w)$ is the image $c_w(H(w))$. 
\end{defin}
One may think of $H(w)$ as the poset of initial subexpressions of $s_{i_1}s_{i_{2}}\cdots s_{i_\ell}$.
If $s_i = s_{i_k}$ and the index $i_k$ is clear from the context, we will say $s_i \in H(w)$ to mean that 
$c_w(p_{i_k}) = i$. 
\begin{example}\label{ex:Heap1} In the figure below,
we consider the heap $H(w)$ for a
dominant minuscule element $w\in S_{10}$
(Lie type $A_9$). The element $w$ is $\pi=\varpi_2+\varpi_7$-minuscule, and
its support has components $I_1=\{1,2,3\}, 
I_2=\{5,6,7,8,9 \}$. While $w$ is not a Grassmannian element, $w=w_1 \cdot w_2$ with
$w_1,w_2$ both Grassmannian. Here,
$w_1=s_2 s_1 s_3 s_2$ is $\varpi_2$-minuscule, and 
$w_2=s_5 s_8 s_7 s_6 s_9 s_8 s_7$ is $\varpi_7$-minuscule
(the reading order is specified by the elements $p_i$; in this case this is the reading of the colors of vertices on diagonal edges, in SE-NW direction). 
\begin{figure}[h]
\begin{tikzpicture}[x=1.2em,y=1.2em]

\foreach \x in {1,...,9}{
\node at (\x,0) {\tiny $\bullet$};
\node at (\x,-0.4) {\tiny $\x$};
\draw [dashed] (\x,0)--(\x,5);
};

\foreach \x/\y/\z in {2/1/1,1/2/2,3/2/3,2/3/4}{
\node at (\x,\y) {\tiny$\bullet$};
\node at (\x+0.1,\y+0.5) {\tiny$p_{\z}$};};
\draw [thick](1,2)--(2,3)--(3,2)--(2,1)--cycle;

\foreach \x/\y/\z in {5/2/5,8/1/6,7/2/7,6/3/8,7/4/11,8/3/10,9/2/9
}{
\node at (\x,\y) {\tiny $\bullet$};
\node at (\x+0.1,\y+0.5) {\tiny$p_{\z}$};
};

\draw [thick] (5,2)--(7,4);
\draw [thick] (6,3)--(8,1);
\draw [thick] (7,4)--(9,2);
\draw [thick] (7,2)--(8,3);
\draw [thick] (8,1)--(9,2);

\draw (1,0)--(9,0);
\end{tikzpicture}
\end{figure}
This heap may also be seen as a difference of two heaps, and it is associated to the 
skew diagram $(5,5,3,2,2)/(2,2,2)$ in $\Gr(5,10)$; see \Cref{Gr510}.
\end{example}
\begin{remark} 
Let $w$ be a $\pi$-minuscule element for dominant integral $\pi$.
It is known that each connected component (as a Hasse diagram) of $H(w)$ has a 
unique maximal element (cf. \cite[\S4]{stembridge2001minuscule}). Then
$\pi$ is the sum of fundamental weights corresponding to the maximal elements 
(see e.g. \Cref{ex:Heap1}).
\end{remark}

\begin{remark}
Originally, heaps were defined by Viennot for words of monoids and called ``Heap of pieces" \cite{viennot:heap}. Stembridge \cite{stembridge2001minuscule} considered heaps for 
non-reduced expressions of Coxeter elements. We will not need this generality.\end{remark}
From now on fix a $\pi$-minuscule element $w$ with $\pi$ integral dominant weight,
and a reduced decomposition
$w=s_{i_1} s_{i_2}\ldots s_{i_\ell}$. 
We recall next some fundamental properties of heaps $H(w)$.
For a poset $\mathcal P$, a subset $F\subset \mathcal P$ is called
an order filter if $x\in F$ and $y\in \mathcal P$ satisfying $x<y$,
then $y\in  F$. Let $\mathcal F(\mathcal P)$ be the set of order filters
of $\mathcal P$. Then $\mathcal F(\mathcal P)$ 
has a natural partial order structure by inclusion of sets.

It was proved by Stembridge \cite{stembridge2001minuscule,stembridge:fullycomm}
that the heap $H(w)$ is independent of choices of reduced expressions, 
in the following sense. Take another reduced expression $w=s_{j_1} s_{j_2}\ldots s_{j_\ell}$
with associated heap $H'(w)=\{q_1,q_2,\ldots, q_\ell\}$ and
labeling $c'_w:H'(w)\to I$ as in \Cref{def:heap}.
Then there exists a permutation $\sigma$ of $\{1,2,\ldots, \ell\}$ such that
the induced bijective map ${\sigma}:H(w)\to H'(w), p_i \mapsto q_{\sigma(i)}$
gives an isomorphism of posets with compatible coloring maps, 
i.e. $c'_w\circ {\sigma}=c_w$. 

Consider $v\in [id, w]^\pi$. Since by \Cref{prop:weakstrong} 
intervals in weak and strong order coincide, $v$ has a canonical 
reduced expression obtained from a final subexpression of 
$s_{i_1} s_{i_2}\ldots s_{i_\ell}$.
Then $H(v)$ is canonically identified
with an order filter of $H(w)$, and 
$c_v$ is the restriction of $c_w$. Further, $H(wv^{-1}) = H(w) \setminus H(v)$ 
and we have the following poset isomorphism:
\begin{equation}\label{E:FHiso}
[v, w]^\pi \simeq \mathcal F(H(w)\setminus H(v))\/, \quad z\mapsto H(z)\setminus H(v) \/.
\end{equation}
\subsection{Excited diagrams of heaps} Excited (Young) diagrams were defined in \cite{ikeda2009excited} as
a tool to calculate equivariant localization of Schubert classes in the maximal orthogonal Grassmannians 
of classical types. Recently, their definition was extended in \cite[Def.3.1]{naruse2019skew} to include 
excited diagrams of any colored poset. We recall this definition next, and prove 
some properties relevant to this paper.

\begin{defin}\label{def:excite_dg}
Let $(\mathcal P,c)$ be a poset equipped with a coloring map $c:\mathcal{P} \to I$
with values in the vertices of the Dynkin diagram ${\bf D}=(I,E)$. Define the partial order 
$\unrhd$
on the subsets of $\mathcal{P}$ generated by the following covering relation denoted by 
$\vartriangleright$
(and called {\bf elementary excitation}). 

If $D, D'\subset \mathcal P$ then 
$D \vartriangleright D'$ (and we say $D$ is excited to $D'$) if the following hold: 
\begin{enumerate}
\item There exist $p<q$ with $p \in D'$ and $q\in D$ such that $D\setminus \{q\}=D'\setminus \{p\}$, 
the colors of $p$ and $q$ are the same, and furthermore, there is no other element  
$p<z<q$ in $\mathcal{P}$ such that $c(z) = c(p)$.
 \item For any $u \in D \cap [p,q]$, the pair $(c(p), c(u))$ is not an edge in the Dynkin diagram ${\bf D}$. 
\end{enumerate} 
For $F\subset \mathcal P$, the set $\mathcal E_{\mathcal P} (F)$ 
of excited diagrams of $F$ in $\mathcal P$ is the principal order ideal generated by $F$:
$\mathcal E_{\mathcal P} (F) = \{ D: F \unrhd D\}$.
\end{defin}

An element $D$ in $\mathcal E_{\mathcal P} (F)$ is called an {\bf excited diagram} of $F$ in $\mathcal P$.
Our main examples of excited diagrams come from heaps of minuscule elements $(H(w), c_w)$. 

\begin{example}\label{ex:typeA_d-comp} Consider 
$w=s_2 s_1 s_3 s_2s_5 s_8 s_7 s_6 s_9 s_8 s_7$ and 
$H(w)$ from \Cref{ex:Heap1}. Take $v=s_2 s_8s_7 \le w$. 
The heap $H(v)$ is represented by the red dots 
of $H(w)$ in the first diagram below. The collection of all diagrams forms the set
$\mathcal E_{H(w)} (H(v))$ of excited diagrams of $H(v)$. 
\begin{figure}[h]
\begin{tikzpicture}[x=1.0em,y=1.0em]
\foreach \x in {1,...,9}{
\node at (\x,0) {\tiny $\bullet$};
\node at (\x,-0.4) {\tiny $\x$};
\draw [dashed] (\x,0)--(\x,5);
};
\foreach \x/\y/\z in {2/1/1,1/2/2,3/2/3,2/3/4}{
\node at (\x,\y) {\tiny$\bullet$};
\node at (\x+0.1,\y+0.5) {\tiny$p_{\z}$};
};
\draw [thick](1,2)--(2,3)--(3,2)--(2,1)--cycle;

\foreach \x/\y/\z in {5/2/5,8/1/6,7/2/7,6/3/8,7/4/11,8/3/10,9/2/9
}{
\node at (\x,\y) {\tiny $\bullet$};
\node at (\x+0.1,\y+0.5) {\tiny$p_{\z}$};
};
\draw [thick] (5,2)--(7,4);
\draw [thick] (6,3)--(8,1);
\draw [thick] (7,4)--(9,2);
\draw [thick] (7,2)--(8,3);
\draw [thick] (8,1)--(9,2);
\draw (1,0)--(9,0);

\foreach \x/\y in {2/3,7/4,8/3}{
\node[red] at (\x,\y) {\Large$\bullet$};};

\node[red] at (4,4.2) {\Small $D_1=H(v)$};

\end{tikzpicture}
\hspace{1cm}
\begin{tikzpicture}[x=1.0em,y=1.0em]
\foreach \x in {1,...,9}{
\node at (\x,0) {\tiny $\bullet$};
\node at (\x,-0.4) {\tiny $\x$};
\draw [dashed] (\x,0)--(\x,5);
};
\foreach \x/\y/\z in {2/1/1,1/2/2,3/2/3,2/3/4}{
\node at (\x,\y) {\tiny$\bullet$};
\node at (\x+0.1,\y+0.5) {\tiny$p_{\z}$};
};
\draw [thick](1,2)--(2,3)--(3,2)--(2,1)--cycle;

\foreach \x/\y/\z in {5/2/5,8/1/6,7/2/7,6/3/8,7/4/11,8/3/10,9/2/9
}{
\node at (\x,\y) {\tiny $\bullet$};
\node at (\x+0.1,\y+0.5) {\tiny$p_{\z}$};
};
\draw [thick] (5,2)--(7,4);
\draw [thick] (6,3)--(8,1);
\draw [thick] (7,4)--(9,2);
\draw [thick] (7,2)--(8,3);
\draw [thick] (8,1)--(9,2);
\draw (1,0)--(9,0);

\foreach \x/\y in {2/3,7/4,8/1}{
\node[red] at (\x,\y) {\Large$\bullet$};};

\node[red] at (4,4.2) {\Small$D_2$};

\end{tikzpicture}
\hspace{1cm}%
\begin{tikzpicture}[x=1.0em,y=1.0em]
\foreach \x in {1,...,9}{
\node at (\x,0) {\tiny $\bullet$};
\node at (\x,-0.4) {\tiny $\x$};
\draw [dashed] (\x,0)--(\x,5);
};
\foreach \x/\y/\z in {2/1/1,1/2/2,3/2/3,2/3/4}{
\node at (\x,\y) {\tiny$\bullet$};
\node at (\x+0.1,\y+0.5) {\tiny$p_{\z}$};
};
\draw [thick](1,2)--(2,3)--(3,2)--(2,1)--cycle;

\foreach \x/\y/\z in {5/2/5,8/1/6,7/2/7,6/3/8,7/4/11,8/3/10,9/2/9
}{
\node at (\x,\y) {\tiny $\bullet$};
\node at (\x+0.1,\y+0.5) {\tiny$p_{\z}$};
};
\draw [thick] (5,2)--(7,4);
\draw [thick] (6,3)--(8,1);
\draw [thick] (7,4)--(9,2);
\draw [thick] (7,2)--(8,3);
\draw [thick] (8,1)--(9,2);
\draw (1,0)--(9,0);

\foreach \x/\y in {2/3,7/2,8/1}{
\node[red] at (\x,\y) {\Large$\bullet$};};
\node[red] at (4,4.2) {\Small$D_3$};

\end{tikzpicture}
\vspace{0.3cm}

\begin{tikzpicture}[x=1.0em,y=1.0em]
\foreach \x in {1,...,9}{
\node at (\x,0) {\tiny $\bullet$};
\node at (\x,-0.4) {\tiny $\x$};
\draw [dashed] (\x,0)--(\x,5);
};
\foreach \x/\y/\z in {2/1/1,1/2/2,3/2/3,2/3/4}{
\node at (\x,\y) {\tiny$\bullet$};
\node at (\x+0.1,\y+0.5) {\tiny$p_{\z}$};
};
\draw [thick](1,2)--(2,3)--(3,2)--(2,1)--cycle;

\foreach \x/\y/\z in {5/2/5,8/1/6,7/2/7,6/3/8,7/4/11,8/3/10,9/2/9
}{
\node at (\x,\y) {\tiny $\bullet$};
\node at (\x+0.1,\y+0.5) {\tiny$p_{\z}$};
};
\draw [thick] (5,2)--(7,4);
\draw [thick] (6,3)--(8,1);
\draw [thick] (7,4)--(9,2);
\draw [thick] (7,2)--(8,3);
\draw [thick] (8,1)--(9,2);
\draw (1,0)--(9,0);

\foreach \x/\y in {2/1,7/4,8/3}{
\node[red] at (\x,\y) {\Large$\bullet$};};

\node[red] at (4,4.2) {\Small$D_4$};

\end{tikzpicture}
\hspace{1cm}
\begin{tikzpicture}[x=1.0em,y=1.0em]
\foreach \x in {1,...,9}{
\node at (\x,0) {\tiny $\bullet$};
\node at (\x,-0.4) {\tiny $\x$};
\draw [dashed] (\x,0)--(\x,5);
};
\foreach \x/\y/\z in {2/1/1,1/2/2,3/2/3,2/3/4}{
\node at (\x,\y) {\tiny$\bullet$};
\node at (\x+0.1,\y+0.5) {\tiny$p_{\z}$};
};
\draw [thick](1,2)--(2,3)--(3,2)--(2,1)--cycle;

\foreach \x/\y/\z in {5/2/5,8/1/6,7/2/7,6/3/8,7/4/11,8/3/10,9/2/9
}{
\node at (\x,\y) {\tiny $\bullet$};
\node at (\x+0.1,\y+0.5) {\tiny$p_{\z}$};
};
\draw [thick] (5,2)--(7,4);
\draw [thick] (6,3)--(8,1);
\draw [thick] (7,4)--(9,2);
\draw [thick] (7,2)--(8,3);
\draw [thick] (8,1)--(9,2);
\draw (1,0)--(9,0);

\foreach \x/\y in {2/1,7/4,8/1}{
\node[red] at (\x,\y) {\Large$\bullet$};};

\node[red] at (4,4.2) {\Small$D_5$};

\end{tikzpicture}
\hspace{1cm}%
\begin{tikzpicture}[x=1.0em,y=1.0em]
\foreach \x in {1,...,9}{
\node at (\x,0) {\tiny $\bullet$};
\node at (\x,-0.4) {\tiny $\x$};
\draw [dashed] (\x,0)--(\x,5);
};
\foreach \x/\y/\z in {2/1/1,1/2/2,3/2/3,2/3/4}{
\node at (\x,\y) {\tiny$\bullet$};
\node at (\x+0.1,\y+0.5) {\tiny$p_{\z}$};
};
\draw [thick](1,2)--(2,3)--(3,2)--(2,1)--cycle;

\foreach \x/\y/\z in {5/2/5,8/1/6,7/2/7,6/3/8,7/4/11,8/3/10,9/2/9
}{
\node at (\x,\y) {\tiny $\bullet$};
\node at (\x+0.1,\y+0.5) {\tiny$p_{\z}$};
};
\draw [thick] (5,2)--(7,4);
\draw [thick] (6,3)--(8,1);
\draw [thick] (7,4)--(9,2);
\draw [thick] (7,2)--(8,3);
\draw [thick] (8,1)--(9,2);
\draw (1,0)--(9,0);

\foreach \x/\y in {2/1,7/2,8/1}{
\node[red] at (\x,\y) {\Large$\bullet$};};

\node[red] at (4,4.2) {\Small $D_6$};

\end{tikzpicture}

$\mathcal E_{H(w)}(H(v))=\left\{
\begin{minipage}{2.7cm}
\begin{tikzpicture}\small
\node[red] at (1,2) {$D_1\vartriangleright D_2 \vartriangleright D_3$};
\node[red] at (1,1) {$D_4\vartriangleright D_5 \vartriangleright D_6$};
\foreach \x/\y in {0/1.5,1/1.5,1.9/1.5}{
\node[red] at (\x,\y) {\tiny$\bigtriangledown$};};
\end{tikzpicture}
\end{minipage}
\right\}
$

\caption{
$\mathcal E_{H(w)}(H(v))$, 
$w=s_2 s_1 s_3 s_2 \cdot s_5 s_8 s_7 s_6 s_9 s_8 s_7$, {$v=s_2 \cdot s_8 s_7$}.
}\label{ex:a9excite}
\end{figure}
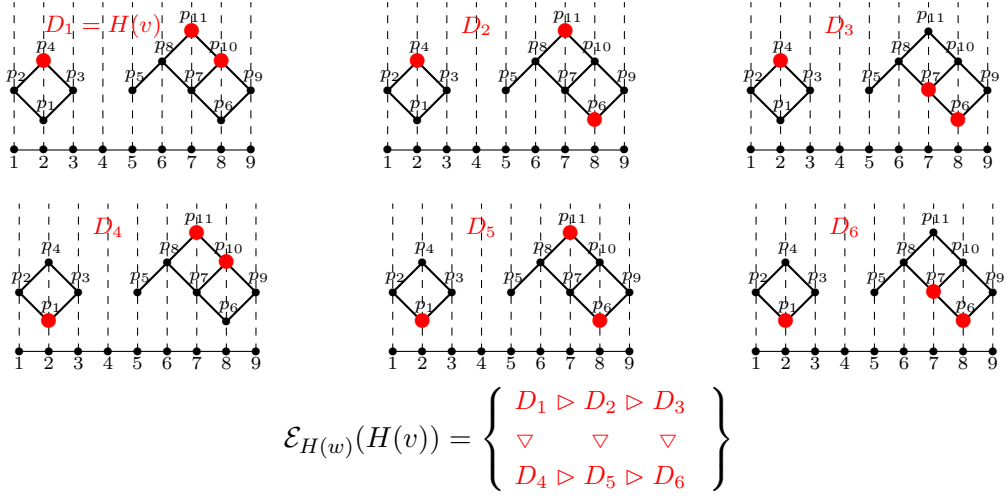 
\end{example}
We recall next a characterization of dominant minuscule elements due to Stembridge \cite{stembridge2001minuscule} (cf.
Prop.~2.3 and Prop.~2.5 in {\em loc.cit.}).

\begin{prop}\label{prop:stembridge-crit} Let $w = s_{i_1} \cdot \ldots \cdot s_{i_p}$ be
a reduced expression. Then the following hold:

(a) The element $w$ is minuscule if and only if between any pair of occurrences
of a generator $s_i$, with no other $s_i$ in between, there are exactly:
\begin{itemize} \item two terms $s_{j_1},s_{j_2}$ that do not commute with $s_i$, and the 
corresponding simple roots $\alpha_{j_1},\alpha_{j_2}$ have lengths at most that of $\alpha_i$, or,
\item one term that does not commute with $s_i$, say $s_j$, and $\langle \alpha_j, \alpha_i^\vee\rangle =-2$.
\end{itemize}

(b) The element $w$ is dominant minuscule if the conditions in (a) are satisfied, and, in addition,
the last occurrence of each generator $s_i$ is followed by at most one term $s_j$ 
that does not commute with $s_i$, and the corresponding root $\alpha_j$ has length at most that 
of $\alpha_i$. 
\end{prop}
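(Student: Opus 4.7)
The plan is to track, for each fixed $i \in I$, the integer pairing $c_k := \langle \mu_k, \alpha_i^\vee\rangle$ along the reduced word, where $\mu_k := s_{i_{k+1}}\cdots s_{i_\ell}(\pi)$. The defining minuscule condition $\langle \mu_k, \alpha_{i_k}^\vee\rangle = 1$ is equivalent to $\mu_{k-1} = \mu_k - \alpha_{i_k}$, giving the elementary recursion
\[ c_k - c_{k-1} = \langle \alpha_{i_k}, \alpha_i^\vee\rangle, \qquad c_\ell = \langle \pi, \alpha_i^\vee\rangle. \]
In finite type, the only possible values for $\langle \alpha_j, \alpha_i^\vee\rangle$ with $j \neq i$ are $0$ (commuting), $-1$ (non-commuting with $|\alpha_j| \le |\alpha_i|$), $-2$ (non-commuting long/short case in $B, C, F$), or $-3$ (reserved for $G_2$). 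The entire argument is extracted from this recursion together with this bounded case analysis.

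For the forward direction of (a), I would consider two consecutive occurrences of $s_i$ at positions $k_1 < k_2$ with no intervening $s_i$. The minuscule condition forces $c_{k_1} = c_{k_2} = 1$, whereas $\mu_{k_2-1} = \mu_{k_2} - \alpha_i$ gives $c_{k_2-1} = -1$. Summing the recursion yields
\[ \sum_{k_1 < k < k_2} \langle \alpha_{i_k}, \alpha_i^\vee\rangle \;=\; c_{k_2-1} - c_{k_1} \;=\; -2, \]
and the only combinations of the admissible values $\{0,-1,-2,-3\}$ summing to $-2$ are two terms equal to $-1$ or a single term equal to $-2$; these are precisely the two alternatives in the proposition. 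For the forward direction of (b), apply the same recursion past the last occurrence of $s_i$ at a position $k$: starting from $c_k = 1$, dominance $\langle \pi, \alpha_i^\vee\rangle \ge 0$ forces $\sum_{r > k}\langle \alpha_{i_r}, \alpha_i^\vee\rangle \ge -1$, which permits at most one non-commuting $s_{i_r}$ past position $k$, with Cartan value exactly $-1$, i.e.\ $|\alpha_{i_r}| \le |\alpha_i|$.

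For the reverse directions, I would build the weight $\pi$ explicitly by prescribing
\[ \langle \pi, \alpha_i^\vee\rangle \;:=\; 1 + \sum_{r > k_{\mathrm{last}}(i)} \langle \alpha_{i_r}, \alpha_i^\vee\rangle \]
whenever $s_i$ appears in $w$ (and $0$ otherwise), and verify the minuscule conditions $\langle \mu_k, \alpha_{i_k}^\vee\rangle = 1$ by descending induction on $k$. The local condition of (a) is designed precisely so that the recursion returns to value $1$ at every subsequent occurrence of $s_i$, while the tail condition of (b) ensures $\pi$ is dominant. The main obstacle I anticipate is the simultaneous consistency of the recursion across different choices of $i$, which reduces to a finite-type case check of how Cartan values for a pair of adjacent simple roots can combine in short sub-words of the heap; once this is confirmed, all four implications follow uniformly from the recursion.
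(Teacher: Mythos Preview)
The paper does not give its own proof of this proposition; it is quoted from Stembridge \cite{stembridge2001minuscule}, Propositions~2.3 and~2.5, and used as a black box. Your approach is essentially Stembridge's: track the pairing $c_k = \langle \mu_k, \alpha_i^\vee\rangle$ along the word and use the telescoping recursion to force the Cartan sums to equal $-2$ between consecutive occurrences of $s_i$ and to be at least $-1$ past the last one.

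Your anxiety at the end about ``simultaneous consistency of the recursion across different choices of $i$'' is misplaced; there is no obstacle there. In the reverse direction, once you define $\pi$ by prescribing $\langle \pi, \alpha_i^\vee\rangle$ for each $i$ (legitimate since the simple coroots are linearly independent), the descending induction on $k$ proceeds one position at a time: the inductive hypothesis that $\langle \mu_r, \alpha_{i_r}^\vee\rangle = 1$ for all $r>k$ already guarantees $\mu_k = \pi - \sum_{r>k}\alpha_{i_r}$, and this formula for $\mu_k$ does not depend on which $i$ you are tracking. At step $k$ you only need to verify $\langle \mu_k, \alpha_{i_k}^\vee\rangle = 1$, which is the single computation you already wrote down (splitting into ``$k$ is the last occurrence of $s_{i_k}$'' versus ``there is a next occurrence $k'$''). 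No additional finite-type case check is required. With that clarification, your sketch is a complete and correct proof, matching the one in Stembridge's paper.
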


\begin{theorem}\label{cor:EYD_minuscule}
Let $w$ be a $\pi$-minuscule element for some dominant integral weight $\pi$, and
let $v\in [id, w]^\pi$.
Then the following are equivalent:

(a) There are no nontrivial excited diagrams of $H(v)$ in $H(w)$, i.e., $|\mathcal E_{H(w)}(H(v))|=1$; 

(b) $wv^{-1}$ is dominant minuscule for some dominant integral weight $\pi'$.
\end{theorem}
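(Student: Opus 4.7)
The strategy is to match the elementary excitation condition on $H(v) \subset H(w)$ against Stembridge's dominant minuscule criterion \Cref{prop:stembridge-crit}(b) applied to the complementary sub-heap $H(wv^{-1}) = H(w) \setminus H(v)$, exploiting the simply-laced hypothesis in force throughout this section. Since $w$ is $\pi$-minuscule, $H(w)$ is a fully commutative heap; the order ideal $H(wv^{-1})$ is then the heap of a fully commutative element (full commutativity is preserved under left factors in reduced decompositions), so by \Cref{prop:stembridge-crit}(a), $wv^{-1}$ is automatically $\pi''$-minuscule for some (not necessarily dominant) weight $\pi''$. Hence the only thing at stake in (b) is the dominance condition \Cref{prop:stembridge-crit}(b) for $H(wv^{-1})$. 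Two structural facts about $H(w)$ will be used repeatedly: (i) any two elements of $H(w)$ with non-commuting colors are comparable, so in particular same-color elements form a totally ordered chain; and (ii) in the simply-laced setting, between any two consecutive same-color elements $p < q$ of color $i$ in $H(w)$, the open interval $(p,q)_{H(w)}$ contains exactly two Dynkin-neighbor-colored elements of $i$ (this is \Cref{prop:stembridge-crit}(a) applied to $w$).

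\textbf{(b) $\Rightarrow$ (a), by contrapositive.} Suppose there exists an elementary excitation $H(v) \vartriangleright D'$ taking $q \in H(v)$ down to $p \in H(wv^{-1})$, both of color $i$. First, $p$ must be a top color-$i$ element of $H(wv^{-1})$: any color-$i$ element $p' \in H(wv^{-1})$ with $p' > p$ is comparable to $q$ by (i); if $p' < q$ it violates the no-intermediate-color clause in \Cref{def:excite_dg}, and if $p' > q$ the filter property of $H(v)$ forces $p' \in H(v)$, a contradiction. By (ii), the two Dynkin-neighbor elements $u_1,u_2 \in (p,q)_{H(w)}$ cannot lie in $H(v)$, since the excitation's Dynkin-edge condition forbids any $u \in H(v) \cap [p,q]$ of color Dynkin-adjacent to $i$. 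Hence $u_1, u_2 \in H(wv^{-1})$ sit above $p$ as two Dynkin-neighbor-colored elements, violating \Cref{prop:stembridge-crit}(b) for $H(wv^{-1})$, so $wv^{-1}$ is not dominant minuscule.

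\textbf{(a) $\Rightarrow$ (b), by contrapositive.} Assume \Cref{prop:stembridge-crit}(b) fails for $H(wv^{-1})$: some top color-$i$ element $p \in H(wv^{-1})$ has at least two Dynkin-neighbor-colored elements above it in $H(wv^{-1})$. Because $w$ is itself dominant minuscule, applying \Cref{prop:stembridge-crit}(b) to $H(w)$ shows $p$ cannot be top color-$i$ in $H(w)$; let $q$ be the immediate successor of $p$ in the color-$i$ chain of $H(w)$. Since $p$ is top in $H(wv^{-1})$, we have $q \in H(v)$. Any Dynkin-neighbor-colored element $x > p$ in $H(wv^{-1})$ is comparable to $q$ by (i), and cannot satisfy $x > q$ (which would force $x \in H(v)$ by the filter property); hence $x \in (p,q)_{H(w)}$. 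By (ii) there are exactly two such $x$, so the two Dynkin-neighbors $u_1, u_2$ in $(p,q)_{H(w)}$ coincide with them and lie entirely in $H(wv^{-1})$. Consequently $H(v) \cap (p,q)$ contains no Dynkin-neighbor-colored elements, so the candidate elementary excitation $q \to p$ is admissible, producing a nontrivial excited diagram and contradicting (a).

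\textbf{Main obstacle.} The key technical point is pinning down that the Dynkin-neighbor-colored elements of $H(wv^{-1})$ above $p$ are precisely those in the interval $(p,q)_{H(w)}$, ruling out extra Dynkin-neighbor elements incomparable to the color-$i$ chain; this reduction rests on the comparability of non-commuting colors (fact (i)) together with the exact count of two in the simply-laced case of \Cref{prop:stembridge-crit}(a). In non-simply-laced types the alternative configuration in \Cref{prop:stembridge-crit}(a) (a single Dynkin-neighbor with Cartan value $-2$) would give only one intermediate element, and the argument would have to interact nontrivially with the length condition in \Cref{prop:stembridge-crit}(b); this is why the simply-laced hypothesis of the section is essential.
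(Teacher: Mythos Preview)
Your overall strategy matches the paper's: both directions are argued by contrapositive, translating the existence of an elementary excitation into a failure of Stembridge's dominance criterion \Cref{prop:stembridge-crit}(b) for $H(wv^{-1})$. Where the paper works with reduced words, you phrase things in the heap poset, but the logic is the same.

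The gap is your simply-laced restriction. There is no such hypothesis in force for this theorem: it is stated and proved in the paper for arbitrary finite $W$, and the paper explicitly remarks in the proof of \Cref{smooth:equiv_app} that this equivalence holds ``for any Weyl group $W$''. Your claim that the hypothesis is essential is incorrect; only minor bookkeeping with the length clause in \Cref{prop:stembridge-crit} is needed. For (a) $\Rightarrow$ (b): if the second bullet of \Cref{prop:stembridge-crit}(a) applies (a single non-commuting $s_j$ between the two $s_i$'s, with $\langle\alpha_j,\alpha_i^\vee\rangle=-2$), that $s_j$ must lie in $H(v)$, else an excitation exists, so nothing non-commuting follows the last $s_i$ in $wv^{-1}$; if two non-commuting terms appear, at least one lies in $H(v)$, and the at most one remaining in $H(wv^{-1})$ automatically satisfies the length bound by the first bullet of (a). For (b) $\Rightarrow$ (a): when only one $s_j$ lies in $(p,q)$ and it falls into $H(wv^{-1})$, the relation $\langle\alpha_j,\alpha_i^\vee\rangle=-2$ forces $\alpha_j$ strictly longer than $\alpha_i$, already violating the length clause of \Cref{prop:stembridge-crit}(b). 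A minor additional point: your detour through full commutativity to see that $wv^{-1}$ is minuscule is roundabout; the paper observes directly that $wv^{-1}$ is $v(\pi)$-minuscule, which is immediate from restricting the defining condition \eqref{minuscule:1} to the prefix $s_{i_1}\cdots s_{i_j}$.
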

\begin{proof} By \Cref{prop:weakstrong} we may choose a reduced expression
$w=s_{i_1}\cdots s_{i_\ell}$ of $w$ such that the expressions
$v=s_{i_{j+1}}\cdots s_{i_\ell}$ and 
$w v^{-1}=s_{i_1}\cdots s_{i_j}$ are reduced. By hypothesis on $w$,
$w v^{-1}$ is  $v(\pi)$-minuscule. Assume that $|\mathcal E_{H(w)}(H(v))|=1$.
To show $w v^{-1}$ is dominant minuscule, we check the conditions in 
\Cref{prop:stembridge-crit}(b) to 
the heap $H(wv^{-1}) = H(w) \setminus H(v)$.
Let $s_i$ be a last occurrence of a simple reflection in $wv^{-1}$.
If $s_i$ is also a last occurrence in $w$ we are done, since $w$ is dominant 
minuscule. Therefore we may assume that $s_i$ occurs again in $v$, and we
pick the first such occurrence. Since $w$ is minuscule, by \Cref{prop:stembridge-crit}(a),
between these two consecutive occurrences of $s_i$, 
there is exactly one element $s_{j_1}$ or two elements $s_{j_1},s_{j_2}$ which do not 
commute with $s_i$. But at least one of $s_{j_1},s_{j_2}$ must be in $v$; otherwise one can construct
an excited diagram by moving the element corresponding to 
$s_i$ from $H(v)$ into the one from $H(wv^{-1})$.
The remaining element (say) $s_{j_2}$ in $wv^{-1}$ which does not commute with $s_i$ (if it exists) 
must necessarily arise from the first condition in \Cref{prop:stembridge-crit}(a), which implies that the length
of the corresponding simple root $\alpha_{j_2}$ is at most that of $\alpha_i$. This verifies the conditions in 
\Cref{prop:stembridge-crit}(b).

We now prove the opposite implication. From \Cref{def:excite_dg} of excited diagrams, 
if $|\mathcal E_{H(w)}(H(v))|>1$ there must be two consecutive occurrences of a reflection 
$s_i$ in $w$, such that one is in $v$, and the other in $wv^{-1}$. Furthermore,
there cannot be any reflection $s_j \in H(v)$ between the two $s_i$'s such that $i,j$ are adjacent 
in the Dynkin diagram. At the same time, since $w$ is minuscule, 
there must be exactly one or two such $s_j$'s between the $s_i$'s in $w$, and if there is 
only one, then $\langle \alpha_j , \alpha_i^\vee \rangle = -2$. Then these $s_j$'s 
must be in $wv^{-1}$. Both situations contradict the condition in 
\Cref{prop:stembridge-crit}(b), thus $wv^{-1}$ is not dominant minuscule.  
 \end{proof}
\begin{lemma}\label{EYD:subexpr}\cite[Proposition 3.4, Remark 3.18]{naruse2019skew}
Let $w=s_{i_1}s_{i_{2}}\cdots s_{i_\ell}$ be a reduced expression for a $\pi$-minuscule element, 
with $\pi$ a dominant integral weight. For any $v\in W^\pi$ satisfying $v\leq w$, there is a bijective correspondence between the set of excited diagrams in
$\mathcal E_{H(w)}(H(v))$ and subexpressions of $s_{i_1}s_{i_{2}}\cdots s_{i_\ell}$ 
which give a reduced expression of $v$.
\end{lemma}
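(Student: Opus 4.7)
By \Cref{lem:wminuscule}(b), every $v \leq w$ in $W^\pi$ is $\pi$-minuscule, hence fully commutative \cite{stembridge2001minuscule}. By \Cref{prop:weakstrong} and the discussion preceding~\Cref{E:FHiso}, $H(v)$ embeds canonically as an order filter of $H(w)$, and the associated subexpression is a reduced expression of $v$. The plan is to prove both directions of the bijection by induction, leveraging full commutativity throughout; note that injectivity of the map $D \mapsto (\text{subexpression at positions of } D)$ is immediate, since different subsets of $\{1,\dots,\ell\}$ yield distinct subexpressions.

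For the forward direction, I would induct on the number of elementary excitations separating $D$ from $H(v)$. The base case $D = H(v)$ is immediate. For the inductive step, suppose $D_0 \vartriangleright D_1$ replaces $q \in D_0$ by $p \in D_1$ with $p < q$ of common color $i$. At the level of subexpressions of $w$, this amounts to deleting $s_i$ at position $q$ and inserting $s_i$ at position $p$. Condition~(2) of \Cref{def:excite_dg} ensures that every $u \in D_0 \cap [p,q]$ has color not adjacent to $i$ in the Dynkin diagram, so the corresponding simple reflection commutes with $s_i$. Hence the subexpression for $D_1$ is obtained from that of $D_0$ by a sequence of commutations, and both yield the same reduced expression of $v$.

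For the backward direction, given a reduced subexpression of $v$ at positions $j_1 < \cdots < j_{\ell(v)}$, corresponding to a subset $D \subset H(w)$, I plan to show $H(v) \unrhd D$ by induction on the nonnegative displacement statistic $\sum_{p \in H(v)} p - \sum_{q \in D} q$, which vanishes precisely when $D = H(v)$. When $D \neq H(v)$, full commutativity of $v$ forces the colored-poset structure on $D$ (inherited from $H(w)$) to be isomorphic to $H(v)$; combined with the order-filter property of $H(v)$, this rigidity should exhibit an \emph{anti-excitation}, namely some $\widetilde{D}$ with $\widetilde{D} \vartriangleright D$, obtained from $D$ by moving one element of color $i$ from its position $p$ to a later position $q$ such that no $i$-colored element of $H(w)$ lies strictly between $p$ and $q$ and no element of $D \cap [p,q]$ has color adjacent to $i$. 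By the same commutation argument as in the forward direction, $\widetilde{D}$ is again a reduced subexpression of $v$, and its displacement is strictly smaller, so the induction closes.

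The main obstacle I anticipate is establishing the existence of an anti-excitation when $D \neq H(v)$ — specifically, verifying condition~(2) of \Cref{def:excite_dg} for the pair $(p, q)$. One must rule out configurations in which every candidate lift of an element of $D$ is ``blocked above'' by an adjacent-color element of $D$. The required structural input is Stembridge's criterion for minuscule elements (\Cref{prop:stembridge-crit}), which constrains how simple reflections of adjacent colors may appear between consecutive occurrences of a given reflection in a minuscule heap; converting this local constraint into the desired unblocked lift is the technical heart of the argument.
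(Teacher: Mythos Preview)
The paper does not supply its own proof of this lemma; it is quoted from \cite[Proposition~3.4, Remark~3.18]{naruse2019skew}. So there is no in-paper argument to compare against, and I will assess your proposal on its own.

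Your forward direction is correct and essentially complete. Condition~(2) of \Cref{def:excite_dg} guarantees that every letter of the subexpression lying strictly between $p$ and $q$ commutes with $s_i$, so an elementary excitation corresponds to a commutation move on the subexpression; by induction from $H(v)$ every excited diagram yields a reduced subexpression of $v$. Injectivity is clear, as you say.

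For the backward direction your outline is sound, but two points need attention. First, the displacement statistic $\sum_{p\in H(v)} p - \sum_{q\in D} q$ only makes sense once you fix a linear extension of $H(w)$, and it is nonnegative with equality iff $D=H(v)$ only after you choose that linear extension so that $H(v)$ occupies the last $\ell(v)$ positions. This is always possible because $H(v)$ is an order filter (equivalently, pick the reduced word of $w$ so that $v$ is a suffix, as in the proof of \Cref{cor:EYD_minuscule}); you should say so explicitly.

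Second, and more substantively, the existence of an anti-excitation when $D\neq H(v)$ is the genuine crux, as you yourself flag, and it is not established in your proposal. The claim that the induced colored-poset structure on $D\subset H(w)$ ``is isomorphic to $H(v)$'' is not literally true (the induced order from $H(w)$ can be strictly finer than the heap order of the subexpression), so the ``rigidity'' you invoke needs to be formulated more carefully. A workable line: since $D$ and $H(v)$ have the same color multiset and $H(v)$ is an order filter, the color-$i$ elements of $H(v)$ are the top segment of the color-$i$ chain in $H(w)$; hence if $D\neq H(v)$ there exist consecutive same-color elements $p<q$ in $H(w)$ with $p\in D$, $q\notin D$. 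Among all such pairs choose one with $p$ of maximal index, and argue via \Cref{prop:stembridge-crit}(a) (applied both to $w$ and to the subexpression $D$, which is also minuscule) that no $u\in D\cap(p,q)$ can have color adjacent to $c(p)$ without producing another such pair with larger index. Filling in this maximality argument is precisely the technical heart you have not yet supplied, and it is essentially what \cite{naruse2019skew} does.
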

If $w \in W$ and $H(w) = \{ p_1, \ldots, p_{\ell(w)}\}$, define $\beta_w : H(w)\to S(w)$, by
$\beta_w(p_j)=s_{i_1} s_{i_2}\cdots s_{i_{j-1}} \alpha_{i_j}$, for $1\leq j\leq \ell$.
\begin{remark}\label{remark:Billey_formula} Under the assumptions in the \Cref{EYD:subexpr},
Billey's formula \cite{billey:kostant} can be written as follows:
\begin{equation}\label{equ:billey}
[Y(v)]|_w= \sum_{D\in \mathcal E_{H(w)}(H(v))} \prod_{p \in D} 
\beta_w (p).
\end{equation}
Using this we may reformulate \Cref{skew:cohomology} as follows.
\begin{equation}\label{skew:NO}
\# \Red(wv^{-1}) = (\ell(w)-\ell(v))! \sum_{D\in \mathcal E_{H(w)}(H(v))} \frac{1}{\prod_{p \in H(w)\setminus D} 
ht(\beta_w(p))}.
\end{equation}
\end{remark}

\subsection{Main results}\label{subsec:main}
Finally, we can state the main results in this section. Recall that for any point $p$ in a variety $X$, there is a notion of multiplicity $\mult_p (X)$ with the property that $p$ is smooth if and only if $\mult_p(X)=1$; we 
refer e.g. to \cite[Section 4.7]{billey.lakshimibai:singula_loci} for details.

Throughout this section we fix $w$ a $\pi$-minuscule element where $\pi$ is dominant integral.
As usual, let $P$ be the parabolic group defined by $\Stab_{W}(\pi)=W_P$.
\begin{theorem}\label{smooth:equiv_app}
Assume that $W$ is a simply-laced Weyl group and let $v \in W^P$ such that $v \leq w$.
Then the following conditions are equivalent:

(1)  $Y(v) \subset G/P$ is non-singular at $wP$;

(2) There are no nontrivial excited diagrams of $H(v)$ in $H(w)$, i.e., $|\mathcal E_{H(w)}(H(v))|=1$;

(3) $w v^{-1}$ is $\pi'$-minuscule for some dominant integral weight $\pi'$.

\noindent
Moreover these conditions imply that for any $z \in [v,w]^P$, {$S(z/v)=S(z v^{-1})$}. 

Furthermore, in this case,
the $\Lambda$-Bruhat graphs for $[v,w]^P$ and $[id, w v^{-1}]^{P'}$, 
equipped with the {constant admissible functions 
$\Lambda\equiv \pi$ respectively $\Lambda'\equiv \pi'$},
are isomorphic. Here
$P'$ is the parabolic subgroup defined by $\Stab_{W}(\pi')=W_{P'}$. 
\end{theorem}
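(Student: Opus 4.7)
The plan is to condense the three-way equivalence into a single combinatorial identity relating excited diagrams with the weights appearing in Kumar's smoothness criterion (\Cref{thm:kumar}), and then to transport the argument to arbitrary $z\in[v,w]^P$ to obtain both ``moreover'' statements. The equivalence (2)$\Leftrightarrow$(3) is already \Cref{cor:EYD_minuscule}, so the main content is (1)$\Leftrightarrow$(2); the simply-laced hypothesis enters only through \Cref{cor:EYD_minuscule}.

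The key combinatorial identity I would prove is
\[
\{\alpha\in R^+\mid v\nleq s_\alpha w\} \;=\; \beta_w(C),\qquad C\;:=\;\bigcap_{D\in\mathcal E_{H(w)}(H(v))} D \;\subseteq\; H(v).
\]
Indeed, if $\alpha>0$ satisfies $s_\alpha w>w$, then $v\le w<s_\alpha w$, so the left-hand side is contained in $S(w)=\beta_w(H(w))$ (using that $\beta_w\colon H(w)\to S(w)$ is a bijection since $w$ is minuscule). For $p_j\in H(w)$, the element $s_{\beta_w(p_j)}w$ is represented by the reduced expression of $w$ with the $j$-th letter removed, so by \Cref{EYD:subexpr} together with the subword characterization of Bruhat order, $v\le s_{\beta_w(p_j)}w$ is equivalent to the existence of some $D\in\mathcal E_{H(w)}(H(v))$ avoiding $p_j$, i.e.~$p_j\notin C$. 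In particular $|\{\alpha>0\mid v\nleq s_\alpha w\}|=|C|\le|H(v)|=\ell(v)$, with equality iff $C=H(v)$ iff condition (2) holds.

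From this, (2)$\Rightarrow$(1) follows immediately: Billey's formula \eqref{equ:billey} collapses to $[Y(v)]|_w=\prod_{p\in H(v)}\beta_w(p)=\prod_{\beta\in\Phi(\tilde N_{w,Y(v)}G/P)}\beta$, so $Y(v)$ is smooth at $wP$ by Kumar's criterion \Cref{thm:kumar}. For (1)$\Rightarrow$(2), smoothness of $Y(v)$ at $wP$ gives $\dim T_w Y(v)=\dim Y(v)=\dim G/P-\ell(v)$, and the inclusion $\tilde T_w Y(v)\subseteq T_w Y(v)$ recalled in \S\ref{ss:weights} yields $\dim\tilde N_{w,Y(v)}G/P\ge\ell(v)$; combined with $|C|\le\ell(v)$, the identity forces $|C|=\ell(v)$ and hence $C=H(v)$. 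For the first ``moreover'' claim, $\mathrm{Sing}(Y(v))$ is a closed $B^-$-invariant subset, hence a union of Schubert subvarieties $Y(u)$ with $u\ge v$; if it contained $zP$ for some $z\in[v,w]^P$ then $Y(z)\subseteq\mathrm{Sing}(Y(v))$ and $wP\in Y(z)$, contradicting (1). Thus $Y(v)$ is smooth at every $zP\in[v,w]^P$, so the equivalences yield $|\mathcal E_{H(z)}(H(v))|=1$, and applying the key identity to $(v,z)$ gives $S(z/v)=\beta_z(H(zv^{-1}))$; since the reduced expression $z=(zv^{-1})\cdot v$ is inherited from that of $w$, one has $\beta_z(p_j)=\beta_{zv^{-1}}(p_j)$ for every $p_j\in H(zv^{-1})$, so $S(z/v)=S(zv^{-1})$.

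Finally, the map $\Phi\colon[v,w]^P\to[id,wv^{-1}]^{P'}$, $z\mapsto zv^{-1}$, is the intended isomorphism of $\Lambda$-Bruhat graphs. By (3), \Cref{prop:weakstrong}, and \Cref{lem:wminuscule}, $zv^{-1}$ is $\pi'$-minuscule and lies in $W^{P'}$, while both intervals parametrize the filters of the common poset $H(wv^{-1})=H(w)\setminus H(v)$ via \eqref{E:FHiso}, so $\Phi$ is a poset isomorphism. Every cover $z_1\lessdot z_2$ in $[v,w]^P$ has the form $z_2=s_i z_1$ for some simple root $\alpha_i$ (from the proof of \Cref{prop:weakstrong} applied to $\pi$-minuscule covers), and $\Phi$ sends it to the cover $z_1 v^{-1}\to s_i z_1 v^{-1}=z_2 v^{-1}$ carrying the same simple root; both Chevalley multiplicities equal $1$ by \Cref{cor:multweight}(1). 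Since \Cref{cor:multweight}(2) writes $\mathcal W_\pi(z)$ as the sum of the simple roots along any directed path $z\to\cdots\to w$, and $\Phi$ bijects such paths to paths $zv^{-1}\to\cdots\to wv^{-1}$ with the identical sequence of simple roots, $\mathcal W_\pi(z)=\mathcal W_{\pi'}(zv^{-1})$ and $\Phi$ is an isomorphism of decorated graphs. The main technical obstacle in this plan is the core identity, specifically leveraging \Cref{EYD:subexpr} to convert the Bruhat condition $v\le s_{\beta_w(p_j)}w$ precisely into the combinatorial statement $p_j\notin C$.
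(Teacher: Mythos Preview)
Your route to (1)$\Leftrightarrow$(2) and to $S(z/v)=S(zv^{-1})$ is genuinely different from the paper's, and in some ways more direct. The paper obtains (1)$\Leftrightarrow$(2) via \Cref{lemma:eqmult}, which computes $\mult_w(Y(v))$ using the specialization $\varphi_{w,\pi}$ and the Graham--Kreiman identification of minuscule with cominuscule; \emph{that} is where the simply-laced hypothesis enters, not in \Cref{cor:EYD_minuscule}, which is stated and proved for any $W$. Your key identity $\{\alpha>0:v\nleq s_\alpha w\}=\beta_w\bigl(\bigcap_D D\bigr)$ bypasses this entirely, going straight through Kumar's criterion and the inclusion $\tilde T_w Y(v)\subseteq T_w Y(v)$ from \S\ref{ss:weights}. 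For $S(z/v)=S(zv^{-1})$ the paper takes the scenic route through \eqref{eq:max_path} and unique factorization of positive roots, whereas you read it off directly from the key identity applied at $z$ together with the prefix compatibility $\beta_z|_{H(zv^{-1})}=\beta_{zv^{-1}}$. Both shortcuts are legitimate and arguably cleaner.

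There is one real gap, however. In your last paragraph you only verify that \emph{covers} correspond under $\Phi$, but the $\Lambda$-Bruhat graph of \Cref{def:ref_di} has an edge $x\to y$ whenever $yW_P=s_\beta xW_P$ with $x<y$, not just for covers (see e.g.\ the edge $id\to s_1s_2s_1$ in \Cref{ex:A2}). Matching weight functions and cover-labels does not by itself force these longer edges to correspond: one must rule out that $xv^{-1}$ and $yv^{-1}$ fail to be reflection-related modulo $W_{P'}$. The fix is exactly what the paper does, and all the ingredients are already in your hands: for an edge $x\to y$ labelled $\beta$ one has $\beta\in S(y/v)=S(yv^{-1})$, so there is an edge into $yv^{-1}$ with label $\beta$ in the second graph; its source then coincides with $xv^{-1}$ by your weight equality $\mathcal W_\pi=\mathcal W_{\pi'}\circ\Phi$ and the injectivity of $\mathcal W_{\pi'}$ from \Cref{cor:multweight}(3). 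You should insert this step rather than jumping from ``covers match'' to ``decorated graphs are isomorphic.''
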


For the proof, we need some results of Graham and Kreiman \cite{graham.kreiman:cominuscule_pt}.
An element $x \in W$ is {\bf cominuscule} if there exists $h \in \mathfrak{h}$ such that for any root
$\beta$ in the inversion set of $x^{-1}$, we have $\beta(h) = -1$ 
(see~\cite[Definition 5.6]{graham.kreiman:cominuscule_pt}). 
As observed in {\em loc.cit.} this is equivalent 
to the notion of a (Peterson) minuscule element, and the two notions coincide for 
simply laced Weyl groups.
Indeed, in this case we may identify $\mathfrak h^{*} \simeq \mathfrak h$ by 
$\beta \leftrightarrow \beta^\vee$, and we take  
$h_o=w(\pi)\in \mathfrak h$. Then from \Cref{lemma:ubetav},
\begin{equation}\label{E:comin}
\beta(h_o)= \langle \beta, h_o \rangle =\langle w(\pi),\beta^\vee \rangle=-1 \/.
\end{equation}

This motivates the definition of 
$\varphi_{w,\pi}:S(\mathfrak h)\to \mathbb C$ given by
$\varphi_{w,\pi}(h):=\langle -w(\pi),h \rangle$.
Then for any $\beta \in S(w)$, $\varphi_{w,\pi}(\beta)=1$;
\begin{lemma}\label{lemma:eqmult} 
Assume the hypotheses of \Cref{smooth:equiv_app}. Then 
the multiplicity of $Y(v) \subset G/P$ at $w$ equals to:
\begin{equation*}\label{m_v(w)}
\mult_{w}(Y(v))=\varphi_{w,\pi}([Y(v)]|_{w})=|\mathcal E_{H(w)}(H(v))| \/.
\end{equation*}
\end{lemma}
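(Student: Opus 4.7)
The plan is to establish the two asserted equalities independently. The right-hand one
$\varphi_{w,\pi}([Y(v)]|_w) = |\mathcal E_{H(w)}(H(v))|$ is an immediate consequence of Billey's formula combined with the cominuscule specialization, while the left-hand one
$\mult_w Y(v) = \varphi_{w,\pi}([Y(v)]|_w)$
invokes the Graham--Kreiman cominuscule multiplicity theory.

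For the right-hand equality, I would apply Billey's formula \eqref{equ:billey}:
\[
[Y(v)]|_w = \sum_{D \in \mathcal E_{H(w)}(H(v))} \prod_{p \in D} \beta_w(p).
\]
Each $\beta_w(p)$ lies in $S(w)$ by construction. Since $W$ is simply-laced and $w$ is $\pi$-minuscule, $w$ is cominuscule with cocharacter $h_o = w(\pi)$, and \eqref{E:comin} gives $\varphi_{w,\pi}(\beta) = -\beta(h_o) = 1$ for every $\beta \in S(w)$. Consequently $\varphi_{w,\pi}$ collapses every monomial in Billey's formula to $1$, yielding the cardinality $|\mathcal E_{H(w)}(H(v))|$.

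For the left-hand equality, I would first reduce to a local computation on the affine chart $X(w)^\circ \simeq \mathbb A^{\ell(w)}$, which is an open $T$-stable neighborhood of $wP$ in $G/P$: since $Y(v) \cap X(w)^\circ$ has the same local ring as $Y(v)$ at $wP$, one has $\mult_w Y(v) = \mult_{wP}(Y(v) \cap X(w)^\circ)$. The tangent weights of $X(w)^\circ$ at $wP$ are precisely the elements of $S(w)$, all sent to $1$ by $\varphi_{w,\pi}$; thus the cocharacter $-h_o$ is an attractive one-parameter subgroup with uniform weight $1$ on the tangent space. In this uniform-attractive regime, the Brion/Rossmann equivariant-multiplicity formalism \cite{brion:eq-chow}, specialized to cominuscule fixed points as in Graham--Kreiman \cite{graham.kreiman:cominuscule_pt}, implies that the evaluation $\varphi_{w,\pi}([Y(v)]|_w)$ of the homogeneous polynomial $[Y(v)]|_w$ (of degree $\ell(v)$) computes exactly the Hilbert--Samuel multiplicity $\mult_{wP}(Y(v) \cap X(w)^\circ)$.

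The main obstacle will be the rigorous justification of this final specialization step, which is exactly where the simply-laced hypothesis is essential: it is this hypothesis that forces Peterson's notion of $\pi$-minuscule to coincide with the cominuscule notion, producing an $h_o$ for which $\varphi_{w,\pi}$ acts by $1$ uniformly on the tangent weights at $wP$. Once the cominuscule multiplicity formula of Graham--Kreiman is in hand, combining it with the Billey computation above closes the chain of equalities.
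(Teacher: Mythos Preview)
Your proposal is correct and follows essentially the same approach as the paper: Billey's formula together with the specialization $\varphi_{w,\pi}(\beta)=1$ for $\beta\in S(w)$ gives the second equality, and the Graham--Kreiman cominuscule multiplicity result gives the first. The only difference is that the paper first passes from $G/P$ to $G/B$ via the smooth projection (so that both the multiplicity and the localization are unchanged) before invoking \cite[Corollary~5.12]{graham.kreiman:cominuscule_pt}, which is stated there for $G/B$; you instead sketch the underlying affine-chart argument directly in $G/P$, which is fine but slightly less economical than simply citing their result after the reduction.
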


\begin{proof} Denote by $Y(vB)$ the corresponding 
Schubert variety in $G/B$. Since the projection map $G/B \to G/P$ is a 
smooth map, 
$\mult_{wP}(Y(vP))=\mult_{wB}(Y(vB))$. Furthermore,
$[Y(v)]|_w = [Y(wB)]|_{wB}$ as elements in $H^*_T(pt)$. 
Then the first equality follows from \cite[Corollary 5.12]{graham.kreiman:cominuscule_pt},
and the second from \Cref{equ:billey}.
\end{proof}
\begin{example} In \Cref{ex:typeA_d-comp}, there are $6$ excited diagrams,
showing that the multiplicity of $Y(v)$ at $w$ equals to $6$.
\end{example}

\begin{proof}[Proof of \Cref{smooth:equiv_app}] 
The equivalence $(1) \Longleftrightarrow (2)$
follows from \Cref{lemma:eqmult}, 
and the equivalence $(2) \Longleftrightarrow (3)$ from
\Cref{cor:EYD_minuscule} 
(for any Weyl group $W$).

To prove the remaining claims, first observe that 
since $Y(v)$ is non-singular at $w$, it will be non-singular
at any $z \in [v,w]^P$. To prove that $S(z/v)= S(zv^{-1})$, 
we utilize the equation \eqref{eq:max_path} in two
situations: the intervals $[v,z]^P$ and $[id,zv^{-1}]^{P'}$. 
By \Cref{cor:skewred}, there is a weight preserving 
bijection between the maximal length paths in $[v,z]^P$ 
and $[id,z v^{-1}]^{P'}$. Consider the equality in \Cref{col:Nak},
applied to the intervals above, and restricted to the terms of smallest degree; 
this corresponds to taking the sum over maximal length paths 
on the left hand side. The corollary and the considerations above imply that
\[ \prod_{\beta \in S(z v^{-1})} \beta = \prod_{\beta' \in S(z/v)} \beta' \/. \]
Then the claim follows because all roots $\beta, \beta'$ are positive and in each side they are all mutually prime when regarded 
as linear forms.

Finally, we prove the isomorphism between the standard
$\Lambda$-Bruhat graphs on $[v, w]^P$ and $[id, w v^{-1}]^{P'}$. 
First of all, we have a poset isomorphism $\Phi_{v,w}:[v, w]^P\to [id, w v^{-1}]^{P'}$ defined by
$z \mapsto z v^{-1}$ and induced by the isomorphism in \eqref{E:FHiso}:
\[[v, w]^\pi \simeq \mathcal F(H(w)\setminus H(v))=\calF(H(wv^{-1}))\simeq [id,wv^{-1}]^{\pi'} \/.\]
We prove next that the edges in the two $\Lambda$-Bruhat graphs match. 
Suppose $x \overset{\beta}{\to} y$ is an edge in the $\Lambda$-Bruhat graph of $[v,w]^P$.
Since $Y(v)$ is non-singular at $wP$, {$Y(v)$ will be non-singular at $yP$}, thus 
$\beta\in S(y/v)=S(yv^{-1})$. Thus, there is an edge with 
target $yv^{-1}$ and weight $\beta$ in the $\Lambda$-Bruhat graph of $[id,wv^{-1}]^{P'}$. 
Suppose the source of this edge is 
$zv^{-1}$ for some $z\in [v,w]^P$. By \Cref{cor:multweight}(2), 
\[\calW_\pi(x)-\calW_\pi(y)=\beta=\calW_{\pi'}(zv^{-1})-\calW_{\pi'}(yv^{-1}) \/, \]
and the last expression equals $\sum \alpha_i$, where
$\alpha_i$'s appear in any maximal chain from $zv^{-1}$ to $yv^{-1}$. By \Cref{cor:skewred},
such chains are in bijection to maximal chains from $z$ to $y$, and contain the same weights. 
This proves that  
\[  \calW_{\pi'}(zv^{-1})-\calW_{\pi'}(yv^{-1})  = \calW_\pi(z)-\calW_\pi(y) \/. \]
Hence, $\calW_\pi(x)=\calW_\pi(z)$, thus $x=z$ by the injectivity of $\calW_\pi$, see \Cref{cor:multweight}(3).
Same argument, using maximal chains this time in the intervals $[y,w]^\pi$ and $[yv^{-1},wv^{-1}]^{\pi'}$, 
and taking into account that $\calW_{\pi'}(wv^{-1})= \calW_\pi(w)=0$,
shows that
$\calW_{\pi'}(yv^{-1})= \calW_\pi(y)$, 
and it finishes the proof. \end{proof}

\section{Examples}\label{sec:examples} In this section we give some examples illustrating \Cref{thm:genNak} and its corollaries, and the heap constructions from \S \ref{sec:heaps}. 

\subsection{Grassmannians} Let $X= G/P=\Gr(k,n)$ be a Grassmann manifold, where
$P$ is the maximal parabolic subgroup with $\Delta_P = \Delta \setminus \{ \alpha_k \}$. 
The permutations in $W^P$ are in bijection to the set of partitions 
$\mu = (\mu_1, \ldots, \mu_k)$ in the $k \times (n-k)$ rectangle.
We denote by $w_\mu$ the element in $W^P$ corresponding to $\mu$. This bijection has the property that
$\ell(w_\mu) = |\mu| = \mu_1 + \ldots + \mu_k$ (the number of boxes of $\mu$) and $w_\mu < w_\nu$ if and 
only if $\mu \subset \nu$. Furthermore, there exists an edge $w_\mu \to w_\nu$ in the Bruhat graph if and only if the skew shape
$\nu/\mu$ is a {\em rim hook} in $\nu$. Recall that a rim hook in $\nu$ is a connected collection $B$ of 
boxes which intersect the boundary of $\nu$ in at least one point and such that $\nu \setminus B$ remains a partition. 
One may read the reduced decompositions of 
$w_\mu,w_\nu$ (cf.~\cite{ikeda2009excited,BCMP:QKChev}, the latter, for more general cominuscule Grassmannians) and the root $\beta$ such that $w_\nu W_P = s_\beta w_\mu W_P$ directly from the diagrams involved. We illustrate this in the example below.
\begin{example}\label{example:Gr512} Take $G/P = \Gr(5,12)$; then $\Delta_P = \{ \alpha_i: 1 \le i \le 11\} \setminus \{ \alpha_5 \}$. Consider $\nu = (7,7,7,5,5)$ included in the $5 \times 7$ rectangle. The partition $\nu$ corresponds to the green portion in the left diagram below.
\begin{center}
\begin{tabular}{cc}
\ytableausetup{centertableaux,boxsize=normal}
\begin{ytableau}
*(green)  \alpha_5   & *(green){\alpha_6}  & *(green){\alpha_7}
& *(green){ \alpha_8} & *(green) {\alpha_9} & *(green){\alpha_{10}} & *(green) {\alpha_{11}}\\
*(green) {\alpha_4} & *(green) { \alpha_5} & *(green){\alpha_6} & *(green){ \alpha_7} &
*(green) {\alpha_8} & *(green) {\alpha_{9}} & *(green) {\alpha_{10}}\\
*(green) {\alpha_3} & *(green) { \alpha_4} & *(green){\alpha_5} & *(green) { \alpha_6} &
*(green){\alpha_7} & *(green){\alpha_{8}} & *(green) {\alpha_{9}}\\
*(green) {\alpha_2} & *(green) { \alpha_3} & *(green){\alpha_4} & *(green){ \alpha_5} &
*(green) {\alpha_6} & {\alpha_{7}} & {\alpha_{8}}\\
*(green) {\alpha_1} & *(green) { \alpha_2} & *(green){\alpha_3} & *(green){ \alpha_4} &
*(green) {\alpha_5} & {\alpha_{6}} & {\alpha_{7}}
\end{ytableau} \hskip2cm
\ytableausetup{centertableaux,boxsize=normal}
\begin{ytableau}
\alpha_5   & {\alpha_6}  & {\alpha_7}
& { \alpha_8} & {\alpha_9} & *(yellow){\alpha_{10}} & *(green) {\alpha_{11}}\\
{\alpha_4} & { \alpha_5} & {\alpha_6} & { \alpha_7} & *(yellow)
{\alpha_8} & *(yellow) {\alpha_{9}} & *(green) {\alpha_{10}}\\
 {\alpha_3} & *(orange) { \alpha_4} & *(orange){\alpha_5} & *(cyan) { \alpha_6} &
*(green){\alpha_7} & *(green){\alpha_{8}} & *(green) {\alpha_{9}}\\
*(orange) {\alpha_2} & *(orange) { \alpha_3} & *(cyan){\alpha_4} & *(cyan){ \alpha_5} &
*(green) {\alpha_6} & {\alpha_{7}} & {\alpha_{8}}\\
*(green) {\alpha_1} & *(green) { \alpha_2} & *(green){\alpha_3} & *(green){ \alpha_4} &
*(green) {\alpha_5} & {\alpha_{6}} & {\alpha_{7}}
\end{ytableau}
\end{tabular}
\end{center}
One may read a reduced decomposition for $w_\nu$ by reading the labels of $\nu$ bottom to top, right to left:
$w_\nu = s_5s_4s_3s_2s_1s_6s_5s_4s_3s_2s_9s_8s_7s_6s_5s_4s_3s_{10}s_9s_8s_7s_6s_5s_4s_{11}s_{10}s_9s_8s_7s_6s_5$. Consider $\mu = (5,4,1)$. A path in the Bruhat graph from $\mu$ to $\nu$ is given by removing  rim hooks starting from $\nu$. One example is the path below, where the rim hooks removed are, in order, starting from $\nu$:
{\color{green} green}, {\color{cyan} cyan}, {\color{orange} orange}, {\color{yellow} yellow}.
\[
\ytableausetup{centertableaux,boxsize=0.6em}
\mu=\begin{ytableau} 
{}& {} & {} & {} & {} \\
{}& {} & {} & {} \\ {} 
\end{ytableau}
\longrightarrow  
\mu_2=\begin{ytableau}
{} & {} & {} & {} & {} & *(yellow) {} \\
{} & {} & {} & {} & *(yellow) {} & *(yellow) {}\\ {} 
\end{ytableau} 
\longrightarrow  
\mu_3=\begin{ytableau}
 {} & {}  & {}
& {} & {} & *(yellow){} \\
{} & {} & {} & {} & *(yellow)
{} & *(yellow) {} \\
 {} & *(orange) {} & *(orange){} \\
*(orange) {} & *(orange) {}
\end{ytableau}
\longrightarrow
\mu_4=\begin{ytableau}
{}  & {}  & {}
& {} & {} & *(yellow){} \\
{} & {} & {} & {} & *(yellow)
{} & *(yellow) {} \\
 {} & *(orange) {} & *(orange){} & *(cyan) {}\\
*(orange) {} & *(orange) {} & *(cyan){} & *(cyan){}
\end{ytableau}
\longrightarrow
\nu=\begin{ytableau}
{}  & {}  & {}
& {} & {} & *(yellow){} & *(green) {}\\
{} & {} & {} & {} & *(yellow)
{} & *(yellow) {} & *(green) {}\\
 {} & *(orange) {} & *(orange){} & *(cyan) {} &
*(green){} & *(green){} & *(green) {}\\
*(orange) {} & *(orange) {} & *(cyan){} & *(cyan){} &
*(green) {} \\
*(green) {} & *(green) {} & *(green){} & *(green){} &
*(green) {} 
\end{ytableau}
\ytableausetup{centertableaux,boxsize=normal}
\]
The root $\beta$ associated to an edge $\lambda_1 \to \lambda_2$ is the sum of the simple roots in the 
boxes of the skew shape $\lambda_2/\lambda_1$. In our case, if $w_\nu W_P= s_\beta w_{\mu_4} W_P$
then $\beta = \sum_{i=1}^{11} \alpha_i$ (the sum of the labels of the green rim hook).   
\end{example}  
In order to illustrate \Cref{col:Nak} we consider the standard admissible function $\Lambda \equiv \omega_P = \omega_k$. Then each element $w \in W^P$ is $\omega_k$-minuscule, and all edge multiplicities are 
equal to $1$. Further, $Y(w_\mu W_P)$ is smooth at $w_\nu W_P$ if and only if the skew partition $\nu/\mu$ is 
a union of straight shapes. (This may be deduced e.g. from \cite[Corollary 9.2, 9.3]{ikeda2009excited} or from \cite{graham.kreiman:excited}.) In the example above, $Y(w_\mu)$ is singular at $w_\nu$. 
\begin{example}\label{ex:Gr(2,5)}
We now consider $G/P =\Gr(2,5)$ and the standard admissible function $\Lambda\equiv \omega_2$. 
Take $ v:=w_\emptyset <  w:=w_{\tableau{5}{ {} & {}& {} \\ {} & {} & {}}}$; of course $Y(v)=G/P$ is smooth.  
Consider the path
\[ \emptyset \longrightarrow \begin{ytableau} \alpha_2 & \alpha_3 \end{ytableau} \longrightarrow  \begin{ytableau} \alpha_2 & \alpha_3 \\ *(orange) \alpha_1 
\end{ytableau} \longrightarrow  \begin{ytableau}
\alpha_2   & {\alpha_3}  & *(green){\alpha_4} \\
*(orange) {\alpha_1} & *(green) { \alpha_2} & *(green) {\alpha_3} \end{ytableau}
\]
As before, we indicated the removed hooks with colors, and also their box labels.
In the summation from \Cref{col:Nak} this corresponds to the product
\[ \frac{1}{\beta_1 + \beta_2 + \beta_3} \times\frac{1}{\beta_1 + \beta_2} \times \frac{1}{\beta_1} = \frac{1}{\alpha_1+2 \alpha_2 + 2 \alpha_3 +\alpha_4}\times \frac{1}{\alpha_1+\alpha_2 + \alpha_3 +\alpha_4} \times \frac{1}{\alpha_2+\alpha_3+\alpha_4} \/. \]
\end{example}
 
\subsection{Submaximal isotropic Grassmannians} In this section we consider $G/P= \mathrm{IG}(k,2n)$,
the space parametrizing $k$-dimensional subspaces of $\C^{2n+1}$ isotropic with 
respect to a symmetric non-degenerate bilinear form on $\C^{2n+1}$.~Then 
$\Delta = \{ \alpha_i: 1 \le i \le n \}$ and by convention $\alpha_n$ is short. 
In this case, $\Delta_P = \Delta \setminus \{\alpha_k\}$ and the multiplicities 
may be $2$. This is an example of a `non-minuscule' flag manifold, and the equality from 
\Cref{col:NakP} is in general stronger than the Nakada's colored hook formula. 

We consider the standard admissible function $\Lambda \equiv \omega_k$.
The root $\beta$ for an edge $u \to u'$ such that $u'W_P= s_\beta u W_P$ may be found 
from \Cref{E:Wla}:
\begin{equation*}\label{E:betaIG} 
\beta = \frac{\mathcal{W}_\Lambda(u) - \mathcal{W}_\Lambda(u')}{m(u,u')} = \frac{u (\omega_k) - u'(\omega_k)}{m(u, u')} \/. 
\end{equation*}

To illustrate, consider $\mathrm{IG}(2,7)$ and the standard admissible function $\Lambda \equiv \omega_2$.
Recall the $\Lambda$-Bruhat graph from \Cref{ex:B3} above, where the simple edges have multiplicity $1$ 
and the double edges have 
multiplicity $2$. The denominator at $u$ equals to $\mathcal{W}_\Lambda(u)= u (\omega_2) - w_0^P(\omega_2)$.

\begin{minipage}{11.5cm}
\begin{tikzpicture}

\node (v0) at (10,8){$s_2 s_1 s_3 s_2$};

\node (v1) at (10.6,6){$\frac{s_1 s_3 s_2}{2 \alpha_2}$};

\node (v2) at (15,6){$\frac{s_2 s_3 s_2}{\alpha_1+\alpha_2}$};

\node (v3) at (6,4){$\frac{s_1 s_2}{2\alpha_2+2 \alpha_3}$};

\node (v4) at (10.3,4){$\frac{s_3 s_2}{\alpha_1+2\alpha_2}$};

\node (v5) at (9,2){$\frac{s_2}{\alpha_1+2\alpha_2+2\alpha_3}$};

\node (v6) at (11,0){$\frac{id}{\alpha_1+3\alpha_2+2\alpha_3}$};

\draw[<-,double distance=0.8pt] (v0)-- node[right]{}(v1);
\draw[<-] (v0)--node[right] {}(v2);
\draw[<-,double distance=0.8pt] (v0)--node[left] {}(v3);

\draw[<-,double distance=0.8pt] (v1)--node[above] {}(v3);
\draw[<-] (v1)--node {}(v4);

\draw[<-] (v2)--node[left] {}(v4);
\draw[<-] (v2)--node {}(v5);
\draw[<-,double distance=0.8pt] (v2)--node[right] {}(v6);

\draw[<-] (v3)--node[right] {}(v5);
\draw[<-,double distance=0.8pt] (v4)--node {}(v5);
\draw[<-] (v5)--node{}(v6);

\draw[bend right,distance=2cm] (v3) to node{}(v6)[<-];
\draw[<-] (v4)--node{}(v6);

\draw[bend right,distance=1cm] (v0) to 
node[below] {${}$} 
(v5) [<-];
\draw[bend left,distance=1.5cm] (v1) to
node[below] {} (v6)[<-];
\end{tikzpicture}
\end{minipage}

Every Schubert 
variety is smooth at $s_2 s_1 s_3s_2$ (the Schubert point), except for $Y(s_2)$ (the Schubert divisor). This may be checked directly e.g. by the smoothness criterion from \Cref{thm:kumar}; we will also recover it from \Cref{thm:genNak} in an example below.

\subsubsection{Example 1: $v=s_3 s_2$, $w=s_2 s_1 s_3s_2$} There are two directed paths from elements $u \ge v$ to $w$ of length $2$, two directed paths of length $1$, and the trivial path, as follows:
\begin{itemize} \item The path $v \to s_1 s_3 s_2 \to w$ contributes with 
$\frac{1}{\alpha_1+2\alpha_2} \times \frac{2}{2 \alpha_2}$;

\item The path $v \to s_2 s_3 s_2 \to w$ contributes with
$\frac{1}{\alpha_1 + 2 \alpha_2}\times  \frac{1}{\alpha_1 + \alpha_2}$;

\item The path $s_1 s_3 s_2 \to w$ contributes with
$\frac{2}{2\alpha_2}$;

\item  The path $s_2 s_3 s_2 \to w$ contributes with
$\frac{1}{\alpha_1+ \alpha_2}$;

\item The trivial path $w=w$ contributes with $1$.
\end{itemize}
In this case \Cref{thm:genNak} (or \Cref{col:NakP}) states that
\[ \begin{split} 1+ \frac{1}{\alpha_1+ \alpha_2} + \frac{2}{2\alpha_2} +\frac{1}{\alpha_1 + 2 \alpha_2}\times  \frac{1}{\alpha_1 + \alpha_2} + \frac{1}{\alpha_1+2\alpha_2} \times \frac{2}{2 \alpha_2}
= \left(1+ \frac{1}{\alpha_2}\right) \left(1+ \frac{1}{\alpha_1+\alpha_2}\right) \/. \end{split}\]

\subsubsection{Example 2: $v=s_2$, $w=s_2s_3 s_2$} In this case $Y(s_2)$ is singular at $w$, therefore we do not expect
the identity from \Cref{thm:genNak} to hold. (However, observe that $Y(s_2)$ is rationally smooth at $w$, since 
$\ell(w) - \ell(v) <3$.) Then:
\begin{itemize}
\item The path $v \to s_3 s_2 \to w$ contributes with 
$\frac{2}{\alpha_2 + 2 \alpha_3} \times \frac{1}{\alpha_2}$;

\item The path $s_2 \to w$ contributes with $\frac{1}{\alpha_2+ 2 \alpha_3}$;

\item The path $s_3 s_2 \to w$ contributes with $\frac{1}{ \alpha_2}$;
\item The trivial path contributes with $1$.
\end{itemize}
The roots $\beta \in S(w/v)$ are $\alpha_2,\alpha_2+ 2 \alpha_3$. Observe that 
$$
1+\frac{1}{\alpha_2}+\frac{1}{\alpha_2+2 \alpha_3}+\frac{1}{\alpha_2}\times \frac{2}{\alpha_2+2 \alpha_3}
\neq
\left(1+\frac{1}{\alpha_2}\right)\left(1+\frac{1}{\alpha_2+2 \alpha_3}\right).
$$
By \Cref{thm:genNak}, this confirms that $Y(s_2)$ is singular at $s_1s_3s_2$.

\subsection{More examples of heaps and excited diagrams}
The purpose of this section is to show two examples of heaps and excited diagrams, 
as utilized in section \ref{sec:heaps}. 

\begin{example}\label{Gr510} In this example we show how the heap from 
\Cref{ex:Heap1} above may be seen as a Grassmannian example associated to a skew shape.
Let $X=\Gr(5,10)$ and $w=s_2 s_1\cdot s_3 s_2 \cdot s_5 s_4 s_3 \cdot 
s_8 s_7 s_6 s_5 s_4 \cdot 
s_9 s_8 s_7 s_6 s_5$, 
$v=  s_4 s_3\cdot  s_5 s_4\cdot  s_6 s_5$.
In terms of partitions, $w$ corresponds to 
$(5,5,3,2,2)$ and $v$ to $(2,2,2)$; cf. 
\Cref{example:Gr512}.
Since the skew shape $wv^{-1} = (5,5,3,2,2)/(2,2,2)$ is a union of straight shapes,
$Y(v)$ is smooth at $w$ and $|\mathcal E_{H(w)} (H(v))|=1$. (As usual, $H(v)$ is indicated in red, and
$H(w) \setminus H(v) = H(w v^{-1})=\{p_1,\ldots, p_{11}\}$.)
\begin{figure}[h]
\begin{tikzpicture}[x=1.0em,y=1.0em]

\foreach \x in {1,...,9}{
\node at (\x,0) {\tiny $\bullet$};
\node at (\x,-0.4) {\tiny $\x$};
\draw [dashed] (\x,0)--(\x,6.7);
};

\foreach \x/\y/\z in {2/1/1,1/2/2,3/2/3,2/3/4}{
\node at (\x,\y) {\tiny$\bullet$};
\node at (\x+0.1,\y+0.5) {\tiny$p_{\z}$};
};
\draw [thick](1,2)--(2,3)--(3,2)--(2,1)--cycle;

\foreach \x/\y/\z in {5/2/5,8/1/6,7/2/7,6/3/8,7/4/11,8/3/10,9/2/9
}{
\node at (\x,\y) {\tiny $\bullet$};
\node at (\x+0.1,\y+0.5) {\tiny$p_{\z}$};
};

\draw [thick] (5,2)--(7,4);
\draw [thick] (6,3)--(8,1);
\draw [thick] (7,4)--(9,2);
\draw [thick] (7,2)--(8,3);
\draw [thick] (8,1)--(9,2);

\draw (1,0)--(9,0);

\draw [thick](2,3)--(5,6)--(7,4);
\draw [thick](3,2)--(6,5);
\draw [thick](3,4)--(5,2);
\draw [thick](4,5)--(6,3);

\foreach \x/\y in {3/4,4/5,5/6,4/3,5/4,6/5}{
\node[red] at (\x,\y) {\Large$\bullet$};};

\node at (-2,3) {$H(w)=$};

\end{tikzpicture}
\end{figure}

\end{example}

\begin{example}\label{ex:e8} Consider the Lie type $E_8$, with 
$w=s_2\cdot s_4 s_5\cdot s_3 s_4 s_2 \cdot s_1 s_3 s_4 s_5 s_6 s_7 s_8$ and 
$v=s_2 s_4 s_5 s_6 s_7 s_8 $. The element $w$ is $\varpi_8$-minuscule.
The heap for the skew shape $wv^{-1}$ is illustrated in the figure below, together
with the set $\mathcal E_{H(w)}(H(v))$. By \Cref{lemma:eqmult},
$\mult_{w}(Y(v))=5$. 
\begin{figure}[h!]
\begin{tikzpicture}[x=1.0em,y=0.7em]
\node at (-1,3) {$H(w)=$};

\foreach \x in {1,...,7}{
\node at (\x,9-\x) {\tiny $\bullet$};};

\draw [thick](1,8)--(7,2);

\foreach \x/\y in {3/8,4/9,5/10,6/11,7/12,8/13}{
\node at (9-\x,\x+0.5) {\tiny $p_{\y}$};};

\foreach \x/\y in {4/1,5/2,6/1,5/0,4.5/3,4.5/-1}{
\node at (\x,\y) {\tiny $\bullet$};};

\foreach \x/\y/\z in {4.5/3/6,4.5/-1/1}{
\node at (\x,\y+0.7) {\tiny $p_{\z}$};};
\foreach \x/\y/\z in {5/2/5,5/0/2}{
\node at (\x,\y+0.5) {\tiny $p_{\z}$};};
\node at (7,2+0.5) {\tiny $p_7$};
\node at (6,1+0.5) {\tiny $p_4$};
\node at (4,1+0.5) {\tiny $p_3$};

\draw [thick](5,4)--(4.5,3)--(5,2);

\draw [thick](5,0)--(4.5,-1);

\draw [thick](4,1)--(5,2)--(6,1)--(5,0)--cycle;
\draw [thick](6,3)--(5,2);
\draw [thick](6,1)--(7,2);

\draw [thick](1,8)--(7,2);

\draw [thick](1,-2)--(7,-2);
\foreach \x in {1,...,7}{
\node at (\x,-2) {\tiny $\bullet$};};

\node at (4.5,-3) {\tiny $\bullet$};
\draw [thick](4.5,-3)--(5,-2);

\foreach \x in {1,...,7}{
\draw [dashed] (\x,-2)--(\x,9);
};
\foreach \x in {3,...,8}{
\node at (9-\x,-2.5) {\tiny $\x$};};
\node at (7,-2.5) {\tiny $1$};
\node at (4.5,-3.5) {\tiny $2$};
\draw [dashed] (4.5,-3)--(4.5,4);

\end{tikzpicture}

\begin{tikzpicture}[x=1.0em,y=0.7em]

\foreach \x in {1,...,7}{
\node at (\x,9-\x) {\tiny $\bullet$};};

\draw [thick](1,8)--(7,2);

\foreach \x/\y in {3/8,4/9,5/10,6/11,7/12,8/13}{
\node at (9-\x,\x+0.5) {\tiny $p_{\y}$};};

\foreach \x/\y in {4/1,5/2,6/1,5/0,4.5/3,4.5/-1}{
\node at (\x,\y) {\tiny $\bullet$};};

\foreach \x/\y/\z in {4.5/3/6,4.5/-1/1}{
\node at (\x,\y+0.7) {\tiny $p_{\z}$};};
\foreach \x/\y/\z in {5/2/5,5/0/2}{
\node at (\x,\y+0.5) {\tiny $p_{\z}$};};
\node at (7,2+0.5) {\tiny $p_7$};
\node at (6,1+0.5) {\tiny $p_4$};
\node at (4,1+0.5) {\tiny $p_3$};

\node[red] at (4,8) {$D_1$};

\draw [thick](5,4)--(4.5,3)--(5,2);
\draw [thick](5,0)--(4.5,-1);

\draw [thick](4,1)--(5,2)--(6,1)--(5,0)--cycle;
\draw [thick](6,3)--(5,2);
\draw [thick](6,1)--(7,2);

\draw [thick](1,8)--(7,2);

\draw [thick](1,-2)--(7,-2);
\foreach \x in {1,...,7}{
\node at (\x,-2) {\tiny $\bullet$};};

\node at (4.5,-3) {\tiny $\bullet$};
\draw [thick](4.5,-3)--(5,-2);

\foreach \x in {1,...,7}{
\draw [dashed] (\x,-2)--(\x,9);
};
\foreach \x in {3,...,8}{
\node at (9-\x,-2.5) {\tiny $\x$};};
\node at (7,-2.5) {\tiny $1$};
\node at (4.5,-3.5) {\tiny $2$};
\draw [dashed] (4.5,-3)--(4.5,4);

\foreach \x in {4,...,8}{
\node[red] at (9-\x,\x) {\large$\bullet$};};
\node[red] at (4.5,3) {\large$\bullet$};

\end{tikzpicture}
\begin{tikzpicture}[x=1.0em,y=0.7em]

\foreach \x in {1,...,7}{
\node at (\x,9-\x) {\tiny $\bullet$};};

\draw [thick](1,8)--(7,2);

\foreach \x/\y in {3/8,4/9,5/10,6/11,7/12,8/13}{
\node at (9-\x,\x+0.5) {\tiny $p_{\y}$};};

\foreach \x/\y in {4/1,5/2,6/1,5/0,4.5/3,4.5/-1}{
\node at (\x,\y) {\tiny $\bullet$};};

\foreach \x/\y/\z in {4.5/3/6,4.5/-1/1}{
\node at (\x,\y+0.7) {\tiny $p_{\z}$};};
\foreach \x/\y/\z in {5/2/5,5/0/2}{
\node at (\x,\y+0.5) {\tiny $p_{\z}$};};
\node at (7,2+0.5) {\tiny $p_7$};
\node at (6,1+0.5) {\tiny $p_4$};
\node at (4,1+0.5) {\tiny $p_3$};

\node[red] at (4,8) {$D_2$};

\draw [thick](5,4)--(4.5,3)--(5,2);

\draw [thick](5,0)--(4.5,-1);

\draw [thick](4,1)--(5,2)--(6,1)--(5,0)--cycle;
\draw [thick](6,3)--(5,2);
\draw [thick](6,1)--(7,2);

\draw [thick](1,8)--(7,2);

\draw [thick](1,-2)--(7,-2);
\foreach \x in {1,...,7}{
\node at (\x,-2) {\tiny $\bullet$};};

\node at (4.5,-3) {\tiny $\bullet$};
\draw [thick](4.5,-3)--(5,-2);

\foreach \x in {1,...,7}{
\draw [dashed] (\x,-2)--(\x,9);
};
\foreach \x in {3,...,8}{
\node at (9-\x,-2.5) {\tiny $\x$};};
\node at (7,-2.5) {\tiny $1$};
\node at (4.5,-3.5) {\tiny $2$};
\draw [dashed] (4.5,-3)--(4.5,4);

\foreach \x in {4,...,8}{
\node[red] at (9-\x,\x) {\large$\bullet$};};
\node[red] at (4.5,-1) {\large$\bullet$};

\end{tikzpicture}
\begin{tikzpicture}[x=1.0em,y=0.7em]

\foreach \x in {1,...,7}{
\node at (\x,9-\x) {\tiny $\bullet$};};

\draw [thick](1,8)--(7,2);
\foreach \x/\y in {3/8,4/9,5/10,6/11,7/12,8/13}{
\node at (9-\x,\x+0.5) {\tiny $p_{\y}$};};

\foreach \x/\y in {4/1,5/2,6/1,5/0,4.5/3,4.5/-1}{
\node at (\x,\y) {\tiny $\bullet$};};

\foreach \x/\y/\z in {4.5/3/6,4.5/-1/1}{
\node at (\x,\y+0.7) {\tiny $p_{\z}$};};
\foreach \x/\y/\z in {5/2/5,5/0/2}{
\node at (\x,\y+0.5) {\tiny $p_{\z}$};};
\node at (7,2+0.5) {\tiny $p_7$};
\node at (6,1+0.5) {\tiny $p_4$};
\node at (4,1+0.5) {\tiny $p_3$};

\node[red] at (4,8) {$D_3$};

\draw [thick](5,4)--(4.5,3)--(5,2);

\draw [thick](5,0)--(4.5,-1);

\draw [thick](4,1)--(5,2)--(6,1)--(5,0)--cycle;
\draw [thick](6,3)--(5,2);
\draw [thick](6,1)--(7,2);

\draw [thick](1,8)--(7,2);

\draw [thick](1,-2)--(7,-2);
\foreach \x in {1,...,7}{
\node at (\x,-2) {\tiny $\bullet$};};

\node at (4.5,-3) {\tiny $\bullet$};
\draw [thick](4.5,-3)--(5,-2);

\foreach \x in {1,...,7}{
\draw [dashed] (\x,-2)--(\x,9);
};
\foreach \x in {3,...,8}{
\node at (9-\x,-2.5) {\tiny $\x$};};
\node at (7,-2.5) {\tiny $1$};
\node at (4.5,-3.5) {\tiny $2$};
\draw [dashed] (4.5,-3)--(4.5,4);

\foreach \x in {5,...,8}{
\node[red] at (9-\x,\x) {\large$\bullet$};};
\node[red] at (4.5,-1) {\large$\bullet$};
\node[red] at (5,2) {\large$\bullet$};

\end{tikzpicture}
\begin{tikzpicture}[x=1.0em,y=0.7em]

\foreach \x in {1,...,7}{
\node at (\x,9-\x) {\tiny $\bullet$};};

\draw [thick](1,8)--(7,2);
\foreach \x/\y in {3/8,4/9,5/10,6/11,7/12,8/13}{
\node at (9-\x,\x+0.5) {\tiny $p_{\y}$};};

\foreach \x/\y in {4/1,5/2,6/1,5/0,4.5/3,4.5/-1}{
\node at (\x,\y) {\tiny $\bullet$};};

\foreach \x/\y/\z in {4.5/3/6,4.5/-1/1}{
\node at (\x,\y+0.7) {\tiny $p_{\z}$};};
\foreach \x/\y/\z in {5/2/5,5/0/2}{
\node at (\x,\y+0.5) {\tiny $p_{\z}$};};
\node at (7,2+0.5) {\tiny $p_7$};
\node at (6,1+0.5) {\tiny $p_4$};
\node at (4,1+0.5) {\tiny $p_3$};

\node[red] at (4,8) {$D_4$};


\draw [thick](5,4)--(4.5,3)--(5,2);

\draw [thick](5,0)--(4.5,-1);

\draw [thick](4,1)--(5,2)--(6,1)--(5,0)--cycle;
\draw [thick](6,3)--(5,2);
\draw [thick](6,1)--(7,2);

\draw [thick](1,8)--(7,2);

\draw [thick](1,-2)--(7,-2);
\foreach \x in {1,...,7}{
\node at (\x,-2) {\tiny $\bullet$};};

\node at (4.5,-3) {\tiny $\bullet$};
\draw [thick](4.5,-3)--(5,-2);

\foreach \x in {1,...,7}{
\draw [dashed] (\x,-2)--(\x,9);
};
\foreach \x in {3,...,8}{
\node at (9-\x,-2.5) {\tiny $\x$};};
\node at (7,-2.5) {\tiny $1$};
\node at (4.5,-3.5) {\tiny $2$};
\draw [dashed] (4.5,-3)--(4.5,4);

\foreach \x in {5,...,8}{
\node[red] at (9-\x,\x) {\large$\bullet$};};
\node[red] at (4.5,-1) {\large$\bullet$};
\node[red] at (5,0) {\large$\bullet$};

\end{tikzpicture}
\begin{tikzpicture}[x=1.0em,y=0.7em]

\foreach \x in {1,...,7}{
\node at (\x,9-\x) {\tiny $\bullet$};};

\draw [thick](1,8)--(7,2);
\foreach \x/\y in {3/8,4/9,5/10,6/11,7/12,8/13}{
\node at (9-\x,\x+0.5) {\tiny $p_{\y}$};};

\foreach \x/\y in {4/1,5/2,6/1,5/0,4.5/3,4.5/-1}{
\node at (\x,\y) {\tiny $\bullet$};};

\foreach \x/\y/\z in {4.5/3/6,4.5/-1/1}{
\node at (\x,\y+0.7) {\tiny $p_{\z}$};};
\foreach \x/\y/\z in {5/2/5,5/0/2}{
\node at (\x,\y+0.5) {\tiny $p_{\z}$};};
\node at (7,2+0.5) {\tiny $p_7$};
\node at (6,1+0.5) {\tiny $p_4$};
\node at (4,1+0.5) {\tiny $p_3$};

\node[red] at (4,8) {$D_5$};


\draw [thick](5,4)--(4.5,3)--(5,2);

\draw [thick](5,0)--(4.5,-1);

\draw [thick](4,1)--(5,2)--(6,1)--(5,0)--cycle;
\draw [thick](6,3)--(5,2);
\draw [thick](6,1)--(7,2);

\draw [thick](1,8)--(7,2);

\draw [thick](1,-2)--(7,-2);
\foreach \x in {1,...,7}{
\node at (\x,-2) {\tiny $\bullet$};};

\node at (4.5,-3) {\tiny $\bullet$};
\draw [thick](4.5,-3)--(5,-2);

\foreach \x in {1,...,7}{
\draw [dashed] (\x,-2)--(\x,9);
};
\foreach \x in {3,...,8}{
\node at (9-\x,-2.5) {\tiny $\x$};};
\node at (7,-2.5) {\tiny $1$};
\node at (4.5,-3.5) {\tiny $2$};
\draw [dashed] (4.5,-3)--(4.5,4);

\foreach \x in {6,...,8}{
\node[red] at (9-\x,\x) {\large$\bullet$};};
\node[red] at (4.5,-1) {\large$\bullet$};
\node[red] at (5,0) {\large$\bullet$};
\node[red] at (4,1) {\large$\bullet$};

\end{tikzpicture}

$\mathcal E_{H(w)}(H(v))
=\{
{\color{red}
D_1\vartriangleright  D_2\vartriangleright  D_3
\vartriangleright  D_4\vartriangleright  D_5}
\}$
\label{ex:e8eyd}
\end{figure}
\end{example}

\bibliographystyle{halpha}
\bibliography{hook.bib}

\end{document}